\pgfplotsset{compat=1.15}
\newtheorem{Definition}{Definition}
\newtheorem{Algorithm}{Algorithm}
\newtheorem{Theorem}{Theorem}
\journal{Applied Numerical Mathematics}
\begin{document}

\begin{frontmatter}


 \title{Implicit and semi-implicit well-balanced finite-volume methods for systems of balance laws}
\author{I. G\'omez-Bueno\corref{cor1}\fnref{label2}}
\ead{igomezbueno@uma.es}
\cortext[cor1]{Corresponding author.}
\address[label2]{Departamento de An\'alisis Matem\'atico, Estad\'istica e I.O. y Matem\'atica Aplicada. Facultad de Ciencias, Campus de Teatinos. Universidad de M\'alaga, 29071 M\'alaga, Spain}

\author{S. Boscarino \fnref{label3}}

\address[label3]{Dipartimento di Matematica ed Informatica. Viale Andrea Doria, 6, 95125. University of Catania. Catania, Italy. }

\author{M.J. Castro\fnref{label2}}

\author{C. Par\'es\fnref{label2}}

\author{G. Russo\fnref{label3}}

\begin{abstract}
The aim of this work is to design implicit and semi-implicit high-order well-balanced finite-volume numerical methods for 1D systems of balance laws.
The strategy introduced by two of the authors in \cite{CastroPares2019} for explicit schemes based on the application of a well-balanced reconstruction operator has been applied.
The well-balanced property is preserved when quadrature formulas are used
to approximate the averages and the integral of the source term in the cells. Concerning the time
evolution, this technique is combined with a time discretization method of type RK-IMEX or RK-implicit (see \cite{pareschi2005implicit,boscarino2009class}). The methodology will
be applied to several systems of balance laws.

\end{abstract}

\begin{keyword}

Systems of balance laws \sep well-balanced methods \sep finite-volume methods \sep high-order
methods \sep reconstruction operators \sep implicit methods \sep semi-implicit methods \sep shallow water equations 



\end{keyword}

\end{frontmatter}


\section{Introduction}

Numerous physical systems are described by evolutionary partial differential equations which have the structure of hyperbolic systems of balance laws of the form

\begin{equation}\label{PDE_generalproblem}
    u_t+f(u)_x=S(u)H_x,
\end{equation}
where 
\begin{itemize}
    \item $u(x,t)$ takes values on an open convex set $\Omega \subset \mathbb{R}^N$;
    \item $f:\Omega \longrightarrow \mathbb{R}^N$ is the physical flux function;
    \item the source term is written in the form $S(u)H_x$, where $S:\Omega \longrightarrow \mathbb{R}^N$ and $H:\mathbb{R} \longrightarrow \mathbb{R}$ is a known function (possibly the identity function $H(x)=x$).
\end{itemize}

The system \eqref{PDE_generalproblem} has nontrivial stationary solutions  that satisfy the ODE system:
 \begin{equation}\label{ODE_stationarysolutions}
  f(u)_x = S(u)H_x,   
 \end{equation}
or 
$$
J(u)u_x = S(u)H_x,
$$
where $J(u)$ is the Jacobian of the flux function. 
A numerical method is said to be well-balanced if it preserves (in some sense) all or a representative set of steady solutions of \eqref{PDE_generalproblem}. The development of numerical schemes satisfying the well-balanced property is a big issue in the simulation of small perturbations of stationary solutions in many geophysical problems, such as the tsunami waves in the Ocean. Many authors have already deal with the design of well-balanced methods: see, for example, \cite{Audusse,Bouchut,bicaper,EFN-03,EFN-CRAS,Chandrashekar15,Chandrashekar17, Desveaux16,desveauxRipa2016,Gosse00,Gosse01,Gosse02,Kappeli14,LR96,LR97,LeVeque98, Lukacova,Noelle06,Noelle07,Pelanti,PerSi1,PerSi2,Russo,Tang04,touma2016,Xing06, Klingenberg19,Klingenberg20,Kurganov2016,Kurganov2018,Chertok2015,handbook,
 franck2016finite,grosheintz2020well,kappeli2016wellb} and their references. In earlier papers, some of the authors introduced a general procedure to build explicit high-order well-balanced numerical methods whose key was the design of high-order well-balanced reconstruction operators (see \cite{CastroPares2019,GomezCastroPares2020,gomez2021well,gomez2021collocation}). The aim of this work is to extend this general methodology to design implicit and semi-implicit numerical schemes with the well-balanced property by means of well-balanced reconstructions operators. To our knowledge, no general framework to design this type of schemes has been developed so far. In the case of low-order schemes, some works can be found for finite-volume, finite-difference, finite-element and discontinuous Galerkin methods which only work for particular steady states (mainly zero velocity steady states): see, for instance, \cite{ChalonsKokh2020,Frank2020,Zhao2014,CASULLI199056,Vater2018ASM,BONAVENTURA2018209,Lee2019}. See \cite{CCM18} for well-balanced methods for all the one-dimensional steady states for the shallow-water system. 
Concerning high-order schemes, finite-difference, discontinuous Galerkin methods and combination of finite-volume/finite-element and finite-volume/finite-difference methods that are well-balanced for particular stationary solutions (mainly zero velocity steady states) are presented, for example, in \cite{busto2022,Huang2021HighOW,Tavelli2014,DUMBSER20138057,Giraldo2010}.
 
 In the context of semi-implicit numerical schemes, RK-IMEX setting  represents a powerful tool for the time discretization of system of the form (\ref{PDE_generalproblem}) if such it contains both stiff and non stiff terms. 
 Typical examples are hyperbolic systems with stiff hyperbolic or parabolic relaxation characterized by a relaxation parameter $\varepsilon$, \cite{pareschi2005implicit, boscarino2009class,boscarino2013flux, boscarino2013implicit, boscarino2014high}.
 
 In the hyperbolic to hyperbolic relaxation (HHR) a natural treatment consists in adopting RK-IMEX schemes, in which the relaxation term is treated by an implicit scheme, while the hyperbolic part is treated explicitly \cite{pareschi2005implicit, boscarino2009class}. On the other hand, in hyperbolic systems with parabolic relaxation (HPR), standard RK-IMEX schemes developed for HHR systems are not appropriate, because the characteristic speeds of the hyperbolic term diverge as the relaxation parameter vanishes. Slightly modified IMEX schemes become consistent discretization of the limit diffusive relaxed system, but suffer from the parabolic CFL restriction. In \cite{boscarino2013flux, boscarino2013implicit, boscarino2014high} this drawback has been overcome by a penalization method, so that the limit scheme becomes an implicit scheme for the limit diffusive relaxed system.
 
 Furthermore, in \cite{boscarino2017unified} a unified RK-IMEX approach has been introduced for systems which may admit both hyperbolic and parabolic relaxation (for example with space dependent relaxation parameter). Thus, this latter approach applies to both HHR and HPR. 
 All these approaches are capable to capture the correct asymptotic limit of the system when $ \varepsilon \to 0$, i.e., the schemes are asymptotic preserving (AP) independently of the scaling used.

The organization of the article is as follows: in Section 2 we present an overview to build both exactly well-balanced and well-balanced reconstruction operators and its application to design explicit high-order numerical schemes satisfying this property. Section 3 is devoted to obtain well-balanced implicit methods. We introduce a general procedure to design well-balanced reconstruction operators adapted to implicit methods and a general result is stated showing that well-balanced reconstruction operators lead to well-balanced methods.  Section 4 is focused on how the time discretization is performed, including the particular case of implicit first- and second-order well-balanced schemes.
Section 4 ends with the semi-implicit case. In Section 5, numerous numerical tests are performed to check well-balanced property of the implicit and semi-implicit numerical methods. Even though the introduced strategy can be used to design arbitrary high-order well-balanced methods, only first- and second-order methods have been implemented. We consider numerical tests for scalar and systems of balance laws: the transport equation, the Burgers equation with a non-linear source term and the shallow water model with and without Manning friction. Eventually, some conclusions are drawn in Section 6 and possible forthcoming works are also discussed.
 
\section{Preliminaries}
%

As discussed earlier, in previous works, two of the authors introduced a general procedure to design explicit methods satisfying these properties based on the use of reconstructions operators. This general strategy involves nonlinear problems to be solve at every computational cell and time step consisting in finding a stationary solution of \eqref{PDE_generalproblem} with given average in the cell. If the expression of the stationary solutions if available, exactly well-balanced methods can be designed. If it is not the case, one can obtain methods that are well-balanced.

\subsection{Exactly well-balanced methods}
Remember that, given the set $\{ \overline{u}_i \}$ of cell-averages of a function $u$
$$
\overline{u}_i = \frac{1}{\Delta x} \int_{x_{i-1/2}}^{x_{i+1/2}} u(x) \,dx,
$$
or their approximations using  a quadrature formula
$$
\overline{u}_i = \sum_{m=1}^M \alpha_m u(x^m_i),
$$
where $x_i^m$ and $\alpha_m$ are, respectively, the nodes and the weights of the quadrature formula, a reconstruction operator provides approximations of $u$ at the cells 
$$ u(x) \approx P_i(x; \{\bar u_j\}_{j \in \mathcal{S}_i} \}, \quad x \in [x_{i-1/2}, x_{i+1/2}],$$
where $x_{i \pm 1/2}$ represent the inter-cells.
Here for simplicity we assume that space discretization is uniform, so that the weights do not depend on $i$, and $x_i^m = x_{i-1/2}+c_m \Delta x$, $m=1,\ldots,M$, where $c_m$ denotes the nodes of the quadrature formula in the interval $[0,1]$. These approximations are obtained by interpolation or approximation techniques from the cell values at the cells belonging to the stencil of the $i$th cell:  $\mathcal{S}_i$ represents the set of their indexes. MUSCL, ENO, or CWENO are examples of high-order reconstruction operators. It will be assumed from now on that all the cell-averages are approximated using a fixed quadrature formula.

The design of explicit high-order well-balanced numerical methods  discussed in \cite{CastroPares2019} is based on the use of reconstruction operators that are well-balanced according to the following definition: 

\begin{Definition}
Given a stationary solution $u^e$ of \eqref{PDE_generalproblem}, a reconstruction operator $P_i(x)$ is said to be exactly well-balanced for $u^e$ if
$$
P_i(x; \{ \overline{u}^e_j \}_{j \in \mathcal{S}_i })=u^e(x), \quad  \forall x \in [x_{i-\frac{1}{2}}, x_{i+\frac{1}{2}}], \, \forall i.
$$
where $\overline{u}^e_j$ represent the exact cell-averages or the approximate cell-averages obtained by a quadrature formula from the stationary solution $u^e$.
\end{Definition}

Since, in general, a standard reconstruction operator is not expected to be  well-balanced, the following strategy introduced in \cite{sinum2008} is used to obtain a well-balanced operator $P_i$ from a standard one $Q_i$:
\begin{Algorithm}\label{alg:ewbnirec} 
Given a family of cell values $\{ \overline{u}_i \}$, at every cell $I_i=[x_{i-1/2}, x_{i+1/2}]$:

\begin{enumerate}

\item Find, if it is possible, a stationary solution $u_i^{e}(x)$ of \eqref{PDE_generalproblem} defined in the stencil of cell $I_i$  such that: 
$$
\sum_{m=1}^M  \alpha_m  u_i^{e} (x^m_i)  =  \overline{u}_i.
$$

\item  Apply the reconstruction operator to the cell values $\{v_j \}_{j \in \mathcal{S}_i}$ given by
$$
v_j = \overline{u}_j - \sum_{m=1}^M  \alpha_m  u_i^{e} (x^m_j)
$$
to obtain
$$
Q_i(x) = Q_i(x; \{v_j^n \}_{j \in \mathcal{S}_i }).
$$

\item Define
\begin{equation*} \label{step3}
P_i(x)=u_i^{e}(x)+Q_i(x).
\end{equation*}

\end{enumerate}
\end{Algorithm}

It can be easily proved that the reconstruction operator $P_i$ is exactly well-balanced provided that $Q_i$ is exact for the zero function.
Notice that, at every cell, a nonlinear problem has to be solved in the first step consisting in finding a stationary solution in the stencil of the cell with prescribed average in the cell. Therefore, the expression of the stationary solutions either in explicit or implicit form is required. 

Once the exactly well-balanced reconstruction operator has been built, the semi-discrete numerical methods writes as follows:

    \begin{equation}\label{EWBmeth}
\begin{split}     \frac{d u_i}{dt} &= -\frac{1}{ \Delta x} \left( F_{i+1/2}(t) - F_{i-1/2}(t) \right) +
\frac{1}{\Delta x} \left( f\left(u^{e,t}_i(x_{i+1/2}))\right)- f\left(u^{e,t}_i(x_{i-1/2}))\right) \right)\\
&  +  \sum_{m=1}^{M}\alpha_m \left( S(P_i^{t}(x_i^m))-S(u^{e,t}_i(x_i^m) \right)H_x(x_i^m), \,\forall i,
\end{split}
    \end{equation}
where
\begin{itemize} 

\item $\displaystyle P_i^t$ is the well-balanced reconstruction obtained from the cell values $\{u_i(t) \}$;

\item $\displaystyle u_i^{e,t}$ is the stationary solution found at the first step of the reconstruction procedure at the $i$th cell; 

\item  $\displaystyle F_{i+1/2}(t) = \mathbb{F}(u_{i+1/2}^{t, -}, u_{i+1/2}^{t, +}),$
where $\mathbb{F}$ is any consistent  numerical flux and
$$u_{i+1/2}^{t, -} = P_i^t(x_{i+1/2}), \quad u_{i+1/2}^{t, +} = P_{i+1}^t(x_{i+1/2}).$$

\end{itemize}

Notice that in order to use a lighter notation we write $u_i(t)$ in place of $\overline{u}_i(t)$ to denote a semidiscrete approximation of cell average of the solution, $$\frac{1}{\Delta x}\int_{x_{i-1/2}}^{x_{1+1/2}}u(x,t)\,dx.$$

This numerical method is well-balanced in the sense that, given any stationary solution $u^e$, the set of cell values $\{ \overline{u}^e_i \}$ is an equilibrium of the ODE system \eqref{EWBmeth}: see \cite{CastroPares2019}.

\subsection{Well-balanced methods}
When the expression of the stationary solutions in explicit or implicit form is not available, reconstruction operators that are just well-balanced but not exactly well-balanced can be designed following the idea developed in \cite{GomezCastroPares2020,gomez2021well,gomez2021collocation}: consider a numerical solver of the ODE system \eqref{ODE_stationarysolutions} that provides approximations of a stationary solution $u^e$ at the inter-cells and the quadrature points
$$
u^e_{i + 1/2} \approx u^e(x_{i+1/2}), \quad u^{e}_{i,m} \approx u^e(x^m_i), \ m= 1, \dots, M, \quad \forall i.
$$
Well-balanced reconstruction operators are then defined as follows:

\begin{Definition}
Given a stationary solution $u^e$ of \eqref{PDE_generalproblem}, a reconstruction operator $P_i(x)$ is said to be well-balanced for $u^e$ if, for every $i$:
\begin{eqnarray*}
& & P_i(x_{i \pm 1/2}; \{ \overline{u}^e_j \}_{j \in \mathcal{S}_i })=u^e_{i\pm 1/2},\\ & &  P_i(x_i^m; \{ \overline{u}^e_j \}_{j \in \mathcal{S}_i })=u^{e}_{i,m}, \ m = 1, \dots, M,
\end{eqnarray*}
where here the cell-averages $ \{ \overline{u}^e_i\}$ are given by
\begin{equation*}
    \overline{u}^e_i=\sum_{m=1}^M \alpha_m u_{i,m}^e.
\end{equation*}
\end{Definition}

Algorithm \ref{alg:ewbnirec} is modified as follows to obtain well-balanced reconstruction operators:
\begin{Algorithm}\label{alg:wbnirec} 
Given a family of cell values $\{ \overline{u}_i \}$, at every cell $I_i=[x_{i-1/2}, x_{i+1/2}]$:

\begin{enumerate}

\item Find, if it is possible, a stationary solution $u_i^{e}(x)$ of \eqref{PDE_generalproblem} defined in the stencil of cell $I_i$  such that: 
$$
\sum_{m=1}^M  \alpha_m  u_{i;i,m}^{e} =  \overline{u}_i,
$$
where
$$
 u_{i;i,m}^{e}  \approx u^e_i(x^m_i), \ m= 1, \dots, M,
$$
are the approximations provided by the selected numerical solver for \eqref{ODE_stationarysolutions} .

\item  Apply the reconstruction operator to the cell values $\{v_j \}_{j \in \mathcal{S}_i}$ given by
$$
v_j = \overline{u}_j - \sum_{m=1}^M  \alpha_m  u_{i;j,m}^{e}
$$
to obtain
$$
Q_i(x) = Q_i(x; \{v_j^n \}_{j \in \mathcal{S}_i }),
$$
where
$$
 u_{i;j,m}^{e}  \approx u^e_i(x^m_j), \ m= 1, \dots, M, \quad j \in \mathcal{S}_i.
$$

\item Define
\begin{eqnarray*} 
u_{i-1/2}^+ & = & u_{i;i-1/2}^e + Q_i(x_{i-1/2}),\\
u_{i+1/2}^- & = & u_{i;i+1/2}^e + Q_i(x_{i+1/2}), \\
P_i(x^m_i) & = & u_{i;i,m}^e +Q_i(x_i^m),  \ m=1, \dots,M,
\end{eqnarray*}
where
$$
 u_{i; i \pm 1/2}^{e}  \approx u^e_i(x_{i\pm 1/2}).
$$

\end{enumerate}
\end{Algorithm}

Observe that, in order to implement \eqref{EWBmeth} it is enough to compute the reconstructions at the inter-cells and at the quadrature points. 

The first step of the reconstruction procedure consists now of applying a numerical solver to the ODE system \eqref{ODE_stationarysolutions}  to find a solution with prescribed average at a cell. Two different strategies have been developed to solve these problems:
\begin{itemize}
    \item A control-based approach (see \cite{GomezCastroPares2020,gomez2021well}). The nonlinear problems to be solved in the reconstruction procedure are interpreted as control problems, in which the control is the value of the solution at the left extreme point of the stencil. The gradient of the functional is computed on the basis of the adjoint problem. Different gradient-type methods and the Newton's method are applied to solve the control problems. 
  
    \item A technique based on RK collocation methods (see \cite{gomez2021collocation}). RK collocation methods are used to solve \eqref{ODE_stationarysolutions} with prescribed average in each cell $I_i$ and to extend the found stationary solution to the cells belonging to stencil $\mathcal{S}_i$.
\end{itemize}

Once the well-balanced reconstruction operator has been built, the semi-discrete numerical method writes as follows:

    \begin{equation}\label{WBmeth}
\begin{split}     \frac{d u_i}{dt} &= -\frac{1}{ \Delta x} \left( F_{i+1/2}(t) - F_{i-1/2}(t) \right) +
\frac{1}{\Delta x} \left( f\left(u^{e,t}_{i;i+1/2})\right)- f\left(u^{e,t}_{i;i-1/2})\right) \right)\\
&  +  \sum_{m=1}^{M}\alpha_m \left( S(P_i^{t}(x_i^m))-S(u_{i;i,m}^{e,t}) \right)H_x(x_i^m), \,\forall i,
\end{split}
    \end{equation}
where now  $\displaystyle u^{e,t}_{i;i \pm 1/2}$, $\displaystyle u^{e,t}_{i;i,m}$ are the approximations at the inter-cells $x_{i\pm 1/2}$ and the quadrature points $x_i^m$ of the stationary solution $u_i^{e,t}$ found at the first step of the reconstruction procedure at the $i$th cell. 

This numerical method is well-balanced in the sense that, given any stationary solution $u^e$, the set of cell values $\{ {u}^e_i \}$ given by
$$
u^e_i = \sum_{m = 1}^M \alpha_m u^{e}_{i,m}
$$
is an equilibrium of the ODE system \eqref{WBmeth}: see \cite{gomez2021collocation}.

Observe that \eqref{EWBmeth} is the particular case of \eqref{WBmeth} corresponding to the exact ODE solver
for system \eqref{ODE_stationarysolutions}, i.e.
\begin{eqnarray*}
& & u^{e,t}_{i; i \pm 1/2} = u^{e,t}_i(x_{i + 1/2}), \\
& & u^{e,t}_{i; j,m} = u^{e,t}(x_j^m), \quad m = 1, \dots, M.
\end{eqnarray*}
Therefore, only the expression of the method \eqref{WBmeth} will be considered without loss of generality. 

\subsection{Explicit methods}
Explicit high-order well-balanced numerical methods are then obtained by applying an ODE solver (usually a TVD RK method) to the ODE systems \eqref{EWBmeth} or \eqref{WBmeth}.

\section{Implicit methods}
Although in principle implicit high-order well-balanced methods can be obtained by applying implicit ODE solvers to \eqref{EWBmeth} or \eqref{WBmeth}, in practice the well-balanced reconstruction of the unknown solution $u^{n+1}$ would lead to complex nonlinear systems that may be costly to solve. To avoid this we look for a solution of the ODE system in $[t^n,t^{n+1}]$ of the form $u_i(t) = u^n_i + u_i^f(t)$, and adopt  besides the standard reconstruction operator  ${Q}$, a new reconstruction $\tilde{Q}$, which will act on the perturbation $u_i^f$ as described below. Once the approximations at time $t^n$, $\{ u_i^n \}$, have been computed, in order to update them we proceed as follows: 
\begin{itemize}
    \item First, the well-balanced reconstruction procedure is applied to $\{u_i^n\}$ to obtain:
    $$P_i^n(x) = u^{e,n}_i(x) + {Q}_i(x; \{v_j^n\}_{j \in \mathcal{S}_i}),$$
where   $u^{e,n}_i(x)$ is the stationary solution found at the first step of the reconstruction procedure at the $i$th cell. 
\item Next we consider the following ODE system in the time interval $[t^n, t^{n+1}]$:
\begin{equation}\label{CP_timefluc}
    \begin{split}     \frac{d u_i^f}{dt} &= -\frac{1}{ \Delta x} \left( F_{i+1/2}(t) - F_{i-1/2}(t) \right) +
    \frac{1}{\Delta x} \left( f\left(u^{e,n}_{i;i+1/2}\right)- f\left(u^{e,n}_{i;i-1/2}\right) \right)\\
&  +  \sum_{m=1}^{M}\alpha_m \left( S(P_i^{t}(x_i^m))-S(u_{i;i,m}^{e,n}) \right)H_x(x_i^m), \,\forall i,
    \end{split}
\end{equation}
with initial condition
$$
    u_i^f(t^n) = 0, \quad \forall i.
$$
Here 
\begin{equation}
P_i^{t} (x) = P_i^n(x) + \widetilde{Q}_i (x; \{ u^f_j \}_{j \in \widetilde{\mathcal{S}}_i}) ,   
\label{eq:P}
\end{equation}
and
\begin{equation}
 F_{i+1/2}(t) = \mathbb{F}(u_{i+1/2}^{t, -}, u_{i+1/2}^{t, +}),
\label{eq:F}
\end{equation}
where 
\begin{equation}
u_{i+1/2}^{t, -} = P_i^t(x_{i+1/2}), \quad u_{i+1/2}^{t, +} = P_{i+1}^t(x_{i+1/2}).
\label{eq:u}
\end{equation}

\item Define:
\begin{equation}\label{numericalmethod}
    u_i^{n+1}=u_i^n+u_i^f(t^{n+1}).
\end{equation}

\end{itemize}

Observe that, although
$$
u_i(t) = u_i^n + u_i^f(t), \quad t \in [t^n, t^{n+1}]
$$
formally solves \eqref{WBmeth}, the reconstruction $P_i^t$ is not the same as the one in the previous section: while there one had
$$P_i^{t} (x) = u_i^{e,t} (x)  + {Q}_i (x; \{ v_j \}_{j \in \widetilde{\mathcal{S}}_i}),$$
now,
$$
P_i^t(x) =  u^{e,n}_i(x) + {Q}_i(x; \{v_j^n\}_{j \in \mathcal{S}_i}) + \widetilde{Q}_i (x; \{ u^f_j(t) \}_{j \in \widetilde{\mathcal{S}}_i}).
$$
The main differences are the following:
\begin{itemize}
    \item to compute $P_i^t$ the stationary solution $u^{e,n}_i$ is used instead of $u^{e,t}_i$;
    \item the reconstruction operator $\widetilde{Q}_i$ will be in practice easier and cheaper to compute than $Q_i$: in particular, the smoothness indicators obtained to compute $Q_i$ at time $t^n$ may be used to compute $\widetilde Q_i$. We shall require that $\tilde{Q}_i$ maintains null states and its order of accuracy is $p$.
\end{itemize}
The description of the fully discrete schemes will be completed in the next section, by specifying how to solve the ODE system (\ref{CP_timefluc}-\ref{numericalmethod}).

Some properties of the numerical schemes do not depend on the detail of the particular scheme that is adopted for the numerical solution of system (\ref{CP_timefluc}-\ref{numericalmethod}), therefore we shall discuss them in this section.

\subsection{Well-balanced property}
Here we state two results concerning the well-balanced properties of the schemes described at the beginning of the section.
\begin{Theorem}
Given a stationary solution $u^e$ of \eqref{PDE_generalproblem}, let us suppose that, at every cell, at every time step, $P_i^n$ is a well-balanced reconstruction operator.
Then, the numerical method \eqref{numericalmethod} is well-balanced for $u^e$, in the sense that, if the initial condition is given by
$$
u^0_i = \sum_{m = 1}^M \alpha_m u^{e}_{i,m},
$$
then
$$
u^n_i = u^0_i
$$
for every $n$ and every $i$.
\end{Theorem}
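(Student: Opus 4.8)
The plan is to prove this by induction on $n$, showing that if the cell values coincide with the equilibrium averages at time $t^n$, then the time-fluctuation $u_i^f(t)$ vanishes identically on $[t^n,t^{n+1}]$, so that the update \eqref{numericalmethod} leaves the values unchanged. The base case $n=0$ is the hypothesis on the initial condition. For the inductive step, I would assume $u_i^n = u_i^0 = \sum_{m=1}^M \alpha_m u^e_{i,m}$ for all $i$ and analyze the Cauchy problem \eqref{CP_timefluc} with initial datum $u_i^f(t^n)=0$.

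The key observation is that $u_i^f(t)\equiv 0$ is a solution of \eqref{CP_timefluc}. To see this, I would first examine the reconstruction produced at time $t^n$. Since the cell values $\{u_i^n\}$ are exactly the equilibrium averages for $u^e$, and $P_i^n$ is assumed to be a well-balanced reconstruction operator (in the sense of the second Definition), the well-balanced reconstruction procedure recovers the stationary solution: the stationary solution $u_i^{e,n}$ found in the first step satisfies $u^{e,n}_{i;i\pm 1/2}=u^e_{i\pm 1/2}$ and $u^{e,n}_{i;i,m}=u^e_{i,m}$, and by the well-balanced property $P_i^n$ reproduces these same equilibrium values at the inter-cells and quadrature points. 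Next, substituting $u_i^f\equiv 0$ into \eqref{eq:P} gives $P_i^t = P_i^n$ at every point where it is evaluated, because $\widetilde Q_i$ maintains null states, i.e. $\widetilde Q_i(x;\{0\}_{j\in\widetilde{\mathcal S}_i})=0$. Consequently the traces in \eqref{eq:u} are $u_{i+1/2}^{t,-}=u^e_{i+1/2}$ and $u_{i+1/2}^{t,+}=u^e_{i+1/2}$, so $F_{i+1/2}(t)=\mathbb F(u^e_{i+1/2},u^e_{i+1/2})=f(u^e_{i+1/2})$ by consistency of the numerical flux.

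With these identities in hand I would verify that the right-hand side of \eqref{CP_timefluc} vanishes. The first line becomes
\[
-\frac{1}{\Delta x}\bigl(f(u^e_{i+1/2})-f(u^e_{i-1/2})\bigr)+\frac{1}{\Delta x}\bigl(f(u^{e,n}_{i;i+1/2})-f(u^{e,n}_{i;i-1/2})\bigr)=0,
\]
since $u^{e,n}_{i;i\pm 1/2}=u^e_{i\pm 1/2}$. For the source term, $P_i^t(x_i^m)=u^e_{i,m}=u^{e,n}_{i;i,m}$, so each summand $S(P_i^t(x_i^m))-S(u^{e,n}_{i;i,m})$ is zero. Hence $du_i^f/dt=0$ is consistent with $u_i^f\equiv 0$, confirming that the zero function solves the Cauchy problem. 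Assuming uniqueness of the solution of \eqref{CP_timefluc} (which any reasonable time-discretization respects, and which holds for the exact ODE), the time-discretized scheme reproduces this zero solution, giving $u_i^f(t^{n+1})=0$ and therefore $u_i^{n+1}=u_i^n=u_i^0$.

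The main obstacle, and the point I would treat most carefully, is the interplay between the two reconstruction operators $Q_i$ and $\widetilde Q_i$ in \eqref{eq:P}: one must be sure that feeding the equilibrium averages through $P_i^n$ yields exactly the equilibrium traces (this is the well-balanced hypothesis on $P_i^n$) while the perturbation reconstruction $\widetilde Q_i$ contributes nothing when its inputs vanish (this is the null-state preservation property required of $\widetilde Q_i$ earlier in the section). A subtler point is that the conclusion should in principle hold at the fully discrete level, not merely for the exact ODE flow; I would note that because the proof only uses that $u_i^f\equiv 0$ is an exact solution together with the structural vanishing of the right-hand side, and because the well-balanced time-integrators adopted in the next section preserve steady states, the argument transfers to the discrete update without modification. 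The essential content is purely the cancellation identities above; no regularity or stability estimate is needed.
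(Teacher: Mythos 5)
Your proof is correct and follows exactly the route the paper takes: the paper dismisses the proof as straightforward, noting only that it suffices to check that $u_i^f(t)\equiv 0$ solves the Cauchy problem \eqref{CP_timefluc} at every cell and every time step, which is precisely the cancellation argument you carry out in detail (well-balancedness of $P_i^n$, null-state preservation of $\widetilde Q_i$, and consistency of the numerical flux). Your version simply makes explicit the induction and the identities that the paper leaves to the reader.
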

The proof is straightforward: it is enough to check that at every cell, at every time step, $u_i^f(t)\equiv 0$ is the solution of the Cauchy problem \eqref{CP_timefluc}.
As a corollary we have:
\begin{Theorem}
Given a stationary solution $u^e$ of \eqref{PDE_generalproblem}, let us suppose that, at every cell, at every time step, $P_i^n$ is an exactly well-balanced reconstruction operator.
Then, the numerical method \eqref{numericalmethod} is exactly well-balanced for $u^e$, in the sense that, if the initial condition is given by
$$
u^0_i = \sum_{m = 1}^M \alpha_m u^{e}(x_{i}^{m}),
$$
then
$$
u^n_i = u^0_i
$$
for every $n$ and every $i$.
\end{Theorem}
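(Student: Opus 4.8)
The plan is to treat this statement as a direct corollary of the preceding theorem, exploiting the observation already made in the text that the exactly well-balanced framework \eqref{EWBmeth} is precisely the specialization of the well-balanced framework \eqref{WBmeth} to the \emph{exact} ODE solver for \eqref{ODE_stationarysolutions}. Under that specialization one has $u^{e}_{i,m}=u^e(x_i^m)$, $u^{e}_{i;i\pm1/2}=u^e(x_{i\pm1/2})$ and $u^{e}_{i;i,m}=u^e(x_i^m)$, so that an exactly well-balanced reconstruction operator is in particular a well-balanced one in the sense of the second Definition, and the two prescribed initial data coincide: $\sum_{m=1}^M\alpha_m u^e(x_i^m)=\sum_{m=1}^M\alpha_m u^e_{i,m}$. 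Hence the hypotheses of the preceding theorem are met, and its conclusion $u_i^n=u_i^0$ for every $n$ and every $i$ transfers verbatim.

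For a self-contained argument I would instead verify directly, as in the proof of the preceding theorem, that $u_i^f(t)\equiv 0$ solves the Cauchy problem \eqref{CP_timefluc} when the initial data are the exact cell-averages $u_i^0=\sum_{m=1}^M \alpha_m u^e(x_i^m)$. First, because $P_i^n$ is exactly well-balanced and the data fed to the reconstruction are the averages of $u^e$, the reconstruction reproduces the equilibrium, $P_i^n(x)\equiv u^e(x)$, while the stationary solution detected at the first step of the reconstruction procedure (with the exact solver) satisfies $u^{e,n}_{i;i\pm1/2}=u^e(x_{i\pm1/2})$ and $u^{e,n}_{i;i,m}=u^e(x_i^m)$. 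Next, setting $u_i^f\equiv0$ and using that $\widetilde{Q}_i$ maintains null states, \eqref{eq:P} gives $P_i^t(x)=P_i^n(x)=u^e(x)$; in particular, by \eqref{eq:u}, the one-sided interface traces $u_{i+1/2}^{t,-}$ and $u_{i+1/2}^{t,+}$ both equal $u^e(x_{i+1/2})$.

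It then remains to check that the right-hand side of \eqref{CP_timefluc} vanishes. The two source contributions cancel because $P_i^t(x_i^m)=u^e(x_i^m)=u^{e,n}_{i;i,m}$, so that $S(P_i^t(x_i^m))-S(u^{e,n}_{i;i,m})=0$ for every $m$. For the flux part, consistency of $\mathbb{F}$ together with the single-valuedness of $u^e$ at the interfaces yields, via \eqref{eq:F}, $F_{i\pm1/2}(t)=\mathbb{F}\bigl(u^e(x_{i\pm1/2}),u^e(x_{i\pm1/2})\bigr)=f\bigl(u^e(x_{i\pm1/2})\bigr)=f\bigl(u^{e,n}_{i;i\pm1/2}\bigr)$, so the numerical-flux differences exactly cancel the physical-flux differences in \eqref{CP_timefluc}. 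Hence $du_i^f/dt=0$ with $u_i^f(t^n)=0$, so $u_i^f(t)\equiv0$ is the (unique) solution, and \eqref{numericalmethod} gives $u_i^{n+1}=u_i^n$; induction on $n$ closes the argument.

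I do not expect a genuine obstacle here: once the exact-solver identification is made, the result is immediate from the preceding theorem. The only point in the direct verification that deserves care is the cancellation at the interfaces, which hinges on the consistency of the numerical flux $\mathbb{F}$ and on $u^e$ being a single, globally defined equilibrium, so that the left and right traces produced at each $x_{i+1/2}$ by the neighbouring exactly well-balanced reconstructions coincide.
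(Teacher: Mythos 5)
Your proposal is correct and follows essentially the same route as the paper: the paper derives this statement as a corollary of the preceding theorem, whose proof consists precisely in checking that $u_i^f(t)\equiv 0$ solves the Cauchy problem \eqref{CP_timefluc}, which is exactly the verification you carry out (using that $\widetilde{Q}_i$ preserves null states and that the consistent numerical flux collapses to the physical flux at the single-valued equilibrium traces). Your additional self-contained expansion of the cancellation at the interfaces and of the source terms is a faithful elaboration of the argument the paper leaves implicit.
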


\section{Time discretization}
This section is devoted to time discretization. If 
 system (\ref{CP_timefluc}-\ref{numericalmethod}) is not {\em stiff\/}, i.e.\ if accuracy and stability restrictions on the time step $\Delta t$ are of the same order of magnitude, then one can adopt explicit schemes. This is often the case for hyperbolic systems of balance laws if one is interested in resolving all the waves of the system, i.e.\ if all the signals transported by the various waves are not negligible, and if the time scales associated to the right hand side are not too small. In such cases one can adopt explicit schemes such as explicit Runge-Kutta of multistep methods. In particular, strongly stability preserving schemes are generally adopted for the numerical solution of hyperbolic systems of balance laws, since they prevent formation of spurious oscillations due to time discretization (see \cite{gottlieb2001strong}). The literature on well-balanced schemes based on explicit schemes is too vast to mention it here. We just recall the following review papers \cite{CastroPares2019,Roe87,bermudez1994upwind,GNVC,LR96, LR97,LeVeque98,Gosse00,Gosse01,Gosse02,GALLICE2002713,berthon2016,BERTHON2018284,Desveaux16,desveauxRipa2016,Tang04,PerSi1,Audusse,Bouchut,M3AS07,subsonic,Kappeli14,M2AN_2001__35_4_631_0,Lukacova,Kurganov07,Chertock18,Chandrashekar15,CASELLES200916,CHERTOCK201836,weno,sinum2008, Noelle06,Noelle07,Russo,BERBERICH2021104858,Klingenberg19,Xing06,XING2006b,PARES2021109880,venas,CANESTRELLI2010291,Chandrashekar17,guerrero2021,ricchiuto_stabilized_2009,abgrall_high-order_2014,ricchiuto_explicit_2015}.
 For this reason in this paper we do not consider WB schemes based on explicit time discretization methods.
 The choice of the method adopted for time discretization depends on the problem we want to solve.
 We shall consider three different situations: 
 \begin{enumerate}
     \item only a source term or part of it is stiff while the hyperbolic term is non stiff;
     \item the source or part of it and some part of the hyperbolic term is stiff;
     \item both the hyperbolic term and the source are stiff and require implicit solver.
 \end{enumerate}
 Conceptually, the simplest case is the third one, which requires an implicit treatment of both source and the hyperbolic term, so this is the case we start with. 
 In such case one could adopt an implicit scheme for the treatment of \eqref{CP_timefluc}. 
 Most commonly used implicit Runge-Kutta schemes are the so called diagonally implicit schemes (DIRK), and in particular \emph{singly diagonally implicit}
 which are described by the following 
Butcher tableau
  \begin{equation}\label{DIRK}
\begin{array}{c|ccccc}
\gamma & \gamma  & 0      & 0 &\dots & 0 \\
c_2    &  a_{2,1}& \gamma &  0 & \dots & 0\\
c_3    & a_{3,1} & a_{3,2} & \gamma & \dots & 0 \\
\vdots & \vdots  & \vdots & \vdots & \ddots & \vdots \\
1   & a_{s,1} & a_{s,2} & a_{s,3}& \dots & \gamma \\\hline
   & a_{s,1} & a_{s,2} & a_{s,3}& \dots & \gamma \\
\end{array}
 \end{equation}
 will be applied to solve the Cauchy problems satisfied by the time fluctuations $u_i^f(t)$:
\begin{eqnarray*}
u_i^{f,k} & = &  \Delta t \sum_{l = 1}^{k-1} a_{k,l} L^l_i + \Delta t \gamma L_i^k, \quad k = 1, \dots, s,\\
u_i^{f,n+1} & = & u_i^{f,s},
\end{eqnarray*}
where
    \begin{equation}
    \begin{split}
         L_i^k&= -\frac{1}{ \Delta x} \left( F_{i+1/2}^{k} - F_{i-1/2}^{k} \right) +
\frac{1}{ \Delta x} \left( f\left(u^{e,n}_{i;i+1/2}\right)- f\left(u^{e,n}_{i;i-1/2}\right) \right)\\
& +
 \sum_{m=1}^{M}\alpha_m \left( S(P_i^{k}(x_i^m))-S(u^{e,n}_{i;i,m}) \right) H_x(x_i^m), \,k=1,\dots, s,
    \end{split}
\end{equation}
with obvious notation, that can be written as well in the form:
\begin{eqnarray*}
u_i^{f,k} &=& \sum_{l=1}^{k-1} \gamma_{k,l} u_i^{f,l} + \gamma \Delta t L_i^{k},  \quad k = 1, \dots, s, \\
u_i^{f,n+1} & = & u_i^{f,s},
\end{eqnarray*}
for some coefficients $\gamma_{k,l}$.
The choice of the particular DIRK scheme depends on the problem. If the systems we want to solve are very stiff, then it is adisable to adopt an $L$-stable scheme, which is the type of schemes we shall use in this paper. 
\subsection{First-order schemes}
The backward Euler method is used to discretize in time. 
The system for the time fluctuations writes thus as follows:
\begin{equation}\label{timefluc_O1}
\begin{split}
u_i^{f,n+1} & = -\frac{\Delta t}{ \Delta x} \left( F_{i+1/2}^{n+1} - F_{i-1/2}^{n+1} \right) +
\frac{1}{ \Delta x} \left( f\left(u^{e,n}_{i;i+1/2}\right)- f\left(u^{e,n}_{i;i-1/2}\right) \right)\\
& +\Delta t \left( S(P_i^{n+1}(x_i))-S(u^{e,n}_{i;i}) \right)H_x(x_i),  
\end{split}
\end{equation}
where the midpoint rule has been used to approximate the integrals. 
The reconstruction operators $Q_i$ and $\widetilde Q_i$ are the trivial piecewise constant ones so that:
$$
P_i^n(x) =  u^{e,n}_i(x), \quad P_i^{n+1} = u^{e,n}_i(x) + u_i^{f,n+1},
$$
and therefore the reconstructed states are defined as follows:
\begin{equation*}
    \begin{cases}
    \displaystyle u_{i+\frac{1}{2}}^{n+1, -} = u^{e,n}_{i;i+\frac{1}{2}}+u_i^{f,n+1}= u_{i+\frac{1}{2}}^{n,-}+u_i^{f,n+1},\\
    \displaystyle u_{i+\frac{1}{2}}^{n+1, +}=u^{e,n}_{i+1;i+\frac{1}{2}}+u_{i+1}^{f,n+1}=u_{i+\frac{1}{2}}^{n,+}+u_{i+1}^{f,n+1}.
    \end{cases}
\end{equation*}
Once System \eqref{timefluc_O1} has been solved, the cell-averages are updated as follows
\begin{equation*}
  u_i^{n+1}=u_i^n+u_i^{f,n+1}. 
\end{equation*}

Notice that, except for the case of a linear problem, the system to be solved to compute the time fluctuations $u_i^{f,n+1}$, \eqref{timefluc_O1}, is nonlinear: a numerical method such as a fixed-point algorithm or the Newton's method will be applied to solve them.

\subsection{Second-order schemes}
The second-order implicit Runge-Kutta method whose Butcher tableau is
  \begin{equation}\label{RKcol2}
\begin{array}{c|cc}
\gamma & \gamma & 0 \\
1 & 1-\gamma & \gamma \\\hline
& 1-\gamma & \gamma,\\
\end{array}
 \end{equation}
where $\gamma=1-\frac{1}{\sqrt{2}}$, will be used now for the time discretization.
Since the scheme is \emph{stiffly accurate}, i.e.\ $a_{s,i} = b_i,i=1,\ldots,s$, then the numerical solution coincides with the last stage value. 
Then, the fully discrete numerical method is as follows:
\begin{equation} \label{timefluc_O2}
\begin{split}
    u_i^{f,1}&= \Delta t\gamma L_i^1,\\
    u_i^{f,2}&= \Delta t (1-\gamma) L_i^1+ \Delta t \gamma L_i^2,\\
    u_i^{f,n+1}&=u_i^{f,2},
\end{split}
\end{equation}
where 
    \begin{equation}
    \begin{split}
         L_i^k&= -\frac{1}{ \Delta x} \left( F_{i+1/2}^{k} - F_{i-1/2}^{k} \right) +
\frac{1}{\Delta x} \left( f\left(u^{e,n}_{i;i+1/2}\right)- f\left(u^{e,n}_{i;i-1/2}\right) \right)\\
& +
\left( S(P_i^{k}(x_i))-S(u^{e,n}_{i;i}) \right)H_x(x_i), \,k=1,2.
    \end{split}
\end{equation}
Again, the midpoint rule is applied to approximate the integrals and $u^{e,n}_i(x)$ is the stationary solution such that $u_i^{e,n}(x_i)\approx u^{e,n}_{i;i}=u_i^n$.

Since
$$
L_i^1=\frac{u_i^{f,1}}{\Delta t \gamma},
$$
 \eqref{timefluc_O2} can be equivalently written as
\begin{equation} \label{metnum_O2_v2}
\begin{split}
    u_i^{f,1}&=\Delta t \gamma L_i^1,\\
    u_i^{f,n+1}&=\frac{(1-\gamma)}{\gamma}u_i^{f,1}+\Delta t \gamma L_i^{n+1},
\end{split}
\end{equation}
where the superindex $2$ has been replaced by $n + 1$, since $u_i^{f, n+1} = u_i^{f,2}$.

The well-balanced reconstruction operator  $P_i^n(x)$ is computed on the basis of a MUSCL-type reconstruction operator using three-point stencils
$$
\mathcal{S}_i = \{ i-1, i, i+1 \}
$$
as follows:
\begin{Algorithm} Given the approximations $\{ u_i^{n}\}$ at time $t^n$: 

\begin{enumerate}
\item   Find, if possible, a stationary solution $u^{e,n}_i(x)$ such that
$$
 u_{i;i}^{e,n} =  u_i^n,
$$
where
$$
 u_{i;i}^{e,n}  \approx u^e_i(x_i).
$$
\item Apply the second-order reconstruction operator $Q_i$ to $\{v_j^n\}_{j \in \mathcal{S}_i}$ given by
\begin{equation*}
v_j^n=  u_j^n -   u_{i;j}^{e,n} ,\quad j \in \mathcal{S}_i, 
\end{equation*}
where
$$
 u_{i;j}^{e,n}  \approx u^e_i(x_j), \,  \in \mathcal{S}_i,
$$
to obtain
\begin{equation*}
Q_i^n(x)=Q_i^n(x;\{v_j^n\}_{j \in \mathcal{S}_i})=\Delta_i v^n (x-x_i).
\end{equation*}
Here, $\Delta_i v^n$ is a slope limiter, such as the minmod limiter,
\begin{equation}
    \Delta_i v^n=\text{minmod}\left(\frac{v_{i+1}^n-v_i^n}{\Delta x}, \frac{v_{i}^n-v_{i-1}^n}{\Delta x}\right),
\end{equation}
where

\begin{equation}
\text{minmod}(a,b)= \left\{ \begin{array}{cl}

             \text{min}(a,b) &   if  \, a > 0, b > 0, \\ \text{max}(a,b) &   if  \, a < 0, b <0, \\
             0 & otherwise,

             \end{array}
             \right.
\end{equation}
or the avg limiter:
\begin{equation}
    \Delta_i v^n=\text{avg}\left(\frac{v_{i+1}^n-v_i^n}{\Delta x}, \frac{v_{i}^n-v_{i-1}^n}{\Delta x}\right),
\end{equation}
where

\begin{equation}
\text{avg}(a,b)= \left\{ \begin{array}{cl}

             \displaystyle\frac{|a|b+|b|a}{|a|+|b|} &   if  \, |a|+|b| > 0, \\ 
             0 & otherwise.

             \end{array}
             \right.
\end{equation}

\item Define
\begin{eqnarray*} 
u_{i-1/2}^{n,+} & = & u_{i;i-1/2}^{e,n} + Q^n_i(x_{i-1/2})= u_{i;i-1/2}^{e,n} - \frac{1}{2}\Delta_i v^n ,\\
u_{i+1/2}^{n,-} & = & u_{i;i+1/2}^{e,n} + Q^n_i(x_{i+1/2}) = u_{i;i+1/2}^{e,n}  +  \frac{1}{2}\Delta_i v^n , \\
P^n_i(x_i) & = & u_{i;i}^{e.n} +Q^n_i(x_i) = u_{i;i}^{e,n}, 
\end{eqnarray*}
where
$$
 u_{i; i \pm 1/2}^{e,n}  \approx u^e_i(x_{i\pm 1/2}).
$$

\end{enumerate}
\end{Algorithm}



Finally, two different choices for  $\widetilde{Q}^k_i$ are considered: the trivial piecewise constant reconstruction and  a piecewise linear one that uses the slope limiters of $P_i^n$.

\subsubsection{Piecewise constant reconstruction}\label{O2_3points}
In this case, $\widetilde{\mathcal{S}}_i = \{i\}$ and the reconstruction operator is given by
$$ \widetilde{Q}^k_i(x) =  u_i^{f,k}.$$
With this definition one has:
\begin{eqnarray*}
u^{k,-}_{i + 1/2} &=& u_{i+1/2}^{n,-} + \widetilde{Q}^k_i(x_{i-1/2}) = u_{i+1/2}^{n,-} + u_i^{f,k},\\
u^{k,+}_{i - 1/2} &=& u_{i-1/2}^{n,+} +  + \widetilde{Q}^k_i(x_{i+1/2})  = u_{i-1/2}^{n,+} +  u_i^{f,k},\\
P_i^k(x_i) &=& P_i^n(x_i) + \widetilde{Q}^k_i(x_i) =  u_{i;i}^{e,n} + u_i^{f,k}.
\end{eqnarray*}

\begin{Theorem}
Let us suppose that $\widetilde Q_i$ is the piecewise constant reconstruction operator. Then 
$$P_i^t(x)=P_i^n(x)+\widetilde Q_i$$
is a second-order reconstruction operator.
\end{Theorem}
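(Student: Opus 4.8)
The plan is to read ``second-order reconstruction operator'' in the usual sense: when the cell values $\{u_j(t)\}$ are the (quadrature) cell-averages of a smooth solution $u(\cdot,t)$, one should have $P_i^t(x)-u(x,t)=O(\Delta x^2)$ uniformly for $x\in I_i$, under the standard scaling $\Delta t = O(\Delta x)$. The natural starting point is to add and subtract $u(x,t^n)$ and use that, in the piecewise constant case, $P_i^t(x)=P_i^n(x)+u_i^f(t)$, which gives the splitting
$$P_i^t(x)-u(x,t)=\bigl(P_i^n(x)-u(x,t^n)\bigr)+\bigl(u_i^f(t)-w(x)\bigr),\qquad w(x):=u(x,t)-u(x,t^n).$$

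First I would dispatch the first bracket. Here $P_i^n$ is the well-balanced reconstruction built from the second-order MUSCL operator $Q_i$, and by the order-preservation property of the well-balanced reconstruction procedure recalled in Section~2 (the reconstruction inherits the accuracy of $Q_i$), one has $P_i^n(x)-u(x,t^n)=O(\Delta x^2)$ away from extrema. This part is immediate and carries no new content.

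The heart of the argument is the second bracket, where the choice $\widetilde Q_i(x)=u_i^f(t)$ is, in isolation, only first-order accurate; the point is that it reconstructs not $u$ but the \emph{time fluctuation} $w$, which is uniformly small. Writing $w(x)=\int_{t^n}^{t}u_\tau(x,\tau)\,d\tau$ gives $\|w\|_\infty=O(\Delta t)$ and, crucially, $\|w_x\|_\infty=O(\Delta t)$ provided $u_{xt}$ is bounded. Since $u_i^f(t)=u_i(t)-u_i^n$ equals exactly the quadrature cell-average $\overline{w}_i$ of $w$, the piecewise constant reconstruction of $w$ incurs the error $\overline{w}_i-w(x)=O(\Delta x)\,\|w_x\|_\infty=O(\Delta x\,\Delta t)$, the $O(\Delta x^2)$ quadrature term being even smaller because $\|w_{xx}\|_\infty=O(\Delta t)$ as well. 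Under $\Delta t=O(\Delta x)$ this is $O(\Delta x^2)$: the order lost by the constant reconstruction is precisely compensated by the $O(\Delta t)$ smallness of the fluctuation.

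Combining the two estimates yields $P_i^t(x)-u(x,t)=O(\Delta x^2)$, which is the claim. I expect the main obstacle to be making this compensation precise, namely justifying $\|w_x\|_\infty=O(\Delta t)$ from the temporal smoothness of $u$ and invoking the scaling $\Delta t=O(\Delta x)$ explicitly, since neither the second-order accuracy of $P_i^n$ alone nor the merely first-order accuracy of the piecewise constant $\widetilde Q_i$ alone delivers the result. An equivalent and perhaps cleaner route is to track the reconstructed slope: $P_i^t$ carries the time-$t^n$ slope $u_x(x_i,t^n)$ together with the updated cell value $u_i(t)=u(x_i,t)+O(\Delta x^2)$; because the slope drifts by only $O(\Delta t)$ over one step, using the stale slope produces an $O(\Delta x\,\Delta t)$ error once multiplied by $(x-x_i)$, again $O(\Delta x^2)$ under the scaling, and a Taylor expansion about $x_i$ then recovers $u(x,t)$ to second order.
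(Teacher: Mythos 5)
Your proposal is correct and follows essentially the same route as the paper: the paper likewise writes $P_i^t(x)=P_i^n(x)+\overline{u}_i(t)-\overline{u}_i^n$, invokes the second-order accuracy of $P_i^n$ at time $t^n$, and Taylor-expands $u(x_i,t)-u(x_i,t^n)$ about $(x,t^n)$ so that the $\partial_x u\,(x_i-x)$ terms cancel — which is exactly your observation that the fluctuation $w$ satisfies $\|w_x\|_\infty=O(\Delta t)$, making the piecewise-constant error $O(\Delta x\,\Delta t)=O(\Delta x^2)$ under $\Delta t=O(\Delta x)$. Your write-up merely makes the compensation mechanism more explicit than the paper's direct Taylor computation.
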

\begin{proof}
Given a function $u(x, t)$, we consider the reconstruction operator
$$P_i^t(x)=P_i^n(x)+\overline{u}_i(t)-\overline{u}_i^n,$$
where $\overline{u}_i(t)$ and $\overline{u}_i^n$ represent the cell-averages of $u$ at the $i$-th cell at times $t$ and $t^n$, respectively, and $P_i^n$ is a second-order well-balanced reconstruction operator applied to $\{\overline{u}_i^n\}$.

Let us see that $P_i^t$ is a second-order reconstruction operator: given $x \in I_i$
and assuming that $\Delta t = O(\Delta x)$ we have
\begin{eqnarray*}
P_i^t(x) &  =&   P_i^n(x) + \overline{u}_i(t) - \overline{u}_i^n\\
& = & u(x, t^n) +   u(x_i, t) - u(x_i, t^n) + O(\Delta x^2) \\
& = & u(x,  t^n)   + u(x,  t^n)  +  \partial_x u(x,  t^n) (x_i-x)+ \partial_t  u(x,  t^n) (t - t^n) \\
&   &  -  u(x,  t^n)  - \partial_x u(x,  t^n) (x_i-x) + O(\Delta x^2) \\
&  = &   u(x,  t^n)   +   \partial_t  u(x,  t^n) (t -  t^n) +  O(\Delta x^2) \\
& =  & u(x, t) + O(\Delta x^2).
\end{eqnarray*}
Then, $P_i^t$ is second-order accurate.
\end{proof}

\subsubsection{Piecewise linear reconstruction operator}\label{O2_5points}
In this case, $\widetilde{\mathcal{S}}_i = \mathcal{S}_i = \{i-1,i, i+1\}$ and the reconstruction operator is given by
$$
\widetilde{Q}^k_i(x) = u_i^{f,k} + {\Delta}u^{f,k}_i (x- x_i),
$$
where
$$
{\Delta}u^{f,k}_i = \varphi_i^{n,L}\frac{\left(u_i^{f,k} -u_{i-1}^{f,k}\right)}{\Delta x}+ \varphi_i^{n,R}\frac{\left(u_{i+1}^{f,k}-u_{i}^{f,k}\right)}{\Delta x}.
$$
Here, $\varphi_i^{n,L}$ and $\varphi_i^{n,R}$ are two slope limiters computed using the approximations $u_i^n$ at time $t^n$. For instance, if the avg limiter is chosen, one has:
\begin{equation}
    \varphi_i^{n,L}=\frac{|d_R|}{|d_L|+|d_R|}, \quad \varphi_i^{n,R}=\frac{|d_L|}{|d_L|+|d_R|},
\end{equation}
where
\begin{equation}
    d_L=u_i^n-u_{i-1}^n, \quad d_R=u_{i+1}^n-u_i^n.
\end{equation}
In this case we have:
With this definition one has:
\begin{eqnarray*}
u^{k,-}_{i + 1/2} &=& u_{i+1/2}^{n,-} + \widetilde{Q}^k_i(x_{i-1/2}),\\
u^{k,+}_{i - 1/2} &=& u_{i-1/2}^{n,+} +  + \widetilde{Q}^k_i(x_{i+1/2}),\\
P_i^k(x_i) &=& P_i^n(x_i) + \widetilde{Q}^k_i(x_i) =  u_{i;i}^{e,n} + u_i^{f,k},
\end{eqnarray*}
where
\begin{eqnarray}
\widetilde Q_i^k(x_{i-1/2}) &  = & u_i^{f,k} -  \frac{1}{2}\varphi_i^{n,L}\left(u_{i}^{f,k}-u_{i-1}^{f,k}\right)-\frac{1}{2}\varphi_i^{n,R}\left(u_{i+1}^{f,k}-u_{i}^{f,k}\right),\\
\widetilde Q_i^k(x_{i+1/2}) &  = & u_i^{f,k} +  \frac{1}{2}\varphi_i^{n,L}\left(u_{i}^{f,k}-u_{i-1}^{f,k}\right)+\frac{1}{2}\varphi_i^{n,R}\left(u_{i+1}^{f,k}-u_{i}^{f,k}\right).
\end{eqnarray}



\subsection{Semi-implicit methods}
If not all terms of the equation are stiff, then it is not necessary to use a fully implicit scheme for the whole system.

Let us suppose, for example, that the problem writes as follows:
\begin{equation}\label{PDE_generalproblem_semi}
    u_t+f^1(u)_x + f^2(u)_x =S^1(u)H_{x} + S^2(u) ,
\end{equation}
where $H$ is a known function, 
with $f^1$ and $S^1$ non stiff, and $f^2$ and $S^2$ stiff. Then the problem can be more efficiently treated by adopting IMEX methods, in which the non stiff terms are treated explicitly, while the stiff terms are treated implicitly.
We select numerical fluxes $F^i(u_l, u_r)$, $i =1,2$ consistent with $f^i$, $i = 1,2$ and  an IMEX method with Butcher tableaux:
  \begin{equation}\label{DIRK_semi}
  \begin{array}{c|ccccc}
0 & 0 & 0      & 0 &\dots & 0 \\
\tilde c_2    &  \tilde a_{2,1}& 0 &  0 & \dots & 0\\
\tilde c_3    & \tilde a_{3,1} & \tilde a_{3,2} & 0 & \dots & 0 \\
\vdots & \vdots  & \vdots & \vdots & \ddots & \vdots \\
\tilde c_s   & \tilde a_{s,1} & \tilde a_{s,2} & \tilde a_{s,3}& \dots & 0 \\\hline
   & \tilde b_1 & \tilde b_{2} & \tilde b_{3}& \dots & \tilde b_s \\
\end{array}, \qquad
\begin{array}{c|ccccc}
\gamma & \gamma  & 0      & 0 &\dots & 0 \\
c_2    &  a_{2,1}& \gamma &  0 & \dots & 0\\
c_3    & a_{3,1} & a_{3,2} & \gamma & \dots & 0 \\
\vdots & \vdots  & \vdots & \vdots & \ddots & \vdots \\
1   & a_{s,1} & a_{s,2} & a_{s,3}& \dots & \gamma \\\hline
   & a_{s,1} & a_{s,2} & a_{s,3}& \dots & \gamma \\
\end{array}
 \end{equation}
 The numerical method writes then as follows:
 \begin{eqnarray*}
u_i^{f,k} & = &  \Delta t \sum_{l = 1}^{k-1} \tilde a_{k,l} L^{1,l}_i + 
\Delta t \sum_{l = 1}^{k-1} a_{k,l} L^{2,l}_i + \Delta t \gamma L_i^{2,k}, \quad k = 1, \dots, s,\\
u_i^{f,n+1} & = & \Delta t \sum_{l = 1}^s \tilde b_i L^{1,l}_i + 
\Delta t \sum_{l = 1}^{s-1} a_{s,i} L^{2,l}_i + \Delta t\gamma L^{2,s}_i,
\end{eqnarray*}
where
    \begin{equation}
    \begin{split}
         L_{i}^{1,k}&= -\frac{1}{\Delta x} \left( F_{i+1/2}^{1,k} - F_{i-1/2}^{1,k} \right) +
\frac{1}{ \Delta x} \left( f^1\left(u^{e,n}_{i;i+1/2}\right)- f^1\left(u^{e,n}_{i;i-1/2}\right) \right)\\
& +
\sum_{m=1}^{M}\alpha_m  \left( S^1(P_i^{k}(x_i^m))-S^1(u^{e,n}_{i;i,m}) \right)H_{x}(x_i^m);\\
L_{i}^{2,k}&= -\frac{1}{ \Delta x} \left( F_{i+1/2}^{2,k} - F_{i-1/2}^{2,k} \right) +
\frac{1}{ \Delta x} \left( f^2\left(u^{e,n}_{i;i+1/2}\right)- f^2\left(u^{e,n}_{i;i-1/2}\right) \right)\\
& +
\sum_{m=1}^{M}\alpha_m  \left( S^2(P_i^{k}(x_i^m))-S^2(u^{e,n}_{i;i,m}) \right);
    \end{split}
\end{equation}
for  $k=1,\dots, s$.
For instance, if the second order IMEX method with tableaux
  \begin{equation}\label{DIRK_O2}
  \begin{array}{c|cc}
0 & 0 & 0\\
\frac{1}{2 \gamma} & \frac{1}{2 \gamma}  & 0 \\ \hline
   & 1 - \gamma & \gamma\\
\end{array}, \qquad
\begin{array}{c|cc}
\gamma & \gamma  & 0 \\
1    &  1 -\gamma & \gamma \\\hline
   & 1 - \gamma & \gamma\\
\end{array},
 \end{equation}
 with
 $\gamma = (2 - \sqrt{2})/2$ is selected, the numerical method writes as follows:
 \begin{eqnarray*}
 u_i^{f,1} &= & \Delta t \gamma L_i^{2,1}, \\
 u_i^{f,2} & = & \frac{\Delta t}{2 \gamma } L_i^{1,1} + \Delta t (1 -  \gamma) L_i^{2,1} + \Delta t \gamma L_i^{2,2},\\
 u_i^{f, n+1} &=& {\Delta t}\left( (1 - \gamma)L_i^{1,1} + \gamma L_i^{1,2} + 
 (1- \gamma)L_i^{2,1} + \gamma L_i^{2,2} \right),
 \end{eqnarray*}
 or, equivalently,
 \begin{eqnarray*}
 u_i^{f,1} &= & \Delta t \gamma L_i^{2,1}, \\
 u_i^{f,2} & = & \frac{\Delta t}{2 \gamma }  L_i^{1,1} + \frac{1 - \gamma}{\gamma} u_i^{f,1} + \Delta t \gamma L_i^{2,2},\\
 u_i^{f, n+1} &= & u_i^{f,2} + \Delta t\left( 1 - \gamma - \frac{1}{2 \gamma} \right) L_i^{1,1} + \gamma \Delta t L_i^{1,2}.
 \end{eqnarray*}

\section{Numerical tests}

 The following acronyms will be used in this section to denote the different methods considered:

\begin{itemize}

\item EXWBM$p$: explicit well-balanced numerical method of order $p$ where the well-balanced reconstruction operator is based on RK collocation methods.

\item IEWBM$p$: implicit exactly well-balanced numerical method of order $p$.

\item IWBM$p$: implicit well-balanced numerical method of order $p$ where the well-balanced reconstruction operator is based on RK collocation methods.

\item SIEWBM$p$: semi-implicit exactly well-balanced numerical method of order $p$.

\item SIWBM$p$: semi-implicit well-balanced numerical method of order $p$ where the well-balanced reconstruction operator is based on RK collocation methods.

\end{itemize}

Although in principle the methodology introduced here can be used to design high-order well-balanced implicit or semi-implicit numerical methods, we have only implemented so far first- and second-order methods. The midpoint rule is considered to approximate the integrals and 1-stage RK collocation methods are applied to obtain the discrete stationary solutions and to solve the local nonlinear problems in the first step of the well-balanced reconstruction procedure.

\subsection{Transport equation}
 Let us consider the linear transport equation
\begin{equation}\label{eq_transport}
  u_t+cu_x= \alpha u,  
\end{equation}
where  $c, \alpha \in \mathbb{R}$. The stationary solutions of \eqref{eq_transport} are the 1-parameter family:
$$
u^e(x)=C e^{\frac{\alpha}{c} x}, \quad C \in \mathbb{R}.
$$
Therefore, given a family of cell averages 
$\{u_i^n\}$, and considering that cell-averages and pointwise values of a smooth function at cell center agree to second-order in $\Delta x$, the stationary solution $u^{e,n}_i(x)$ such that $u_i^{e,n}(x_i)=u_i^n$ is
$$
u^{e,n}_i(x)=u_i^n e^{\frac{\alpha}{c} (x-x_i)}.
$$
Exactly well-balanced methods will be considered. The Rusanov numerical flux
 $$
 \mathbb{F}(u,v)=\frac{c}{2}(u+v)-\frac{k}{2}(v-u),
 $$
 is considered, with $k=|c|$.
 
 \subsubsection{First-order method}
In the case of the  first-order scheme, one has
 \begin{equation*}
     \begin{split}
     F_{i+1/2}^{n+1}&=\frac{c}{2}\left(u_i^{e,n}(x_{i+1/2})+u_i^{f,n+1}+u_{i+1}^{e,n}(x_{i+1/2})+u_{i+1}^{f,n+1}\right)\\ &-\frac{k}{2} \left(u_{i+1}^{e,n}(x_{i+1/2})+u_{i+1}^{f,n+1}-u_i^{e,n}(x_{i+1/2})-u_i^{f,n+1}\right),
      \end{split}
 \end{equation*}
 which leads to the following expression of \eqref{timefluc_O1}:
  \begin{equation*} \label{metnum_o1_transport}
\begin{split}
u_i^{f,n+1} & = - \frac{\Delta t}{\Delta x} \left( \frac{c}{2} \left(u_{i+1}^{f,n+1}-u_{i-1}^{f,n+1}\right)-\frac{k}{2}\left(u_{i+1}^{f,n+1}-2u_{i}^{f,n+1}+u_{i-1}^{f,n+1}\right)\right)\\
 &  -\frac{\Delta t}{ \Delta x} \left(  \mathbb{F}\left(u_i^{e,n}(x_{i+1/2}),u_{i+1}^{e,n}(x_{i+1/2})\right) - \mathbb{F}\left(u_{i-1}^{e,n}(x_{i-1/2}),u_i^{e,n}(x_{i-1/2})\right)\right)\\
 &+\frac{\Delta t}{ \Delta x} \left( f\left(u_i^{e,n}(x_{i+1/2})\right)-f\left(u_i^{e,n}(x_{i-1/2})\right)\right)
 +\alpha \Delta t \left(u_i^{e,n}(x_i)+u_i^{f,n+1}-u_i^{e,n}(x_i)\right).
\end{split}
\end{equation*}
Since
$$
\mathbb{F}\left(u_i^{e,n}(x_{i+1/2}),u_{i+1}^{e,n}(x_{i+1/2})\right)=\mathbb{F}\left(u^{n,-}_{i+1/2},u^{n,+}_{i+1/2}\right)
= F_{i+1/2}^n,
$$
the linear system for the time fluctuations for the first-order case is:
  \begin{equation*} 
\begin{split}
u_i^{f,n+1} + \frac{\Delta t}{ \Delta x} \left( \frac{c}{2} \left(u_{i+1}^{f,n+1}-u_{i-1}^{f,n+1}\right)-\frac{k}{2}\left(u_{i+1}^{f,n+1}-2u_{i}^{f,n+1}+u_{i-1}^{f,n+1}\right)\right) - \alpha\Delta t u_i^{f,n+1}  = \\ 
- \frac{\Delta t}{ \Delta x} \left(F_{i+1/2}^{n}- F_{i-1/2}^{n} \right)  + \frac{\Delta t}{ \Delta x} \left( f\left(u_i^{e,n}(x_{i+1/2})\right)-f\left(u_i^{e,n}(x_{i-1/2})\right)\right).
\end{split}
\end{equation*}
If boundary conditions are neglected, a linear system has to be solved whose matrix is tridiagonal with coefficients
$$
d_0 = 1 + \lambda k - \alpha \Delta t, \quad  d_{-1}= - \frac{\lambda}{2} ( c+ k ), \quad d_1 = - \frac{\lambda}{2}( -c+ k ), 
$$
in the main, the lower, and the upper diagonals respectively, where
$$
\lambda = \frac{\Delta t}{\Delta x}.
$$

\subsubsection{Second-order methods}
The system of equations for $u_i^{f,1}$ is as follows:
\begin{equation*}\label{metnum_o2_transport_u1_v1}
    \begin{split}
        u_i^{f,1}&= -\frac{\gamma \Delta t}{ \Delta x} \left( F_{i+1/2}^{1} - F_{i-1/2}^{1}\right) 
        +
\frac{\gamma \Delta t}{ \Delta x} \left( f\left(u^{e,n}_i(x_{i+1/2})\right)- f\left(u^{e,n}_i(x_{i-1/2})\right) \right) \\
&+
\gamma \Delta t \left( u_i^{e,n}(x_i)+u_i^{f,1}-u_i^{e,n}(x_i) \right).
    \end{split}
\end{equation*}

If the piecewise constant reconstruction $\widetilde{Q}^k_i$ described in Subsection \ref{O2_3points} is selected, the expressions of the numerical flux is as follows:
\begin{equation*}
     \begin{split}
     F_{i+1/2}^{1}&=\frac{c}{2}\left(u_{i+1/2}^{n, -}+ u_i^{f,k} +u_{i+1/2}^{n, +}+ u_{i+1}^{f,k}\right)\\
     &-\frac{k}{2} \left(u_{i+1/2}^{n, +}+ u_{i+1}^{f,k} - u_{i+1/2}^{n, -}- u_i^{f,k}\right),
      \end{split}
 \end{equation*}
 and the expression of the numerical method is as follows:
 \begin{equation*} \label{metnum_o2_transport_u1_v3}
\begin{split}
u_i^{f,1} &+ \frac{\gamma \Delta t}{ \Delta x} \frac{c}{2}   \left(u_{i+1}^{f,1}-u_{i-1}^{f,1}\right)- \frac{\gamma \Delta t k}{2  \Delta x }    \left(u_{i+1}^{f,1}-2u_i^{f,1}+u_{i-1}^{f,1}\right)
- \gamma \alpha \Delta t u_i^{f,1}  = \\ 
&- \frac{\gamma \Delta t}{ \Delta x} \left(F_{i+1/2}^{n}- F_{i-1/2}^{n} \right)  + \frac{\gamma \Delta t}{ \Delta x} \left( f\left(u_i^{e,n}(x_{i+1/2})\right)-f\left(u_i^{e,n}(x_{i-1/2})\right)\right).
\end{split}
\end{equation*}
 \begin{equation*} \label{metnum_o2_transport_un+1_v2}
\begin{split}
u_i^{f,n+1} &+ \frac{\gamma \Delta t}{ \Delta x} \frac{c}{2}   \left(u_{i+1}^{f,n+1}-u_{i-1}^{f,n+1}\right)- \frac{\gamma \Delta t k}{2  \Delta x }    \left(u_{i+1}^{f,n+1}-2u_i^{f,n+1}+u_{i-1}^{f,n+1}\right)
- \gamma \alpha \Delta t u_i^{f,n+1}  = \\ 
&- \frac{\gamma \Delta t}{ \Delta x} \left(F_{i+1/2}^{n}- F_{i-1/2}^{n} \right)  + \frac{\gamma \Delta t}{ \Delta x} \left( f\left(u_i^{e,n}(x_{i+1/2})\right)-f\left(u_i^{e,n}(x_{i-1/2})\right)\right)+\frac{1-\gamma}{\gamma}u_i^{f,1}.
\end{split}
\end{equation*}
If, again, boundary conditions are neglected, two linear systems have to be solved with the same tridiagonal matrix whose coefficients are now
$$
d_0 = 1 + \gamma \lambda k - \gamma \alpha \Delta t, \quad  d_{-1}= - \frac{\gamma\lambda}{2} ( c+ k ), \quad d_1 = - \frac{\gamma\lambda}{2}( -c+ k ).
$$

If we consider now the piecewise linear reconstruction described in Subsection \ref{O2_5points}, the implementation of the numerical method \eqref{timefluc_O2} leads to solve the following linear systems with pentadiagonal matrices:
  \begin{equation*} \label{metnum_o2_transport_u1_5p}
\begin{array}{l}
 u_i^{f,1}  \left [ 1 -  \gamma \alpha \Delta t+ \displaystyle \frac{\gamma \Delta t}{ \Delta x} \frac{c}{2}\left(\varphi_i^{n,L}-\varphi_i^{n,R}+\frac{1}{2}\left(\varphi_{i+1}^{n,L}-\varphi_{i-1}^{n,R}\right)\right)+\frac{\gamma \Delta t}{ \Delta x}\frac{k}{2}\left(2-\frac{1}{2}\left(\varphi_{i+1}^{n,L}+\varphi_{i-1}^{n,R}\right)\right)\right]
  \\ 
 + u_{i+1}^{f,1}  \left [ \displaystyle \frac{\gamma \Delta t}{ \Delta x} \frac{c}{2}\left(1+\varphi_i^{n,R}+\frac{1}{2}\left(\varphi_{i+1}^{n,R}-\varphi_{i+1}^{n,L}\right)\right)-\frac{\gamma \Delta t}{ \Delta x}\frac{k}{2}\left(1+\frac{1}{2}\left(\varphi_{i+1}^{n,R}-\varphi_{i+1}^{n,L}\right)\right)\right]\\
  + u_{i-1}^{f,1}  \left [ \displaystyle \frac{\gamma \Delta t}{ \Delta x} \frac{c}{2}\left(-1-\varphi_i^{n,L}+\frac{1}{2}\left(\varphi_{i-1}^{n,R}-\varphi_{i-1}^{n,L}\right)\right)+\frac{\gamma \Delta t}{ \Delta x}\frac{k}{2}\left(-1+\frac{1}{2}\left(\varphi_{i-1}^{n,R}-\varphi_{i-1}^{n,L}\right)\right)\right]\\
  + u_{i+2}^{f,1}  \left [ \displaystyle \frac{\gamma \Delta t}{ \Delta x} \varphi_{i+1}^{n,R} \frac{k-c}{4}\right]
  + u_{i-2}^{f,1}  \left [ \displaystyle \frac{\gamma \Delta t}{ \Delta x} \varphi_{i-1}^{n,L} \frac{k+c}{4}\right]\\
 =
- \displaystyle \frac{\gamma \Delta t}{ \Delta x} \left(F_{i+1/2}^{n}- F_{i-1/2}^{n} \right)  + \displaystyle \frac{\gamma \Delta t}{ \Delta x} \left( f\left(u_i^{e,n}(x_{i+1/2})\right)-f\left(u_i^{e,n}(x_{i-1/2})\right)\right).  
\end{array}
\end{equation*}

  \begin{equation*} \label{metnum_o2_transport_un+1_5p}
\begin{array}{l}
 u_i^{f,n+1}  \left [ 1 - \displaystyle \gamma \alpha \Delta t+ \displaystyle \frac{\gamma \Delta t}{ \Delta x} \frac{c}{2}\left(\varphi_i^{n,L}-\varphi_i^{n,R}+\frac{1}{2}\left(\varphi_{i+1}^{n,L}-\varphi_{i-1}^{n,R}\right)\right)+\frac{\gamma \Delta t}{\Delta x}\frac{k}{2}\left(2-\frac{1}{2}\left(\varphi_{i+1}^{n,L}+\varphi_{i-1}^{n,R}\right)\right)\right]
  \\ 
 + u_{i+1}^{f,n+1}  \left [ \displaystyle \frac{\gamma \Delta t}{ \Delta x} \frac{c}{2}\left(1+\varphi_i^{n,R}+\frac{1}{2}\left(\varphi_{i+1}^{n,R}-\varphi_{i+1}^{n,L}\right)\right)-\frac{\gamma \Delta t}{ \Delta x}\frac{k}{2}\left(1+\frac{1}{2}\left(\varphi_{i+1}^{n,R}-\varphi_{i+1}^{n,L}\right)\right)\right]\\
  + u_{i-1}^{f,n+1}  \left [ \displaystyle \frac{\gamma \Delta t}{ \Delta x} \frac{c}{2}\left(-1-\varphi_i^{n,L}+\frac{1}{2}\left(\varphi_{i-1}^{n,R}-\varphi_{i-1}^{n,L}\right)\right)+\frac{\gamma \Delta t}{ \Delta x}\frac{k}{2}\left(-1+\frac{1}{2}\left(\varphi_{i-1}^{n,R}-\varphi_{i-1}^{n,L}\right)\right)\right]\\
  + u_{i+2}^{f,n+1}  \left [ \displaystyle \frac{\gamma \Delta t}{ \Delta x} \varphi_{i+1}^{n,R} \frac{k-c}{4}\right]
  + u_{i-2}^{f,n+1}  \left [ \displaystyle \frac{\gamma \Delta t}{ \Delta x} \varphi_{i-1}^{n,L} \frac{k+c}{4}\right]\\
 =
- \displaystyle \frac{\gamma \Delta t}{ \Delta x} \left(F_{i+1/2}^{n}- F_{i-1/2}^{n} \right)  + \displaystyle \frac{\gamma \Delta t}{ \Delta x} \left( f\left(u_i^{e,n}(x_{i+1/2})\right)-f\left(u_i^{e,n}(x_{i-1/2})\right)\right)+\frac{1-\gamma}{\gamma}u_i^{f,1}. 
\end{array}
\end{equation*}

\subsubsection{Test 1: stationary solution}

Let us consider the space interval  $[0, 2]$ and the time interval  $[0,1]$, $\alpha=1$ and $c=1$. The CFL parameter is set to 2. In order to check the well-balanced property, we consider the stationary solution of \eqref{eq_transport}
$$
u_0(x)=e^{x}$$ 
as initial condition. $L^1$-errors between the initial and final cell-averages have been computed for IEWBM$p$, $p=1,2$, using a 200-cell mesh (see Table \ref{transport_wbcheck}).
\begin{table}[ht]
\centering
\begin{tabular}{|c|cc|} \hline
IEWBM1 & \multicolumn{2}{c|}{IEWBM2}  \\
 &  PWCR & PWLR \\ \hline
1.63e-13 & 1.64e-13 & 1.57e-13 \\\hline
\end{tabular}
 \caption{Transport equation: Test 1. $L^1$-errors at $t = 1$ for IEWBM1 and IEWBM2 with piecewise contant (PWCR) or piecewise linear (PWL) reconstruction $\widetilde{Q}_i$.} 
\label{transport_wbcheck}
\end{table}

\subsubsection{Test 2:  perturbation of a stationary solution}
We consider now an initial condition that represents a perturbation of the stationary solution of the previous test case:
$$
u_0(x)=e^{x}+\frac{1}{2}e^{-100(x-0.3)^2}.$$ 
A reference solution has been obtained using IEWBM2 with piecewise linear reconstruction $\widetilde{Q}_i$ in a 6400-cell mesh. Table \ref{transport_order} shows the errors in $L^1$-norm with respect to the reference solution for EWBM1 and EWBM2 for both the piecewise constant and linear reconstructions. Notice that the errors decrease with the number of cells at the expected rate.

\begin{table}[ht]
\centering
\begin{tabular}{|c|cc|cc|cc|} \hline
Cells & \multicolumn{2}{c|}{IEWBM1}& \multicolumn{4}{c|}{IEWBM2}\\ \hline
 & & & \multicolumn{2}{c|}{PWCR} &\multicolumn{2}{c|}{PWLR}\\

  & Error&Order & Error&Order &Error& Order \\\hline
  
25& 7.27e-02 & - & 3.65e-1 & - & 1.99e-1 & - \\
50& 6.37e-02 & 0.19& 2.72e-01 & 0.42 &  1.09e-01  & 0.87 \\
100& 3.83e-02 & 0.73 & 1.57e-01 & 0.79 &  3.81e-02 & 1.51 \\
200&   2.17e-02 & 0.82 & 5.40e-02 & 1.52 &   9.39e-03 & 2.02 \\
400&   1.57e-02 & 0.47 & 1.45e-02 & 1.89 &    2.19e-03 & 2.10 \\
800&  6.62e-03 & 1.24& 3.70e-03 & 1.98 &    5.21e-04 & 2.06 \\
1600& 3.43e-03 & 0.95 & 9.24e-04 & 2.00 &    1.23e-04 & 2.08 \\
 \hline
\end{tabular}
\caption{ Transport equation: Test 2. Differences in $L^1$-norm between the reference  and the numerical solutions and convergence rates at $t=1$ for IEWBM1 and IEWBM2 with piecewise contant (PWCR) or piecewise linear (PWL) reconstruction $\widetilde{Q}_i$ with CFL=2.} 
\label{transport_order}
\end{table}

We have also compared the numerical solutions when different values of the CFL number are chosen (see Figures \ref{transport_difCFL_O1}-\ref{transport_difCFL_O2}).
\begin{figure}[ht]
 \begin{center}
   \includegraphics[width=0.8\textwidth]{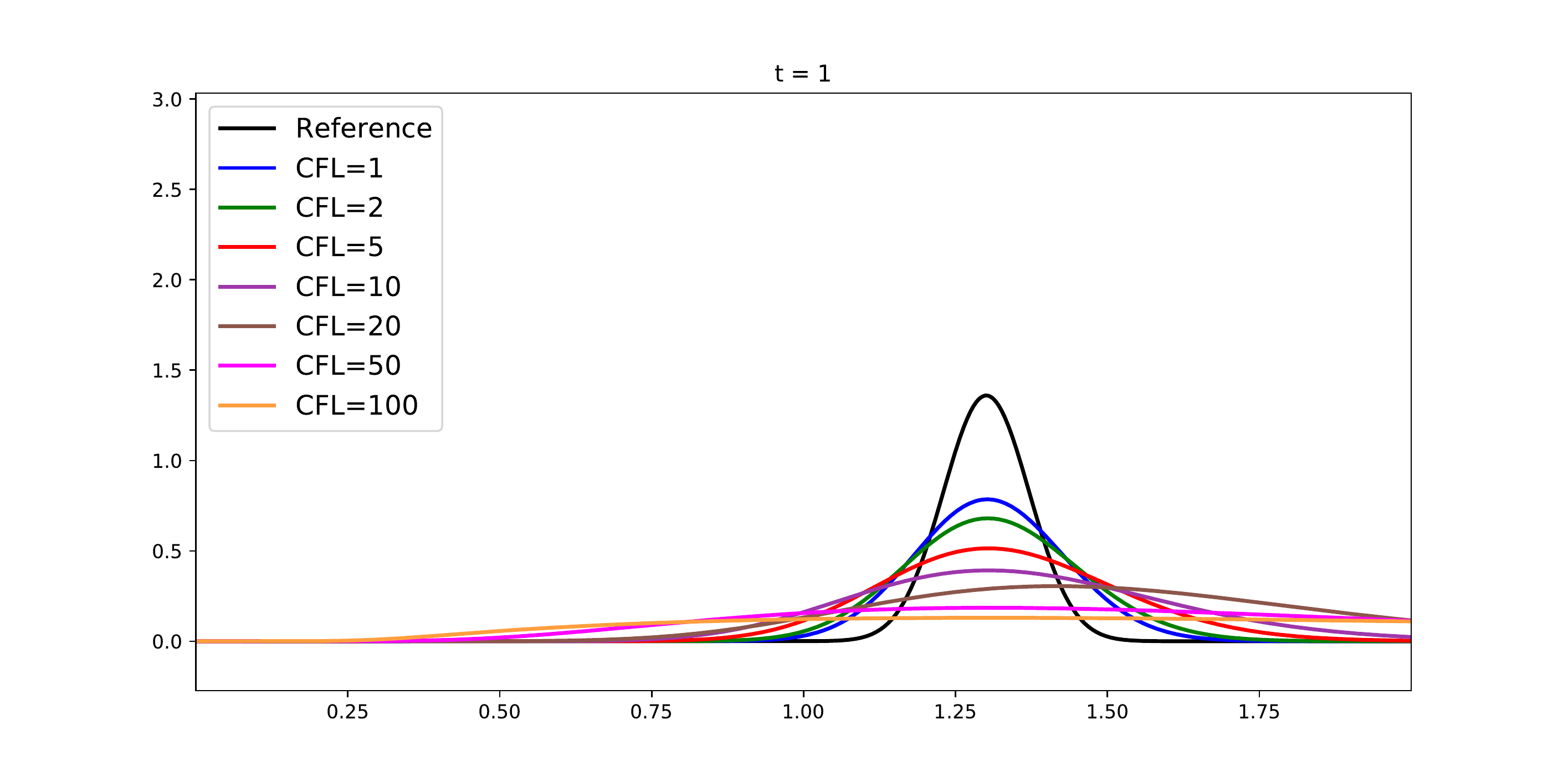}
     \caption{Transport equation: Test 2. Differences between $e^x$ and the  numerical solutions obtained with IEWBM1 using different CFL values  at $t=1$.} 
     \label{transport_difCFL_O1}
  \end{center}
 \end{figure}
 
 \begin{figure}[ht]
 \begin{center}
   \includegraphics[width=0.65\textwidth]{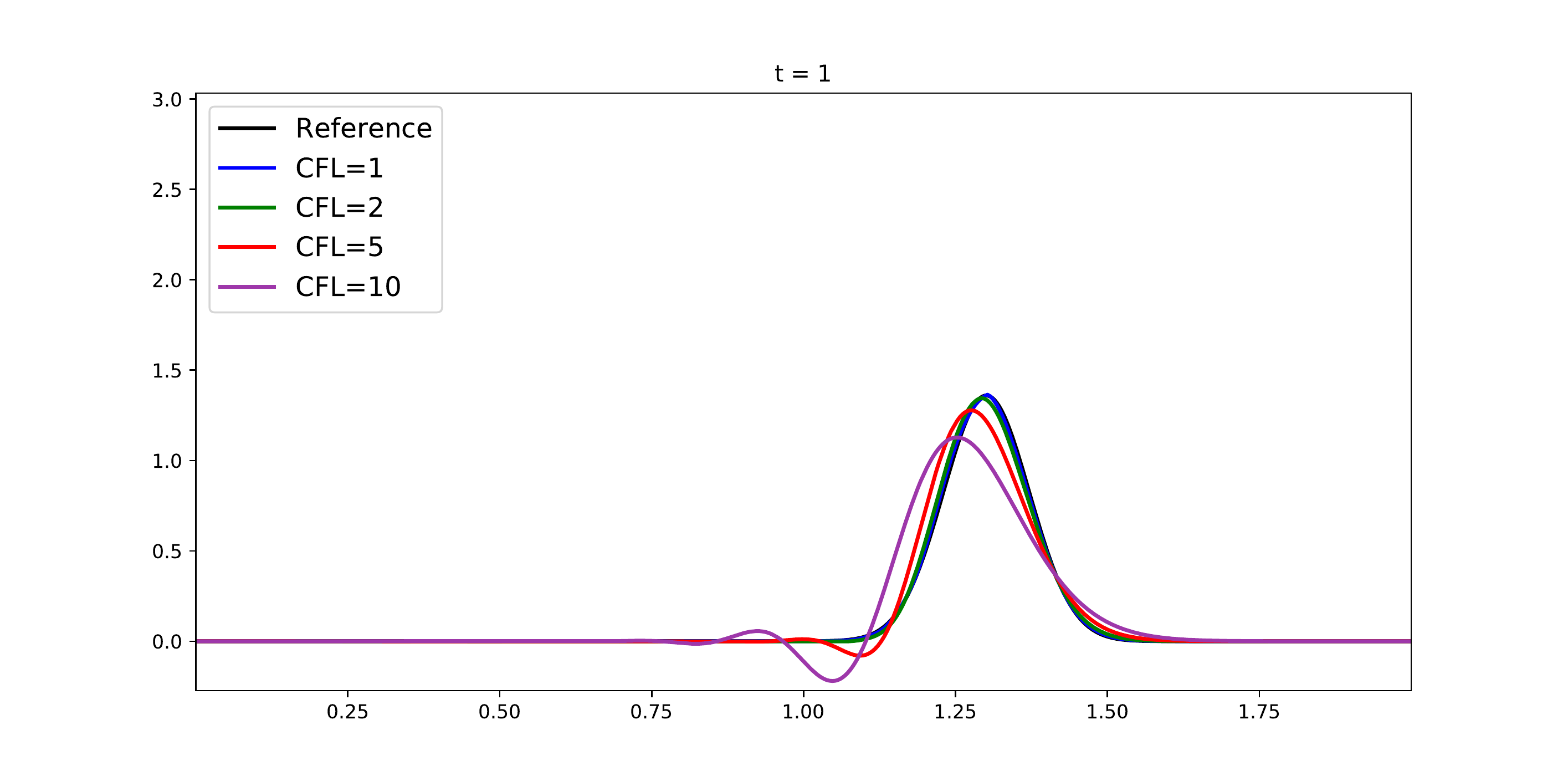}
   \includegraphics[width=0.65\textwidth]{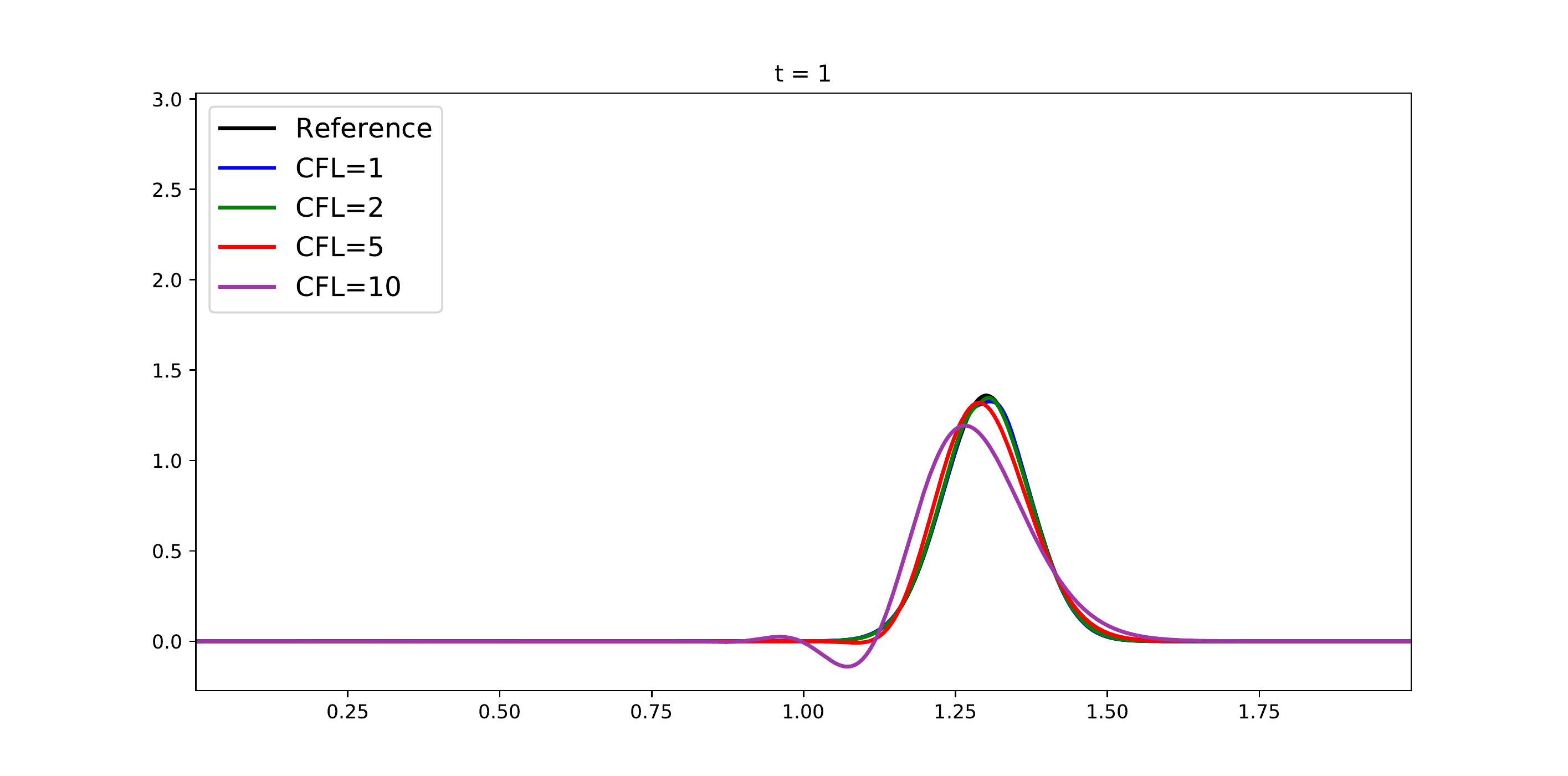}
     \caption{Transport equation: Test 2. Differences between $e^x$  and the numerical solutions obtained with IEWBM2 for different CFL values at $t=1$. Top: PWCR. Bottom:  PWLR. 
      }
     \label{transport_difCFL_O2}
  \end{center}
 \end{figure}
 
 Notice that, whereas for the first-order methods big values of CFL can be considered, the second order schemes present some oscillations related to the time integrator: a linear analysis performed by M. L\'opez-Fern\'andez shows that the method is stable for CFL values lower than $1 + \sqrt{2}$. 
 Moreover, the numerical solutions obtained with the piecewise constant reconstruction $\widetilde{Q}_i$ presents more oscillations. 
 
 Furthermore, due to the well-balanced property of the methods, they are able to recover the stationary solution once the perturbation has left the domain. This is shown in Table \ref{transport_wbpert}, where $L^1$-errors between the stationary solution and the numerical solutions at time $t = 5$  have been computed for IEWBM$i$, $i=1,2$, using a 400-cell mesh and CFL$=2$.

\begin{table}[ht]
\centering
\begin{tabular}{|c|cc|} \hline
IEWBM1 & \multicolumn{2}{c|}{IEWBM2}  \\
 &   PWCR & PWLR \\ \hline
4.15e-13 & 4.10e-13 & 4.09e-13 \\\hline
\end{tabular}
 \caption{Transport equation: Test 2. Differences in $L^1$-norm between the stationary and the numerical solution  at $t=5$ for IEWBM1 and IEWBM2 with piecewise contant (PWCR) or piecewise linear (PWL) reconstruction $\widetilde{Q}_i$.} 
\label{transport_wbpert}
\end{table}

\subsection{Burgers equation}
 Let us consider now the Burgers equation with a nonlinear source term
\begin{equation}\label{eq_burgers}
  u_t+\left(\frac{1}{2}u^2\right)_x= \alpha u^2,
\end{equation}
where  $\alpha \in \mathbb{R}$. Again, the stationary solutions can be easily obtained:
$$
u^e(x)=C e^{{\alpha} x}
$$
Then, given a family of cell values $\{u_i^n\}$, the stationary solution $u^{e,n}_i(x)$  sought in the first step of the reconstruction procedure is
$$
u^{e,n}_i(x)=u_i^n e^{\alpha (x-x_i)}.
$$
Exactly well-balanced methods are considered again. 

Due to the non-linearity of the flux and the source term, nonlinear systems have to be solved now to find the time fluctuations. A numerical method for nonlinear systems of algebraic equations is required. In particular, in this problem the Newton's method is considered. For the first-order methods, only one iteration of the Newton's method is performed. The Rusanov numerical flux 
 $$
 \mathbb{F}(u,v)=\frac{c}{2}(f(u)+f(v))-\frac{k}{2}(v-u),
 $$
 is considered again, where $k$ is the viscosity constant.
 
 \subsubsection{Test 1: stationary solution}
 We consider the space interval  $[0, 2]$, the time interval  $[0,1]$, $\alpha=1$, and CFL=2. As initial condition, we consider the stationary solution $$
u_0(x)=e^{x}.$$
$L^1$ errors between the initial and final cell-averages have been computed for IEWBM$p$, $p=1,2$, using a 200-cell mesh (see Table \ref{burgers_wbcheck}).
\begin{table}[ht]
\centering
\begin{tabular}{|c|cc|} \hline
IEWBM1 & \multicolumn{2}{c|}{IEWBM2}  \\
 &   PWCR  & PWLR  \\ \hline
1.54e-13 & 1.32e-13 & 1.56e-13 \\\hline
\end{tabular}
 \caption{Burgers equation: Test 1. $L^1$-errors between the stationary and the numerical solution at $t = 1$ for IEWBM1 and IEWBM2 with piecewise contant (PWCR) or piecewise linear (PWLR) reconstruction $\widetilde{Q}_i$.
 } 
\label{burgers_wbcheck}
\end{table}

\subsubsection{Test 2: perturbation of a stationary solution}
We consider now the initial condition:
$$
u_0(x)=e^{x}+0.4e^{-25(x-0.4)^2}.$$
A reference solution has been computed using again IEWBM2 with piecewise linear reconstruction $\widetilde{Q}_i$ with a 3200-cell mesh. The numerical solutions obtained using  different values of the CFL number are shown in Figures \ref{burgers_difCFL_O1}-\ref{burgers_difCFL_O2}. Notice that, unlike the linear case, no oscillations appear for second-order methods for large CFL values. Moreover, similar results are obtained with the piecewise constant and linear reconstructions $\widetilde{Q}_i$.

\begin{figure}[ht]
 \begin{center}
   \includegraphics[width=0.8\textwidth]{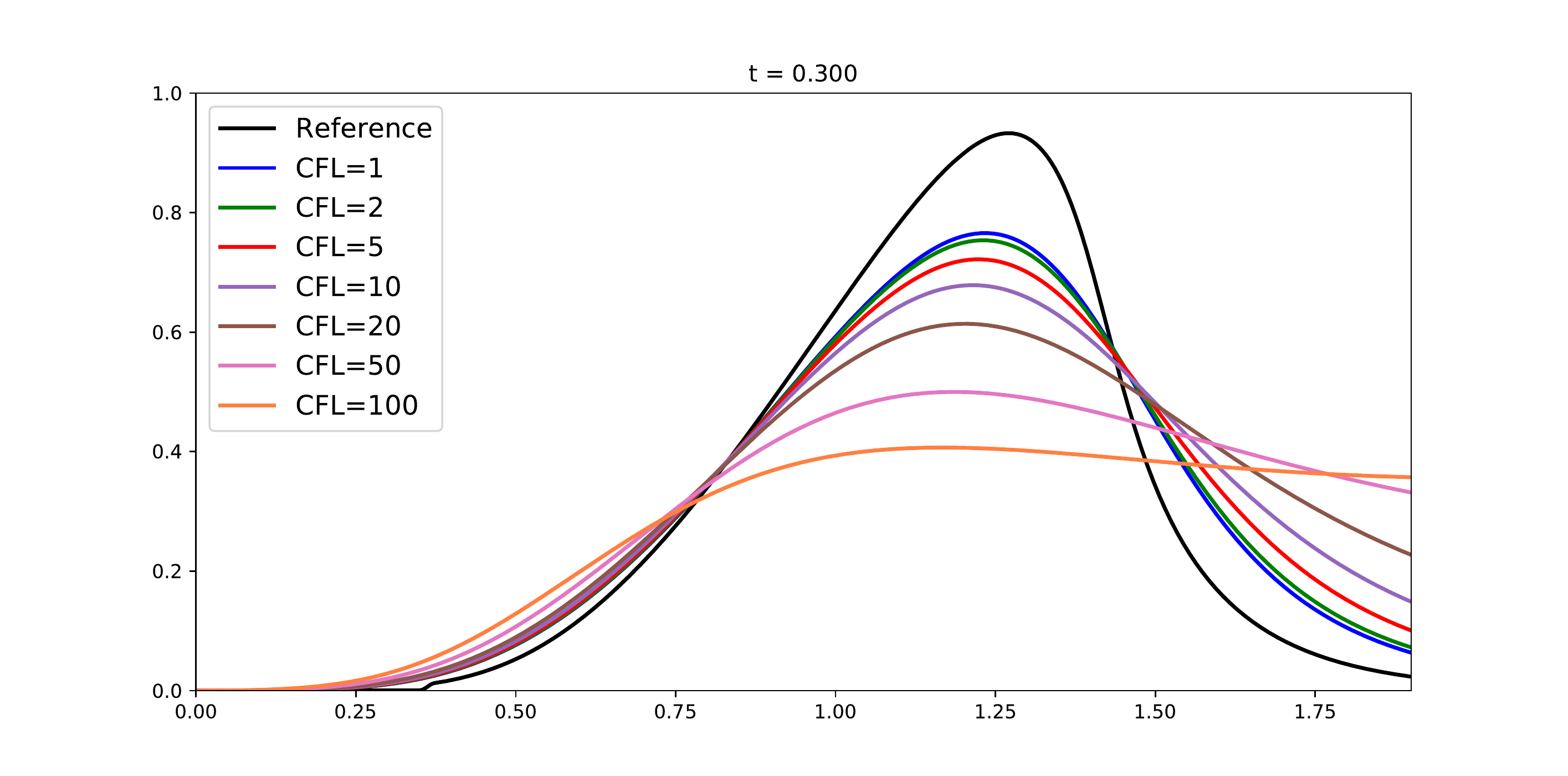}
     \caption{Burgers equation: Test 2. Differences between $e^x$ and the  numerical solutions obtained with IEWBM1 using different CFL values at $t=0.3$.} 
     \label{burgers_difCFL_O1}
  \end{center}
 \end{figure}
 
 \begin{figure}[ht]
 \begin{center}
   \includegraphics[width=0.65\textwidth]{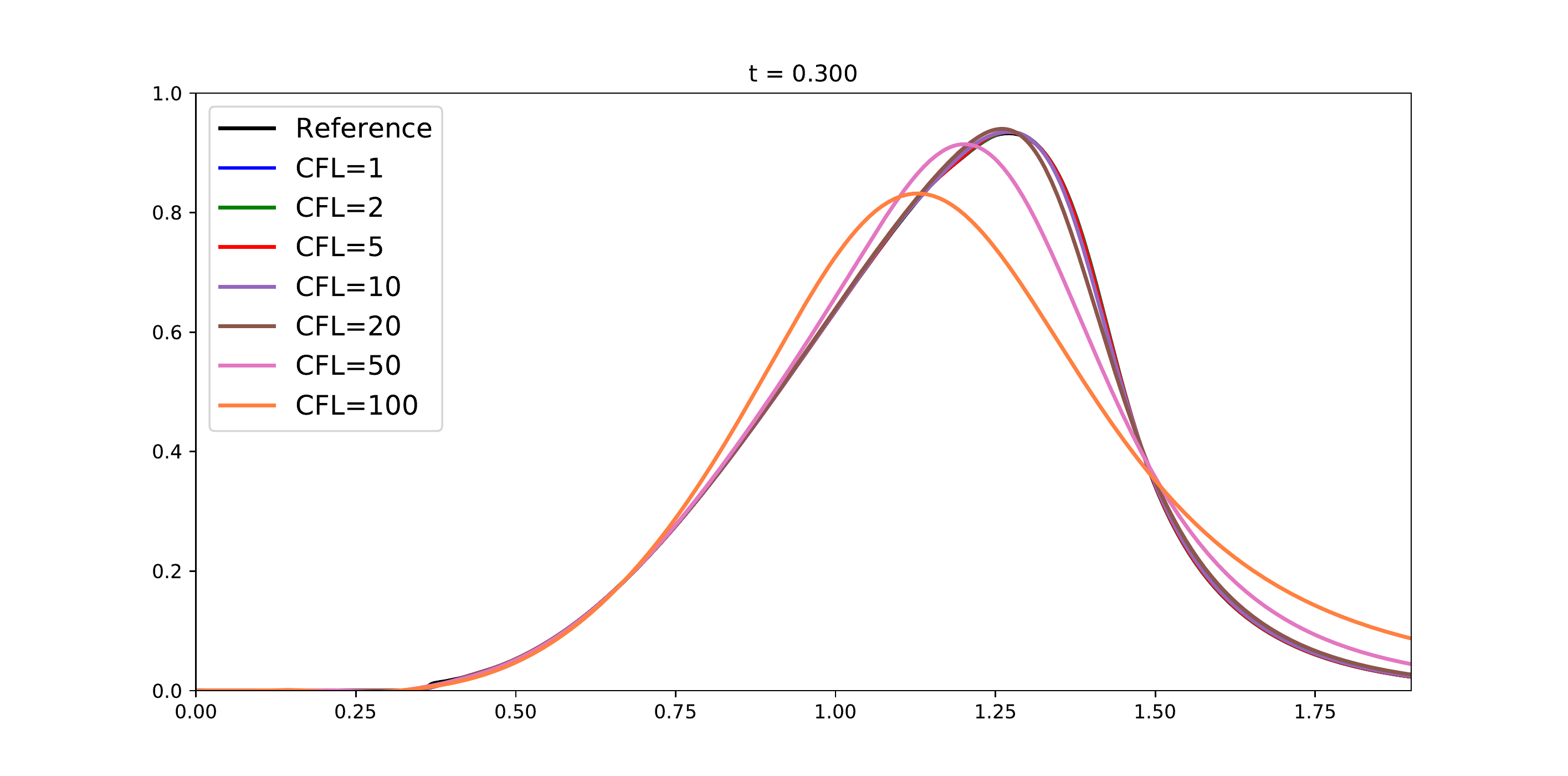}
   \includegraphics[width=0.65\textwidth]{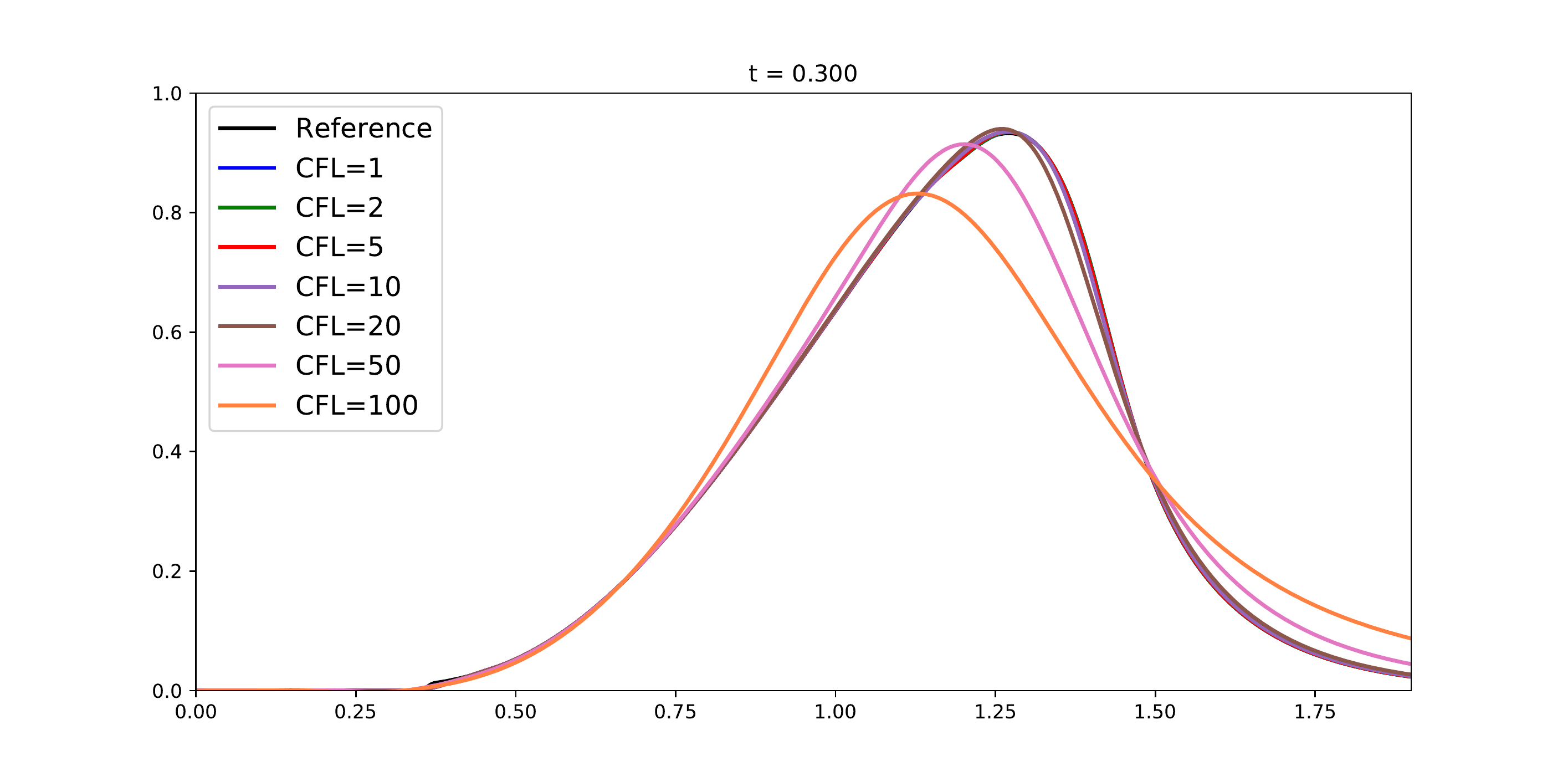}
     \caption{Burgers equation: Test 2. Differences between $e^x$ and the  numerical solutions obtained with IEWBM2 using different CFL values at $t=0.3$. Top: PWCR. Bottom:  PWLR.} 
      \label{burgers_difCFL_O2}
  \end{center}
 \end{figure}
 
Table \ref{burgers_wbpert} shows the $L^1$-differences between the underlying stationary solution and the numerical solutions obtained with IEWBM$p$, $p=1,2$ at time $t = 10$ (once the perturbation has left the computational domain) using a 400-cell mesh and CFL$=2$. Again, the stationary solution is recovered to machine precision.

\begin{table}[ht]
\centering
\begin{tabular}{|c|cc|} \hline
IEWBM1 & \multicolumn{2}{c|}{IEWBM2}  \\
 &   PWCR & PWLR \\ \hline
1.54e-13 & 1.32e-13 & 1.56e-13 \\\hline
\end{tabular}
 \caption{Burgers equation: Test 2. Differences in $L^1$-norm between the stationary and the numerical solution  at $t=10$ for IEWBM1 and IEWBM2 with piecewise contant (PWCR) or piecewise linear (PWL) reconstruction $\widetilde{Q}_i$.} 
\label{burgers_wbpert}
\end{table}

\subsubsection{Test 3: order test}
 
We consider now the initial condition:
$$u_0(x)=0.1e^{x}+0.5e^{-25(x-1.0)^2}.$$
A reference solution has been computed with IEWBM2 and the piecewise linear reconstruction using a 6400-cell mesh. Table \ref{burgers_order} shows the errors in $L^1$-norm with respect to the reference solution for EWBM1 and EWBM2 for both the piecewise constant and linear reconstructions: as it can be checked, the errors decrease with the number of cells at the expected convergence rate.

\begin{table}[ht]
\centering
\begin{tabular}{|c|cc|cc|cc|} \hline
Cells & \multicolumn{2}{c|}{IEWBM1}& \multicolumn{4}{c|}{IEWBM2}\\ \hline
 & & & \multicolumn{2}{c|}{PWCR} &\multicolumn{2}{c|}{PWLR}\\

  & Error&Order & Error&Order &Error& Order \\\hline
  
25& 1.67e-02 & - & 1.81e-03 & - & 1.81e-03 & -  \\
50& 7.42e-02 & 1.17& 4.76e-04 & 1.93 &  4.76e-04  & 1.93 \\
100& 4.79e-03 & 0.63 & 1.06e-04 & 2.16 &  1.06e-04 & 2.16 \\
200&   2.74e-02 & 0.81 & 2.99e-05 & 1.83 &   2.99e-05 & 1.83 \\
400&   1.48e-03 & 0.89 & 8.00e-06 & 1.90 &    8.00e-03 & 1.90 \\
800&  7.69e-04 & 0.95& 2.02e-06 & 1.99 &    2.02e-06 & 1.99 \\
1600& 3.93e-04 & 0.97 & 4.85e-07 & 2.00 &    4.85e-07 & 2.06 \\
 \hline
\end{tabular}
\caption{Burgers equation: Test 3. Differences in $L^1$-norm between the reference and the numerical solutions  at $t=0.5$ for IEWBM1 and IEWBM2 with piecewise contant (PWCR) or piecewise linear (PWLR) reconstruction $\widetilde{Q}_i$.} 
\label{burgers_order}
\end{table}

\subsection{Shallow water equations}
Let us consider now the 1d shallow water system with Manning friction
$$
U_t+f(U)_x=S_1(U)H_{x}+S_2(U),
$$
where
$$
U=\begin{pmatrix}
h \\
q \\
\end{pmatrix}, \quad f(U)=\begin{pmatrix}
q \\
\displaystyle \frac{q^2}{h}+\displaystyle \frac{g}{2}h^2\\
\end{pmatrix}, \quad S_1(U)=\begin{pmatrix}
0\\
gh\\
\end{pmatrix}, \quad S_2(U)=\begin{pmatrix}
0\\
 - \displaystyle \frac{k q |q|}{h^{\mu}}\\
\end{pmatrix}.
$$
The variable $x$ makes reference to the axis of the channel and $t$ is the time; $q(x,t)$ and $h(x,t)$ are the discharge and the thickness, respectively; $g$ is the gravity; $H(x)$ is the depth function measured from a fixed reference level; $k$ is the Manning friction coefficient and $\mu$ is set to  $ \frac{7}{3}$.

In order to make the numerical experiments more realistic, we adopt dimensional units, in which time is measured in seconds, and space is measured in meters.

The eigenvalues of the system are 
$$\lambda^\pm=u\pm c, $$
with $c = \sqrt{gh}$. The flow regime is characterized by the Froude number: 
\begin{equation}
Fr(U)= \displaystyle \frac{|u|}{c}.
\end{equation}
The flow is subcritical if $Fr<1$, critical if $Fr=1$ or supercritical if $Fr>1$.

The stationary solutions satisfy the ODE system 
\begin{equation}\label{u'=Gswf}
\begin{cases}
\left( - {u^2}+ gh \right) h_x= ghH_{x}- \displaystyle \frac{k q |q|}{h^{\mu}} ,\\
q_x=0.
\end{cases}
\end{equation}
Since the explicit expression of the stationary solutions is not available, well-balanced methods will be designed in which RK-collocation methods are used to approximate them. Different schemes are considered:
    \begin{itemize}
        \item Fully implicit schemes, where the flux and the source term are treated implicitly.
        \item Semi-implicit schemes for problems without  friction, in which the advection term $\left(\displaystyle \frac{q^2}{h}\right)$ is treated explicitly and the equation of $h$, the pressure term $\left(\displaystyle\frac{1}{2} g h^2\right)$ and the source term are treated implicitly.
        \item Semi-implicit schemes for problems with friction, in which only the source term $\left( - \displaystyle \frac{k q |q|}{h^{\mu}}\right)$ is treated implicitly.
    \end{itemize}
  While the implementation of the two first types of methods above lead to solve coupled nonlinear algebraic systems at every stage of the RK solvers, in the case of the third types, local nonlinear equations have to be solved at every cell. Fixed-point iterations are considered in all cases.
 \subsubsection{Test 1: stationary solution}
Let us first check the well-balanced property of the methods for the model without friction ($k=0$). We consider $x \in [0,3]$, $t \in [0,1]$ and CFL$=2.0$. As initial condition, we consider the subcritical stationary solution which solves the Cauchy problem: 
\begin{equation*}
\begin{cases}
q_x=0,\\
(\displaystyle - u^2 + gh) h_x= ghH_x,\\
h(0)=2.0+H(0), \, q(0)=3.5,\\
\end{cases}
\end{equation*}
where the depth function is given by
\begin{equation}\label{sw_fondo_subcrit}
H(x)= \left\{ \begin{array}{lcc}
             -0.25(1+\cos(5 \pi (x+0.5))) &   \text{if}  & 1.3 \leq x \leq 1.7, \\
             \\ \hspace{20mm} 0 &  &\text{otherwise}, 
             
             \end{array}
   \right.
\end{equation}
(see Figure \ref{sw_wbcheck_cini}). Table \ref{sw_wbcheck_errors} shows the $L^1$ errors between the initial and final cell-averages for IWBM$p$, SIWBM$p$, $p=1,2$, using a 200-cell mesh.

 \begin{figure}[ht]
 \begin{center}
   \includegraphics[width=0.8\textwidth]{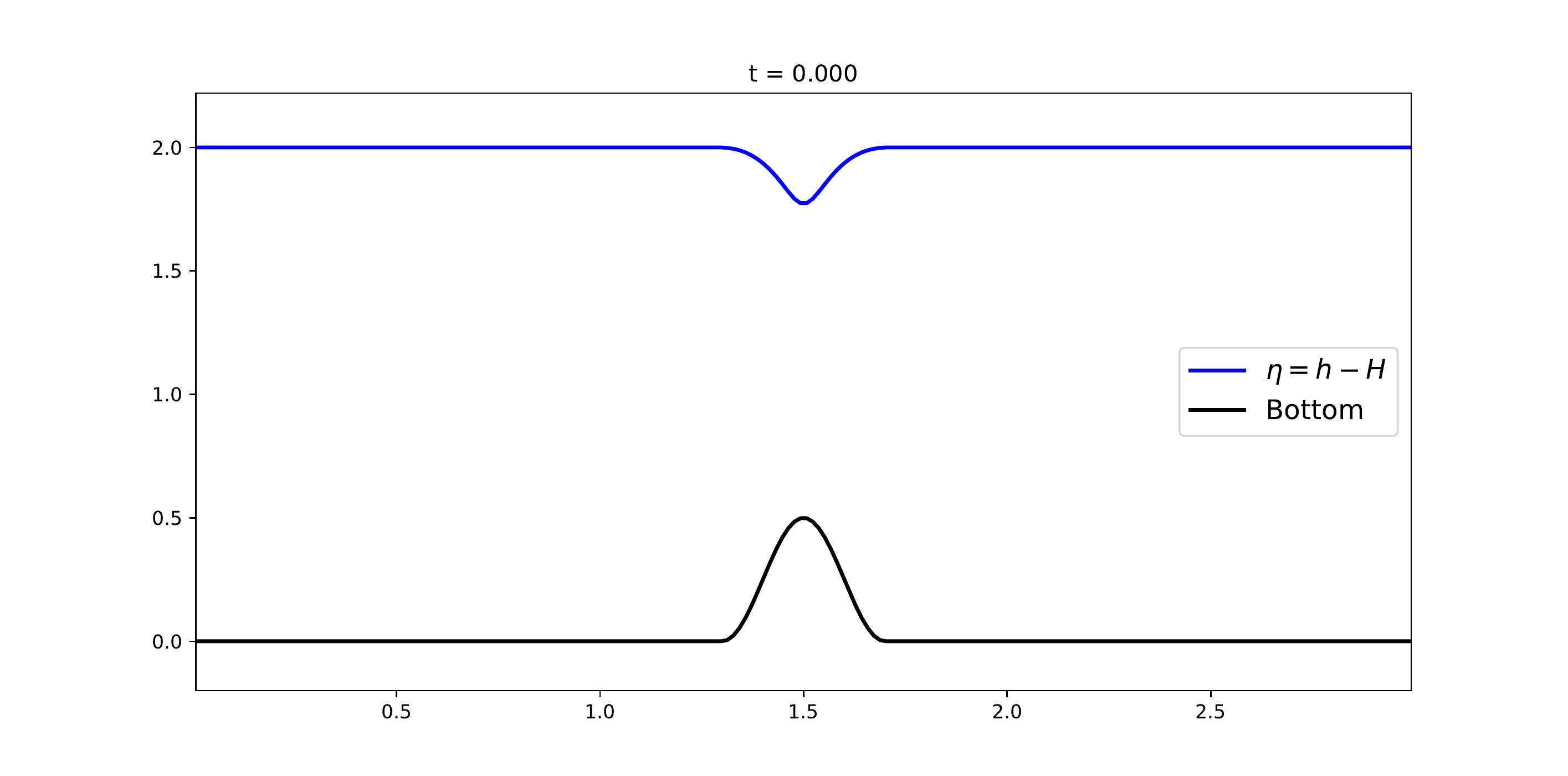}
     \caption{Shallow water equations without friction: Test 1. Initial condition: subcritical stationary solution.} 
     \label{sw_wbcheck_cini}
  \end{center}
 \end{figure}
    
  \begin{table}[ht]
\centering
\begin{tabular}{|cc|cccc|} \hline\multicolumn{6}{|c|}{\textbf{Implicit methods}}\\\hline
\multicolumn{2}{|c|}{IWBM1} & \multicolumn{4}{c|}{IWBM2}  \\
\multicolumn{2}{|c|} {  }&   \multicolumn{2}{c}{PWCR} & \multicolumn{2}{c|}{PWLR} \\ \hline
h& q & h & q &h & q \\\hline
5.33e-15 & 4.88e-15 & 3.55e-15  & 7.55e-15 & 3.55e-15 & 6.22e-15 \\\hline
 \multicolumn{6}{c}{}\\
 \multicolumn{6}{c}{}\\\hline
\multicolumn{6}{|c|}{\textbf{Semi-implicit methods}}\\\hline
\multicolumn{2}{|c|}{SIWBM1} & \multicolumn{4}{c|}{SIWBM2}  \\
\multicolumn{2}{|c|} {  }&   \multicolumn{2}{c}{PWCR} & \multicolumn{2}{c|}{PWLR} \\ \hline
h& q & h & q &h & q \\\hline
2.00e-15 & 7.11e-15 & 5.11e-15  & 8.00e-15 & 5.55e-15 & 8.44e-15 \\\hline
\end{tabular}
 \caption{ Shallow water equations without friction: Test 1. Differences in $L^1$-norm between the stationary and the numerical solution  at $t=1$ for IWBM1, SIWBM1, IWBM2 and SIWBM2 with piecewise contant (PWCR) or piecewise linear (PWLR) reconstruction $\widetilde{Q}_i$ for a 200-cell mesh.} 
\label{sw_wbcheck_errors}
\end{table}  
  
 \subsubsection{Test 2: order test}  
We now simulate a perturbation of a non-stationary smooth solution for the model without friction ($k=0$). In particular, we consider $x \in [-5,5]$, $t \in [0,2]$ CFL$=2.0$, and the depth function:
\begin{equation}\label{sw_fondogaussiana}
H(x)= 1.0-0.5e^{-x^2}.
\end{equation}
The initial condition is given by:
$$
q_0(x)=0, \quad h_0(x)= 0.1e^{-5.0x^2},
$$
(see Figure \ref{sw_gauss_cini}). A 200-cell mesh is considered and a reference solution with a 1600-cell mesh using IWBM2 with the piecewise linear reconstruction has been obtained. The different implicit and semi-implicit methods have been compared (see Figure \ref{sw_gauss_todos}). 
 As expected, the numerical solutions obtained with first-order schemes are more diffusive than those given by second-order methods. Moreover, semi-implicit methods give better results than fully implicit schemes in the first-order case. Concerning the second-order schemes, the piecewise constant and linear reconstructions give similar results. No spurious oscillations appear for CFL$=2$.

\begin{figure}[ht]
 \begin{center}
   \includegraphics[width=0.8\textwidth]{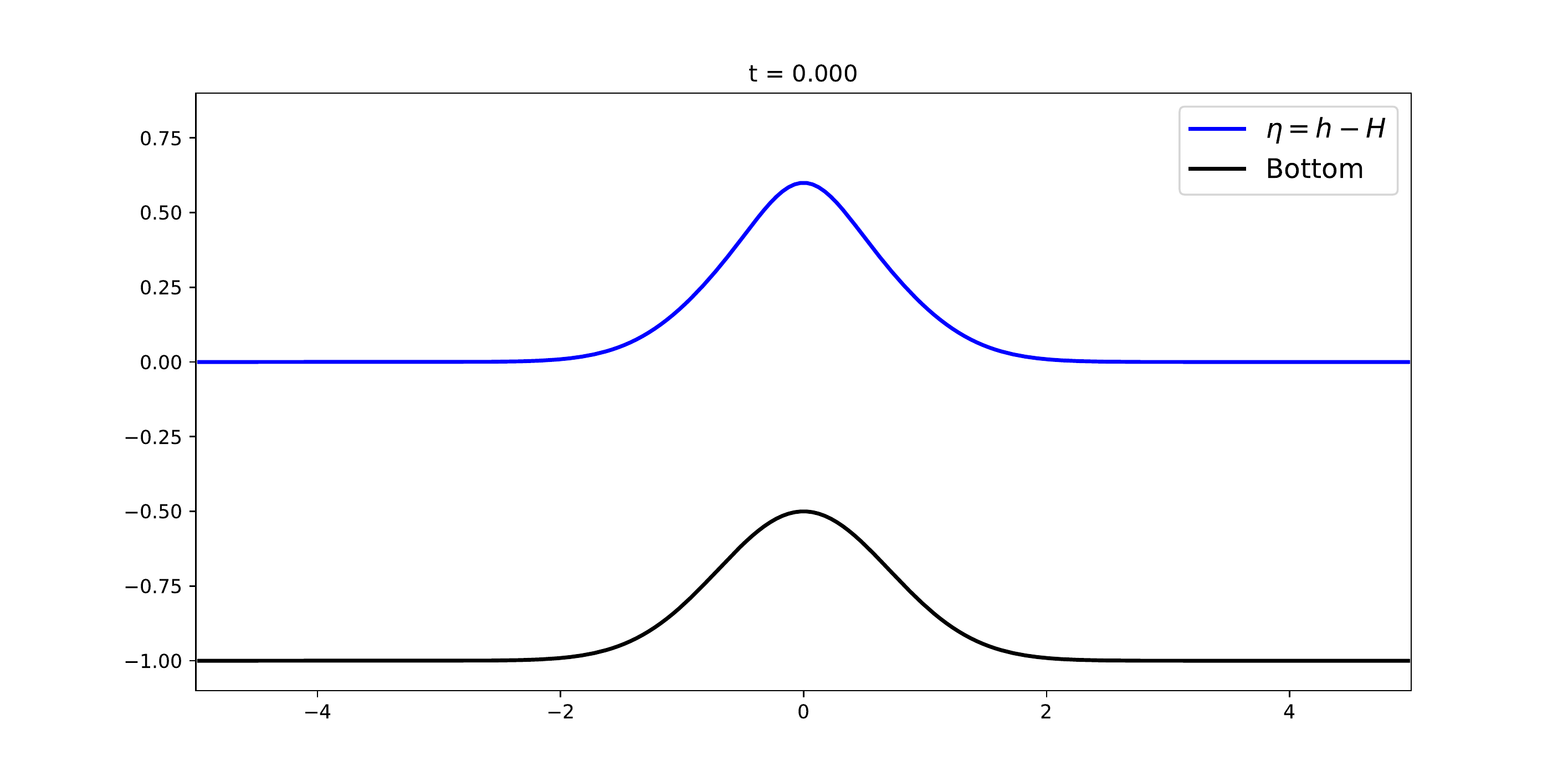}
     \caption{Shallow water equations without friction: Test 2. Initial condition.
      }
     \label{sw_gauss_cini}
  \end{center}
 \end{figure}

  \begin{figure}[ht]
 \begin{center}
   \includegraphics[width=0.49\textwidth]{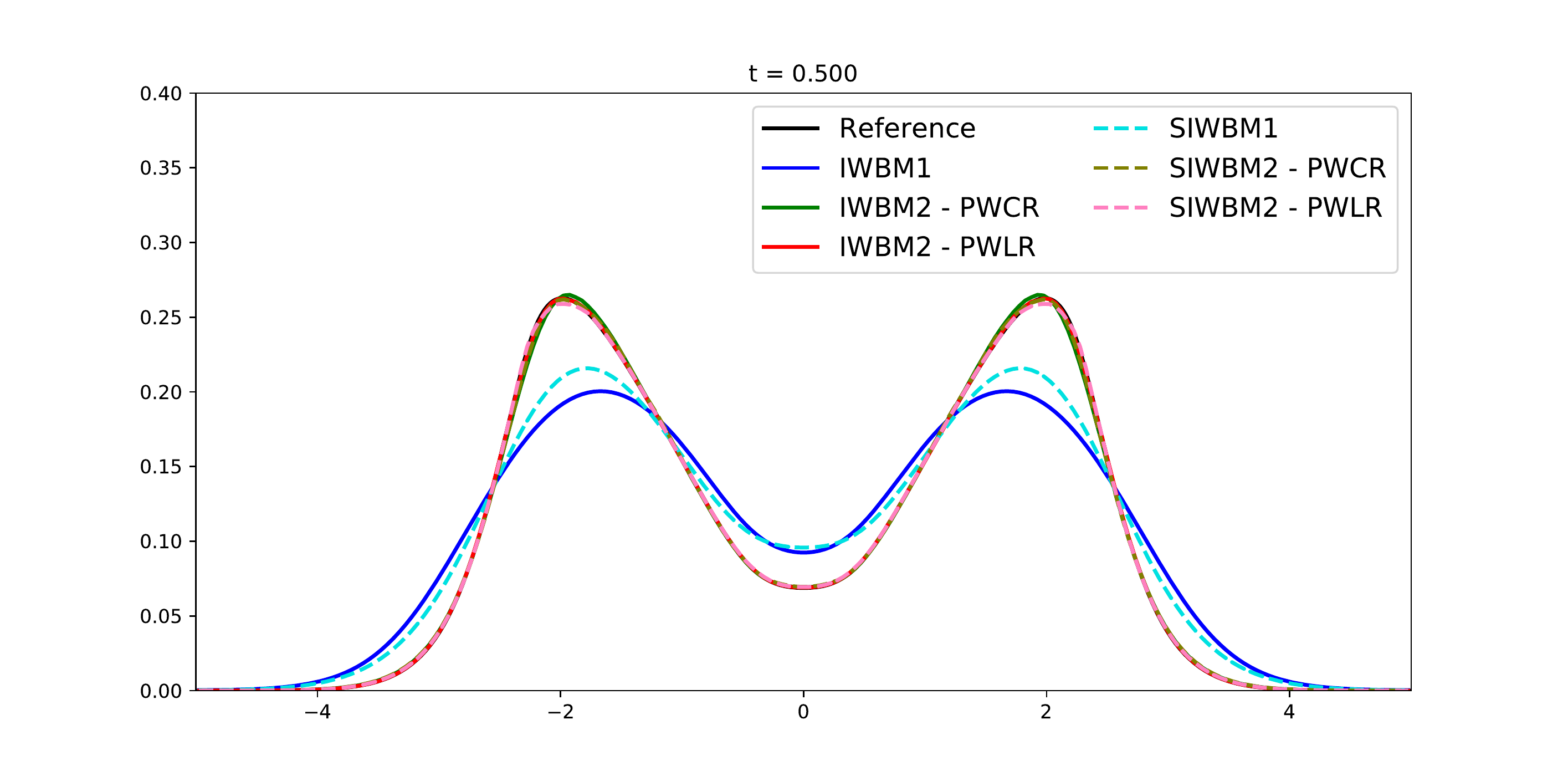}
   \includegraphics[width=0.49\textwidth]{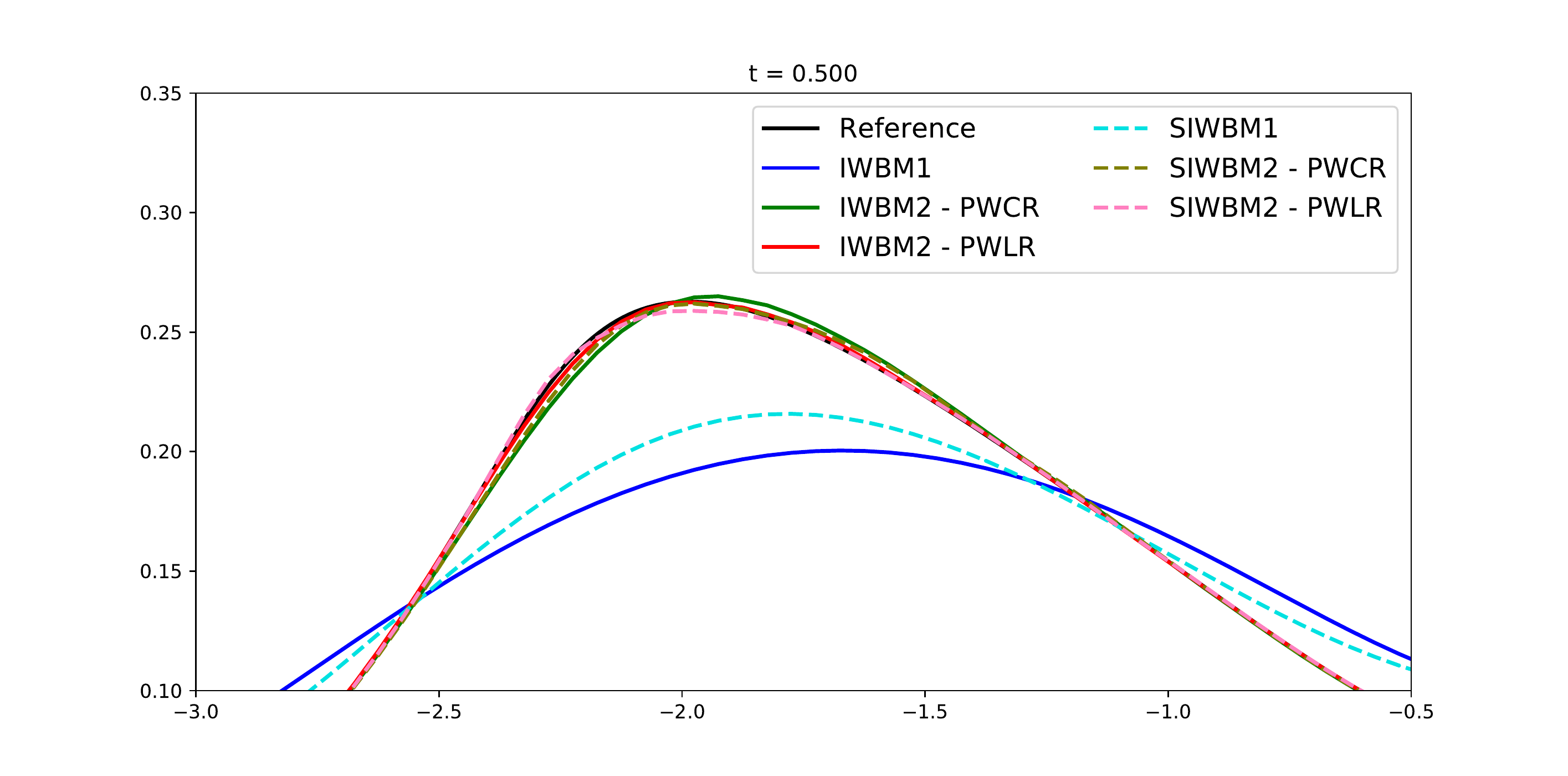}
   \includegraphics[width=0.49\textwidth]{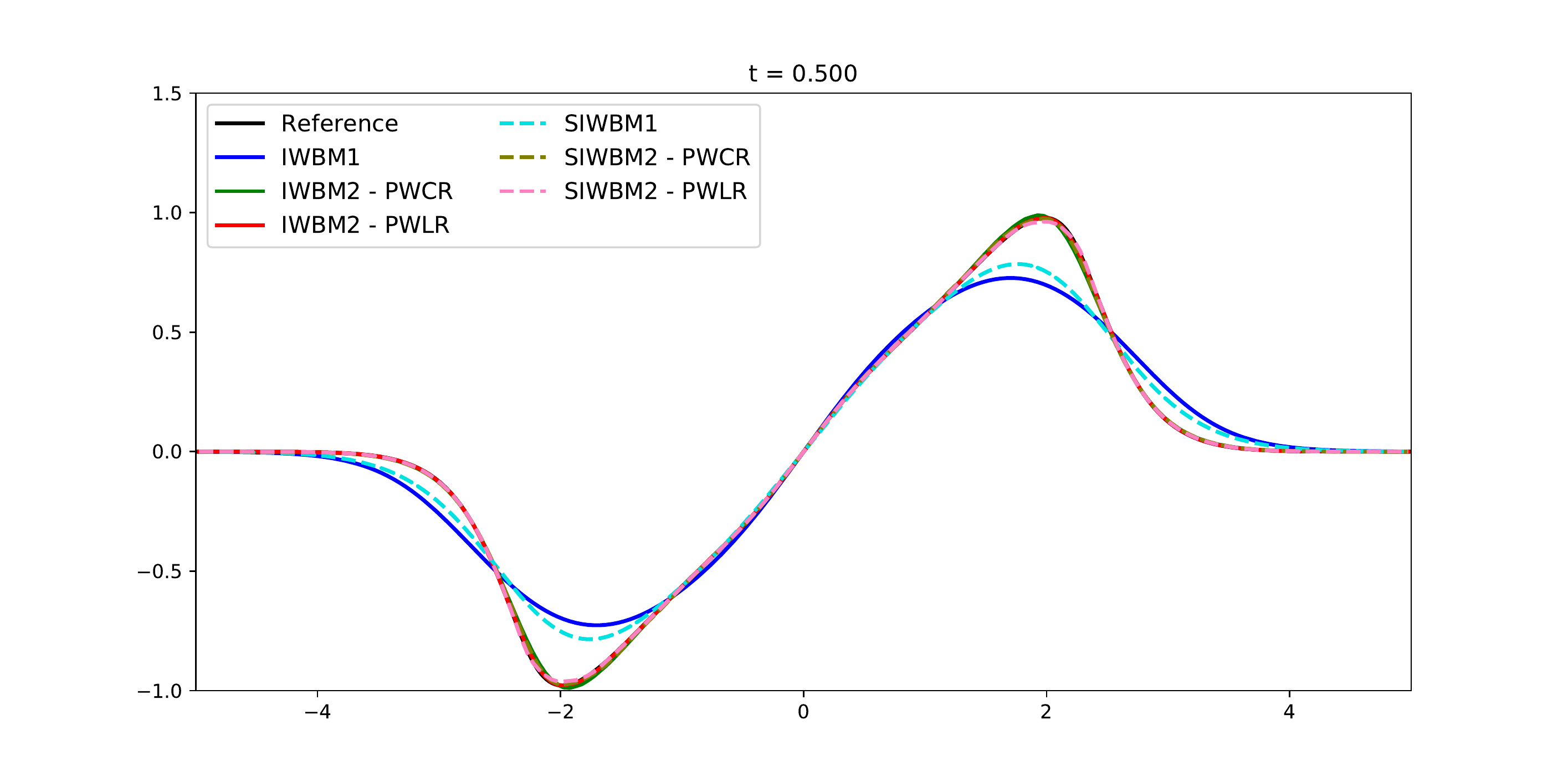}
   \includegraphics[width=0.49\textwidth]{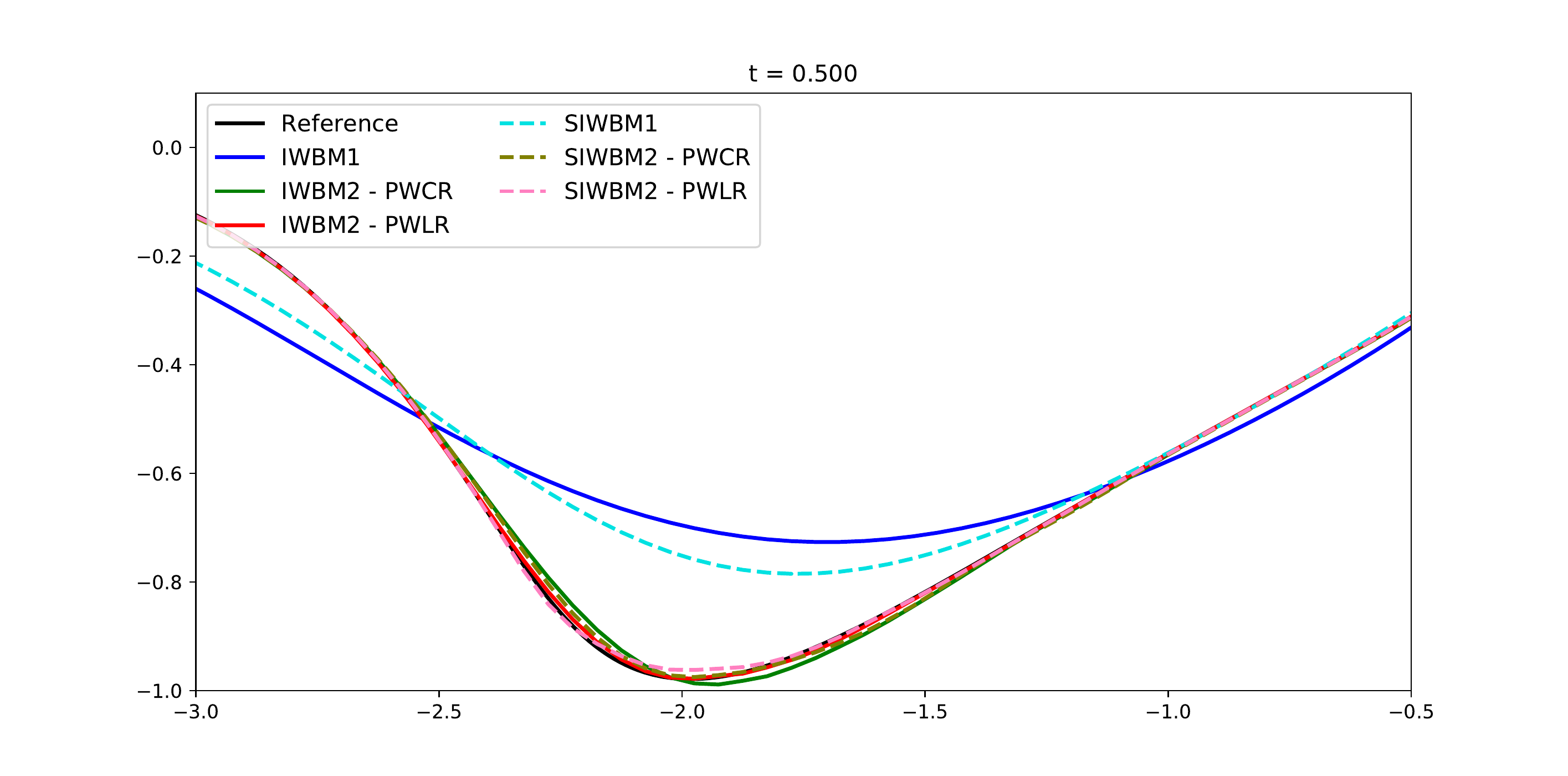}
     \caption{Shallow water equations without friction: Test 2. Numerical solutions for IWBM$p$, SIWBM$p$, $p=1,2$ with CFL$=2$ at $t=0.5$. Top: $\eta$. Bottom: $q$. } 
     \label{sw_gauss_todos}
  \end{center}
 \end{figure}
 
Tables \ref{sw_gauss_order_I_h}-\ref{sw_gauss_order_I_q}-\ref{sw_gauss_order_SI_h}-\ref{sw_gauss_order_SI_q} show the order for the implicit and semi-implicit methods. The expected convergence rates have been obtained for both variables $h$ and $q$.

\begin{table}[ht]
\centering
\begin{tabular}{|c|cc|cc|cc|} \hline
Cells & \multicolumn{2}{c|}{IWBM1}& \multicolumn{4}{c|}{IWBM2}\\ \hline
 & & & \multicolumn{2}{c|}{PWCR} &\multicolumn{2}{c|}{PWLR}\\

  & Error&Order & Error&Order &Error& Order \\\hline
  
25& 2.60e-01 & - & 2.90e-01 & - & 1.57e-01 & -  \\
50& 2.32e-01 & 0.16& 1.31e-01 & 1.14 &  4.91e-02  & 1.68 \\
100& 2.06e-01 & 0.17 & 4.90e-02 & 1.42 &  1.37e-02 & 1.84 \\
200&   9.88e-01 & 1.06 & 1.42e-01 & 1.78 &   3.52e-03 & 1.96 \\
400&   4.20e-02 & 1.23 & 3.73e-03 & 1.94 &    8.48e-04 & 2.05 \\
 \hline
\end{tabular}
\caption{Shallow water equations without friction: Test 2.  Differences in $L^1$-norm with respect to the reference solution and convergence rates for $h$ at $t=0.5$ for IWBM1 and IWBM2 with piecewise constant (PWCR) or picewise linear (PWLR) reconstruction $\widetilde Q_i$ for the thickness $h$.} 
\label{sw_gauss_order_I_h}
\end{table}

\begin{table}[ht]
\centering
\begin{tabular}{|c|cc|cc|cc|} \hline
Cells & \multicolumn{2}{c|}{IWBM1}& \multicolumn{4}{c|}{IWBM2}\\ \hline
 & & & \multicolumn{2}{c|}{PWCR} &\multicolumn{2}{c|}{PWLR}\\

  & Error&Order & Error&Order &Error& Order \\\hline
  
25& 1.35 & - & 1.17 & - & 5.55e-01 & -  \\
50& 1.04 & 0.37& 5.62e-01 & 1.05 &  2.04e-01  & 1.45 \\
100& 7.38-01 & 0.40 & 1.92e-01 & 1.55 &  5.56e-02 & 1.87 \\
200&   3.98e-01 & 0.89 & 5.72e-02 & 1.74 &   1.44e-02 & 1.95 \\
400&   1.95e-01 & 1.03 & 1.51e-02 & 1.92 &    3.48e-03 & 2.05 \\
 \hline
\end{tabular}
\caption{Shallow water equations without friction: Test 2.  Differences in $L^1$-norm with respect to the reference solution and convergence rates for $h$ at $t=0.5$ for IWBM1 and IWBM2 with piecewise constant (PWCR) or picewise linear (PWLR) reconstruction $\widetilde Q_i$ for the discharge $q$.} 
\label{sw_gauss_order_I_q}
\end{table}

\begin{table}[ht]
\centering
\begin{tabular}{|c|cc|cc|cc|} \hline
Cells & \multicolumn{2}{c|}{SIWBM1}& \multicolumn{4}{c|}{SIWBM2}\\ \hline
 & & & \multicolumn{2}{c|}{PWCR} &\multicolumn{2}{c|}{PWLR}\\

  & Error&Order & Error&Order &Error& Order \\\hline
  
25& 4.82e-01 & - & 1.41e-01 & - & 1.14e-01 & -  \\
50& 3.70e-01 & 0.38& 5.34e-02 & 1.40 &  2.86e-02  & 1.99 \\
100& 2.24e-01 & 0.72 & 1.72e-02 & 1.63 &  6.33e-03 & 2.18 \\
200&   1.39e-01 & 0.68 & 4.55e-03 & 1.92 &   1.53e-03 & 2.05 \\
400&   7.38e-02 & 0.92 & 1.16e-03 & 1.97 &    3.62e-04 & 2.08 \\
 \hline
\end{tabular}
\caption{Shallow water equations without friction: Test 2.  Differences in $L^1$-norm with respect to the reference solution and convergence rates for $h$ at $t=0.5$ for SIWBM1 and SIWBM2 with piecewise constant (PWCR) or picewise linear (PWLR) reconstruction $\widetilde Q_i$ for the thickness $h$.}  
\label{sw_gauss_order_SI_h}
\end{table}

\begin{table}[ht]
\centering
\begin{tabular}{|c|cc|cc|cc|} \hline
Cells & \multicolumn{2}{c|}{SIWBM1}& \multicolumn{4}{c|}{SIWBM2}\\ \hline
 & & & \multicolumn{2}{c|}{PWCR} &\multicolumn{2}{c|}{PWLR}\\

  & Error&Order & Error&Order &Error& Order \\\hline
  
25& 1.74 & - & 6.10e-01 & - & 3.33e-01 & -  \\
50& 1.47 & 0.24& 2.23e-01 & 1.45 &  9.40e-02  & 1.83 \\
100& 9.83e-01 & 0.74 & 6.88e-02 & 1.69 &  2.25e-02 & 2.06 \\
200&   5.83e-01 & 0.60 & 1.84e-02 & 1.91 &   5.64e-03 & 2.00 \\
400&   3.01e-01 & 0.95 & 4.69e-02 & 1.97 &    1.35e-03 & 2.07 \\
 \hline
\end{tabular}
\caption{Shallow water equations without friction: Test 2.  Differences in $L^1$-norm with respect to the reference solution and convergence rates for $h$ at $t=0.5$ for IWBM1 and IWBM2 with piecewise constant (PWCR) or picewise linear (PWLR) reconstruction  $\widetilde Q_i$ for the discharge $q$.
}  
\label{sw_gauss_order_SI_q}
\end{table}

As expected, second-order schemes which make use of piecewise linear reconstruction for $\tilde{Q}_i$ are systematically more accurate than the ones based on piecewise constant reconstruction, even if the order of accuracy is the same. It is interesting to observe that semi-implicit schemes are more accurate than fully implicit schemes and even of explicit schemes. This makes semi-implicit schemes the most cost-effective since they are more accurate and less expensive than fully implicit schemes, and they allow larger CFL numbers and are even more accurate than explicit schemes. 
Fully implicit schemes are more dissipative than semi-implicit schemes, which explains the higher accurate for the same grid and CFL number. Furthermore, for low Froude number flow, they are also less dissipative than explicit schemes. The reason is that explicit schemes have to use a larger numerical viscosity than semi-implicit ones in the Rusanov numerical flux function. This effect is  well known and discussed in the detail in the literature for numerical methods for all Mach-number flows (see for example 
\cite{dellacherie2010analysis} for the analysis of the phenomenon, \cite{boscarino2018all,
arun2012asymptotic, avgerinos2019linearly} for second order accurate finite volume schemes to treat all Mach number flows in compressible Euler equations, \cite{bosscarino2022lowmach} for high order schemes, and \cite{miczek2015new} for applications in astrophysics).

\subsubsection{Test 3: shock waves}  
We consider again the model without friction ($k=0$) with a discontinuous initial condition that generates two shock waves traveling in oposite directions.  We consider the space interval  $[-5, 5]$ and the time interval  $[0,0.5]$ and CFL=2. The depth function is again given by \eqref{sw_fondogaussiana}.
As initial condition, we consider the functions:
$$
q_0(x)=0, \quad h_0(x)=\left\{ \begin{array}{lc}
             H(x) &   \text{if  $|x|\geq 1$;} \\
             \\ H(x)+0.1 &  \text{if $ |x|<1 $;} \\
             \end{array}
   \right.
$$
(see Figure \ref{sw_shock}).

\begin{figure}[ht]
 \begin{center}
   \includegraphics[width=0.8\textwidth]{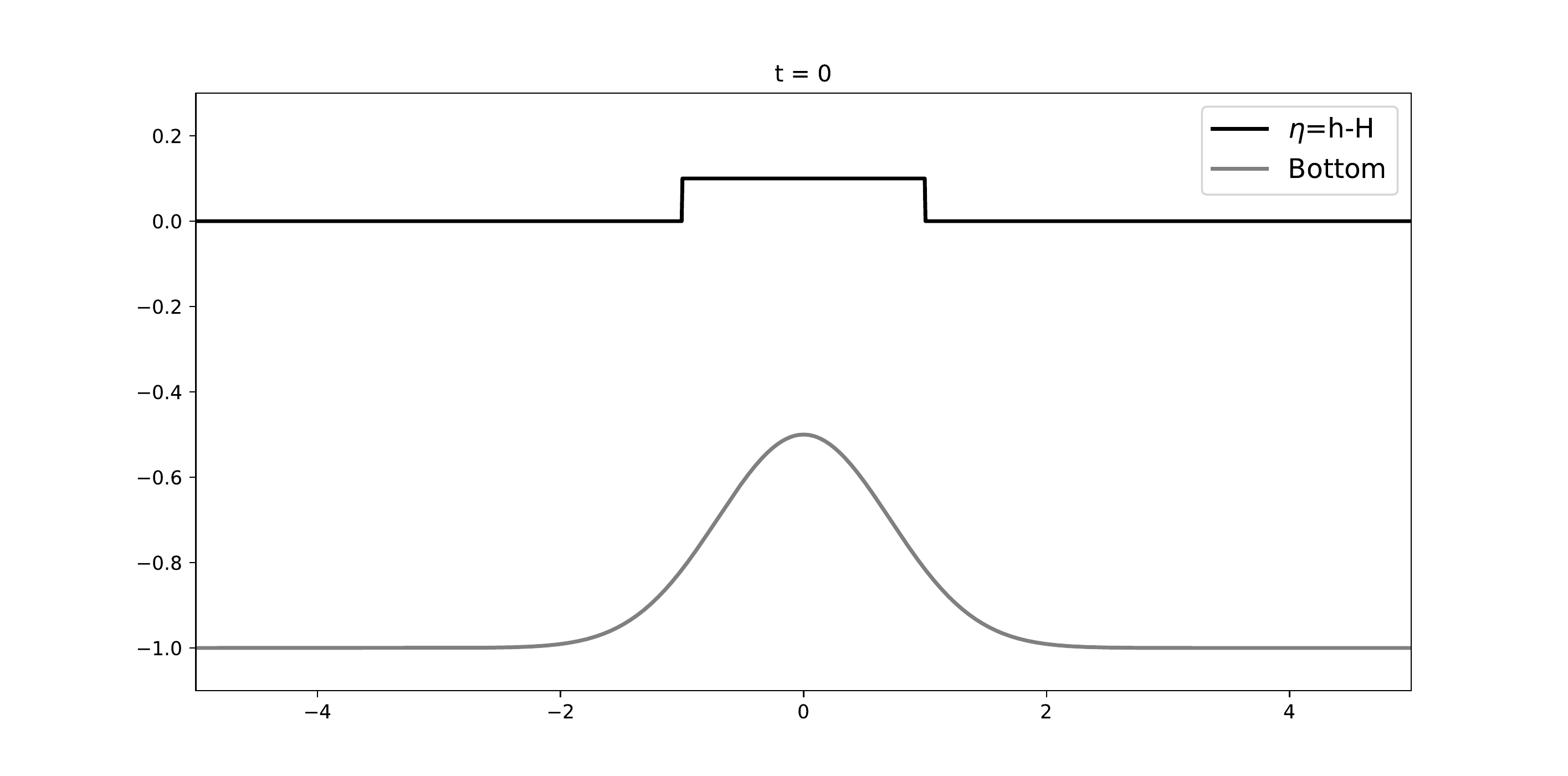}
     \caption{ Shallow water equations without friction: Test 3. Initial condition.}
     \label{sw_shock}
  \end{center}
 \end{figure}
 
A reference solution with a 1600-cell mesh using the SIWBM2 with piecewise linear reconstruction for $\widetilde Q_i$ has been obtained. Figures \ref{sw_shock_t02}-\ref{sw_shock_t05} show the numerical solutions obtained with the different methods at times $t=0.2, 0.5$.

\begin{figure}[ht]
 \begin{center}
   \includegraphics[width=0.8\textwidth]{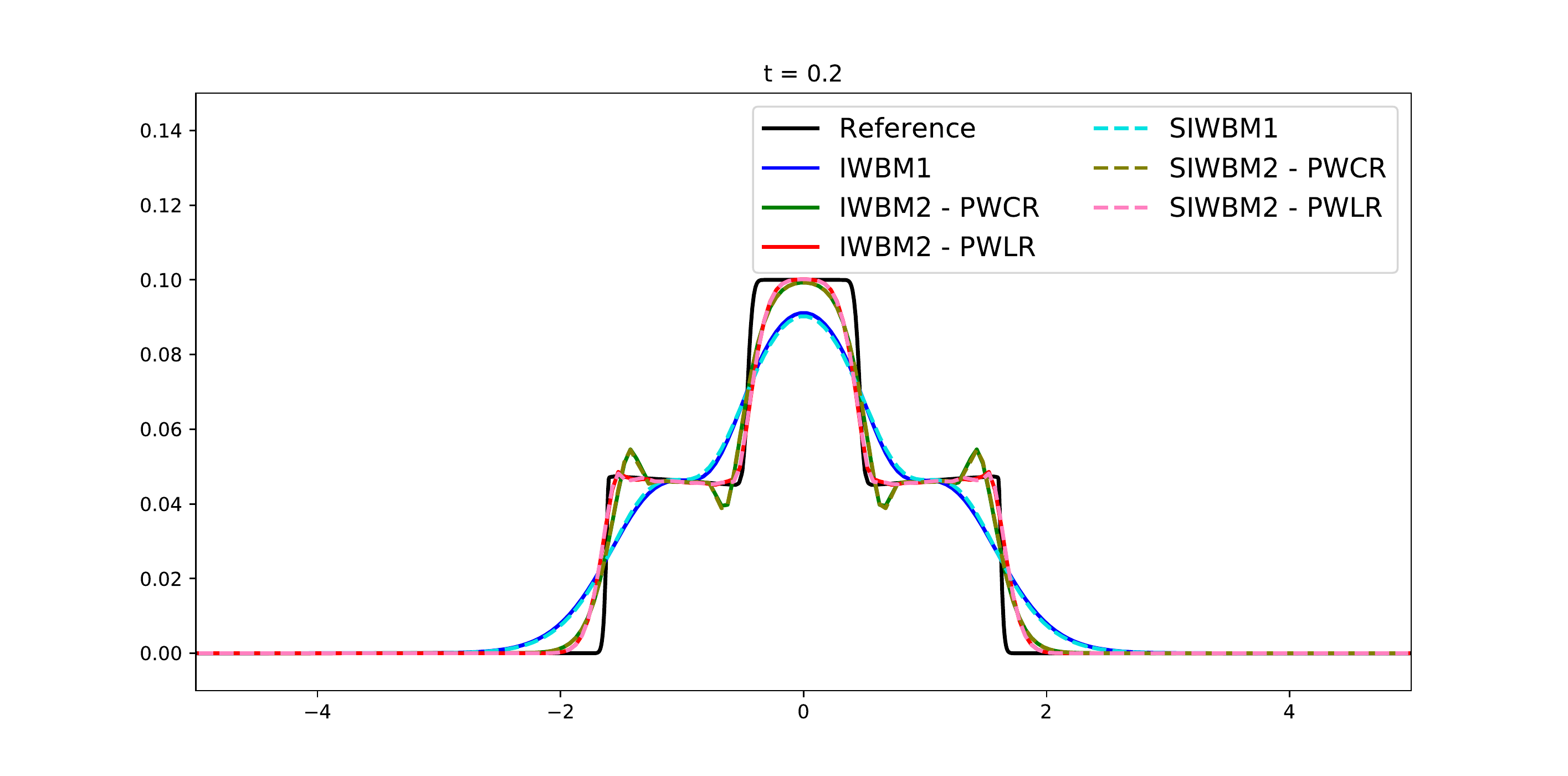}
   \includegraphics[width=0.8\textwidth]{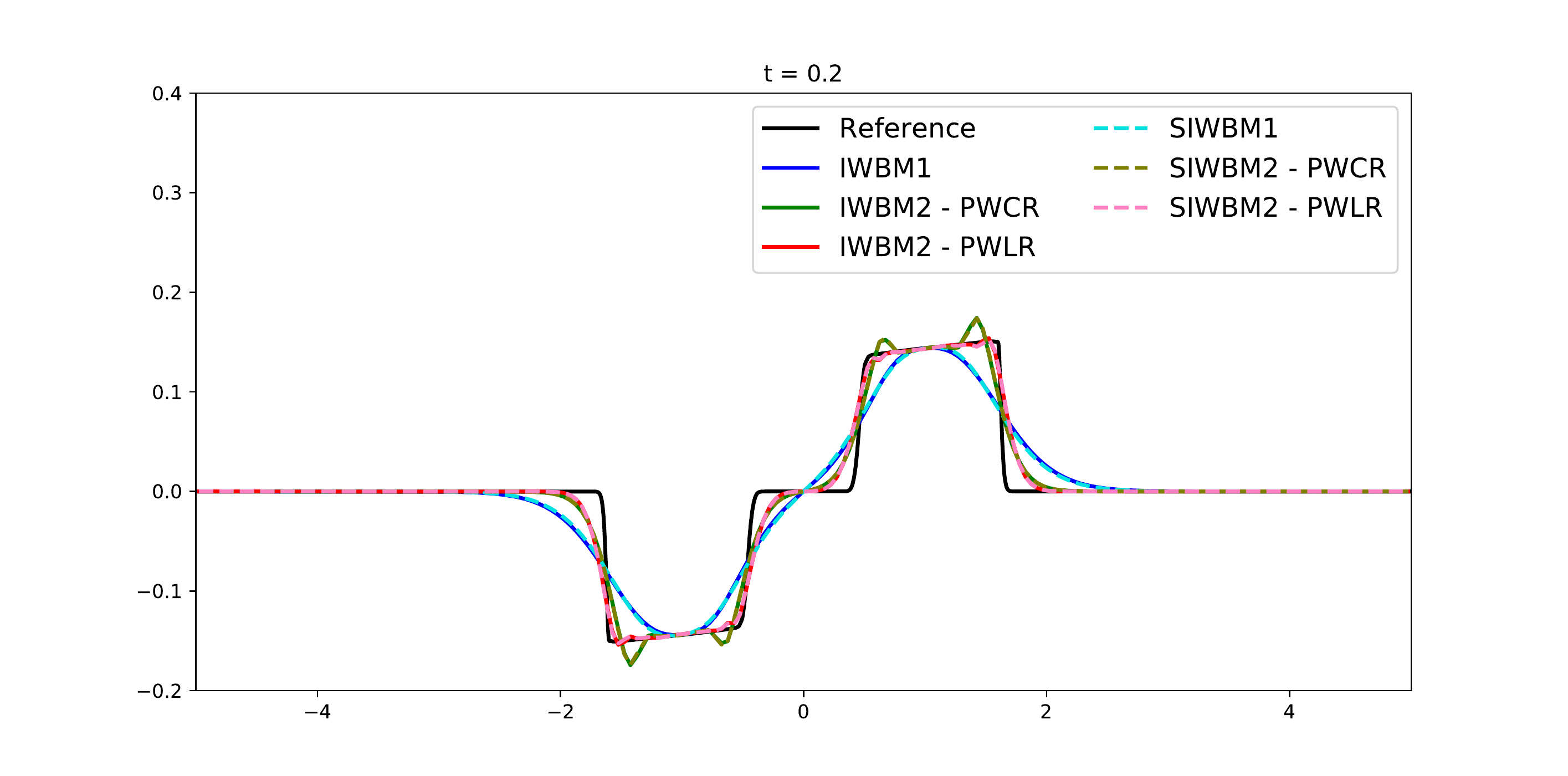}
     \caption{  Shallow water equations without friction: Test 3. Numerical solutions for IWBM$p$, SIWBM$p$, $p=1,2$ with CFL$=2$ at $t=0.2$. Top: $\eta$. Bottom: $q$.} 
     \label{sw_shock_t02}
  \end{center}
 \end{figure}

 \begin{figure}[ht]
 \begin{center}
   \includegraphics[width=0.8\textwidth]{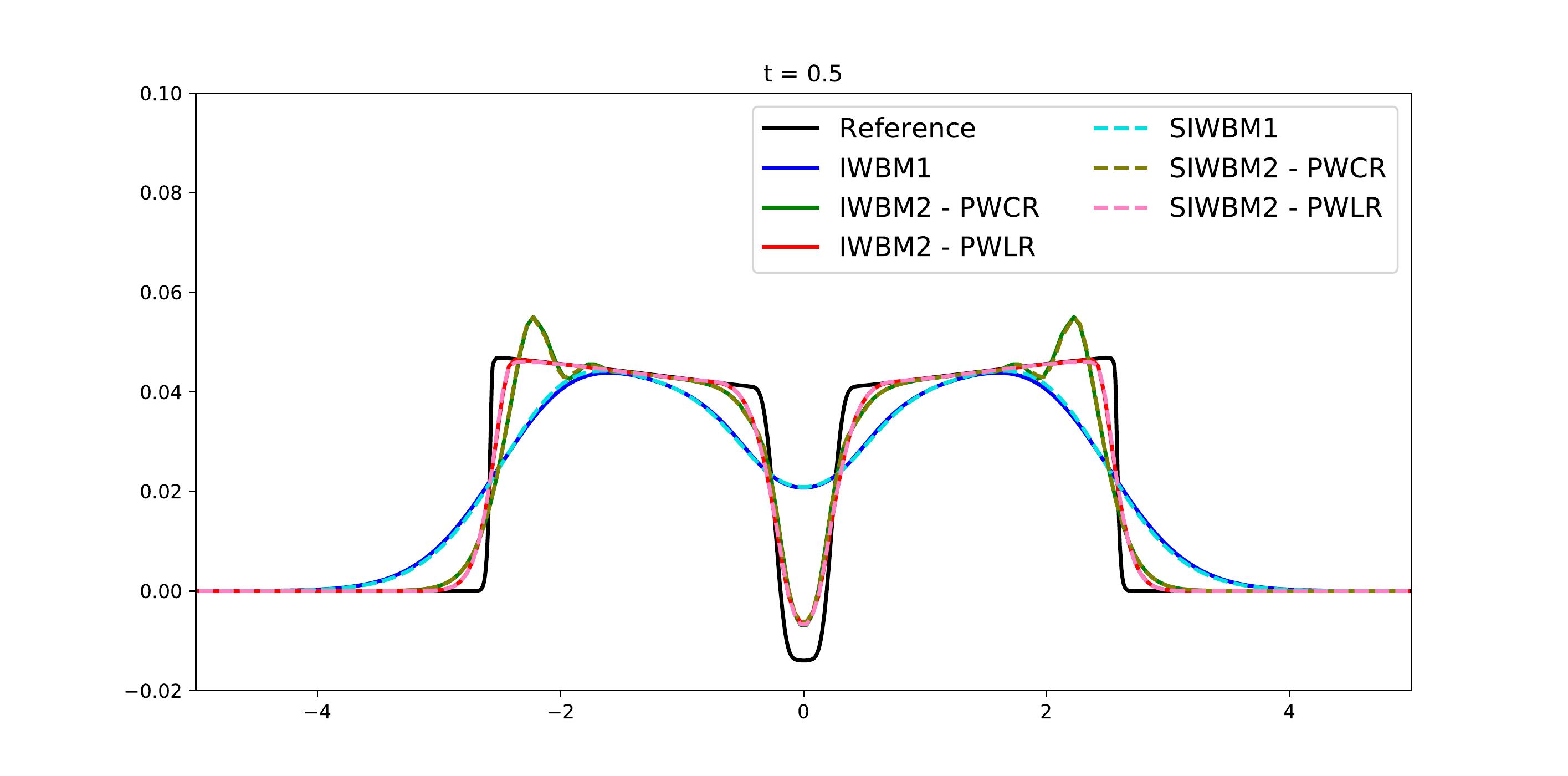}
   \includegraphics[width=0.8\textwidth]{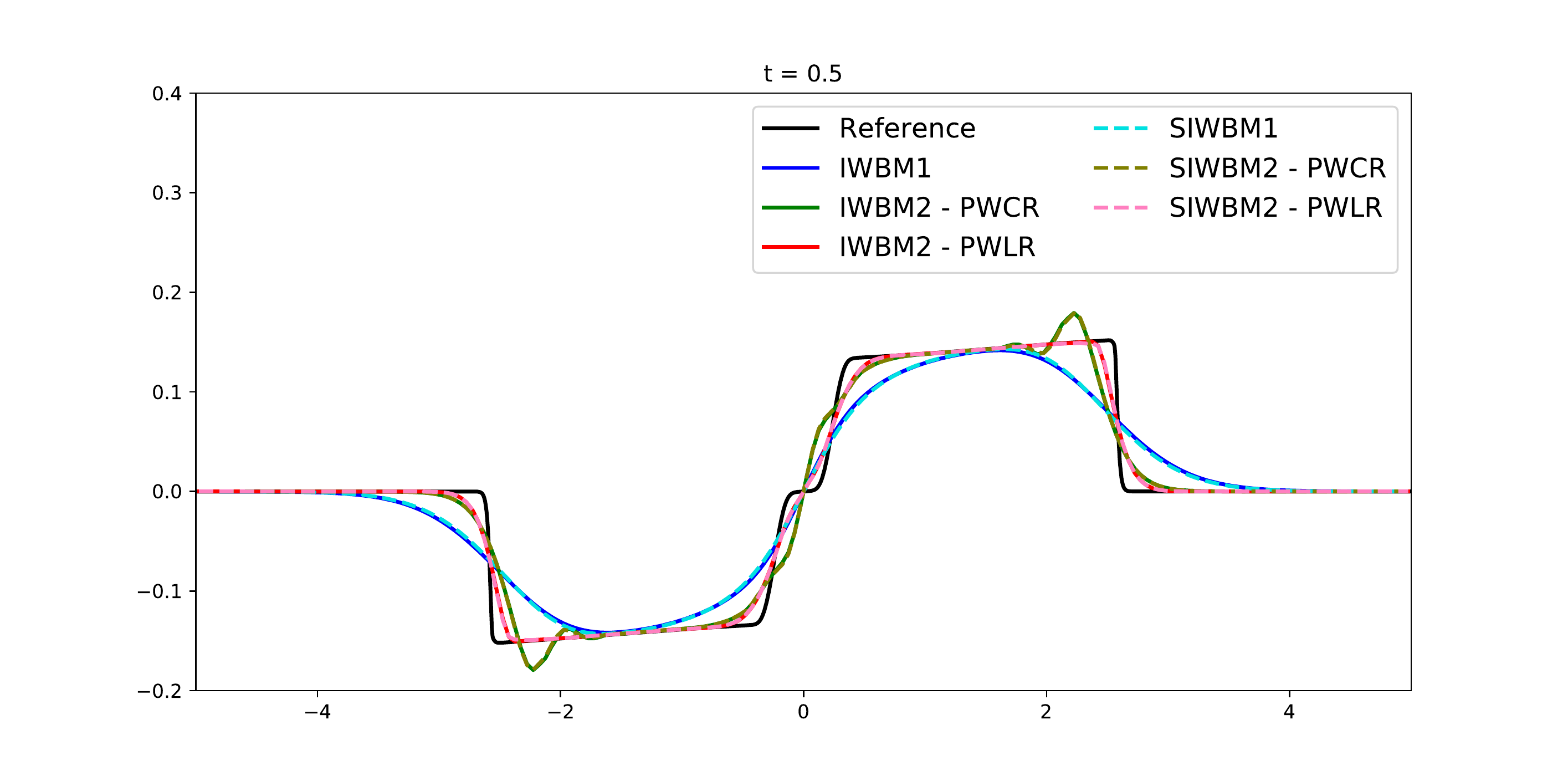}
     \caption{ Shallow water equations without friction: Test 3. Numerical solutions for IWBM$p$, SIWBM$p$, $p=1,2$ with CFL$=2$ at $t=0.5$. Top: $\eta$. Bottom: $q$.} 
     \label{sw_shock_t05}
  \end{center}
 \end{figure}

   Observe that when using a piecewise constant reconstruction $\widetilde{Q}_i(x)$, the numerical solution is less accurate and more oscillatory.

   
   \subsubsection{Test 4: convergence to a steady state}
   The goal now is to compare the ability of explicit and implicit schemes to reach a stationary solution as time increases.  We consider again the model without friction ($k=0$). Only the first-order methods EXWBM1 and IWBM1 are considered here.

   The space interval is $ [0,3]$ and the depth function is given again by \eqref{sw_fondo_subcrit}. The initial condition is $h(x,0)=2.0$ and $q(x,0)=0.0$. The  boundary conditions are the following
$$
q(0,t) = 1.0, \quad h(3,t) = 2.0.
$$
A 100-cell mesh is considered. The numerical solution is run until a stationary state is reached: the numerical method is stopped if 
\begin{equation}
    \displaystyle\frac{\max_i|U_i^{n+1}-U_i^n|}{\Delta t}<\varepsilon, 
\end{equation}
where $\varepsilon$ is a fixed threshold. In this test, we take $\varepsilon=$1e-12. Figure \ref{sw_converg} shows the stationary solution and Table  \ref{sw_converg_times} shows, for every numerical method, the time in seconds needed to reach a steady state, the difference in $L^1$-norm between the reached steady state and the subcritical stationary solution that  solves the problem
\begin{equation}\label{reached_ss}
\begin{cases}
q_x=0,\\
(\displaystyle - u^2 + gh) h_x= ghH_x,\\
h(3)=2.0, \, q(0)=1.0,\\
\end{cases}
\end{equation}
the CPU times, the total number of iterations in time associated to the time step $\Delta t$ and the total number of iterations of the fixed-point algorithm applied to solve the nonlinear problems are in the case of fully implicit schemes.
As expected, the implicit methods converge faster to the stationary solution. 
Notice that if the problem is just to capture the global stationary solution, then first order schemes may be perfectly adequate. However the interplay between time accuracy and the final stationary solution (in case the latter depends on the initial conditions as well and not only on the boundary conditions) is not obvious and it will be explored in future work.

\begin{figure}[ht]
 \begin{center}
   \includegraphics[width=0.65\textwidth]{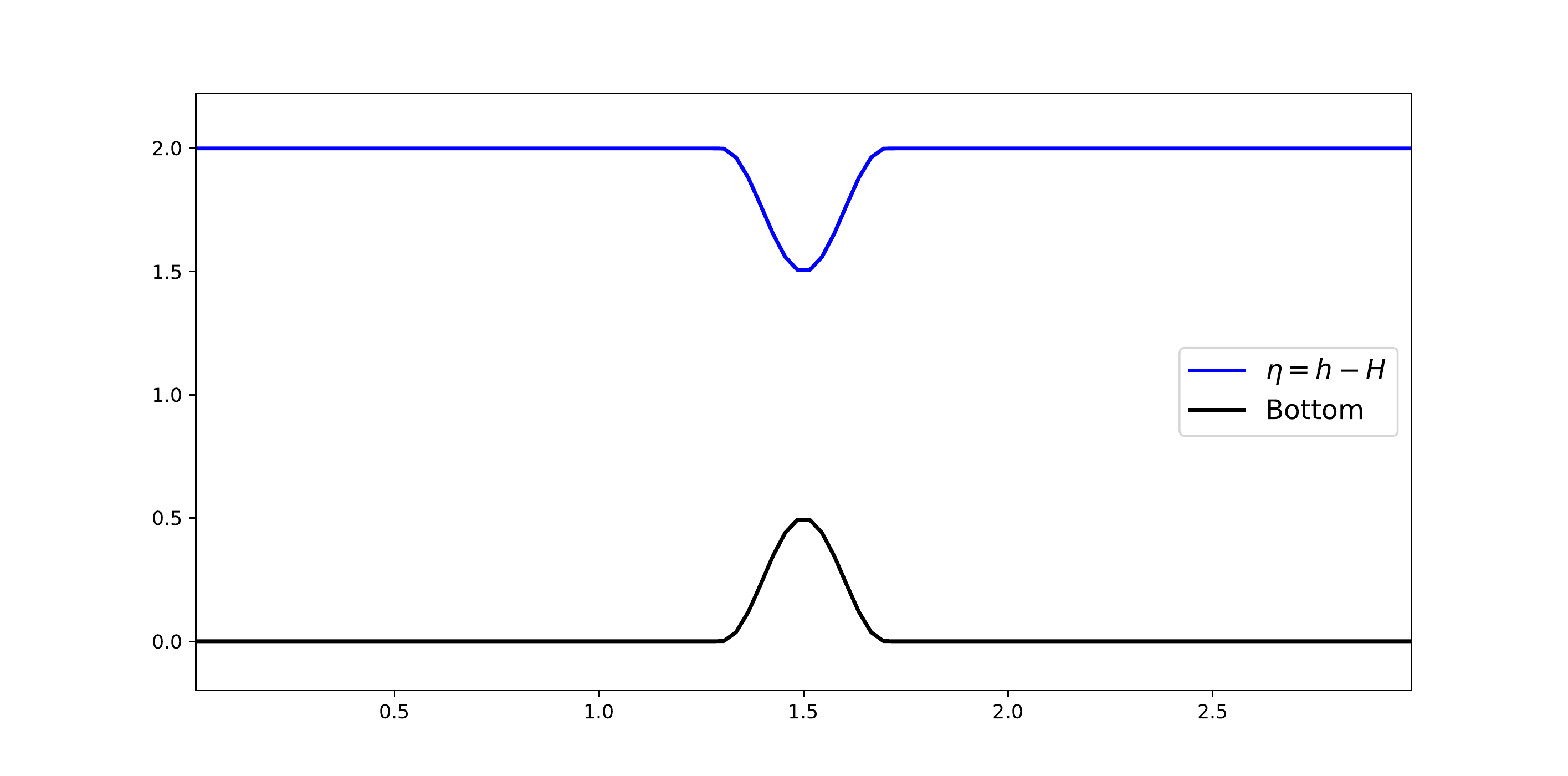}
    \includegraphics[width=0.65\textwidth]{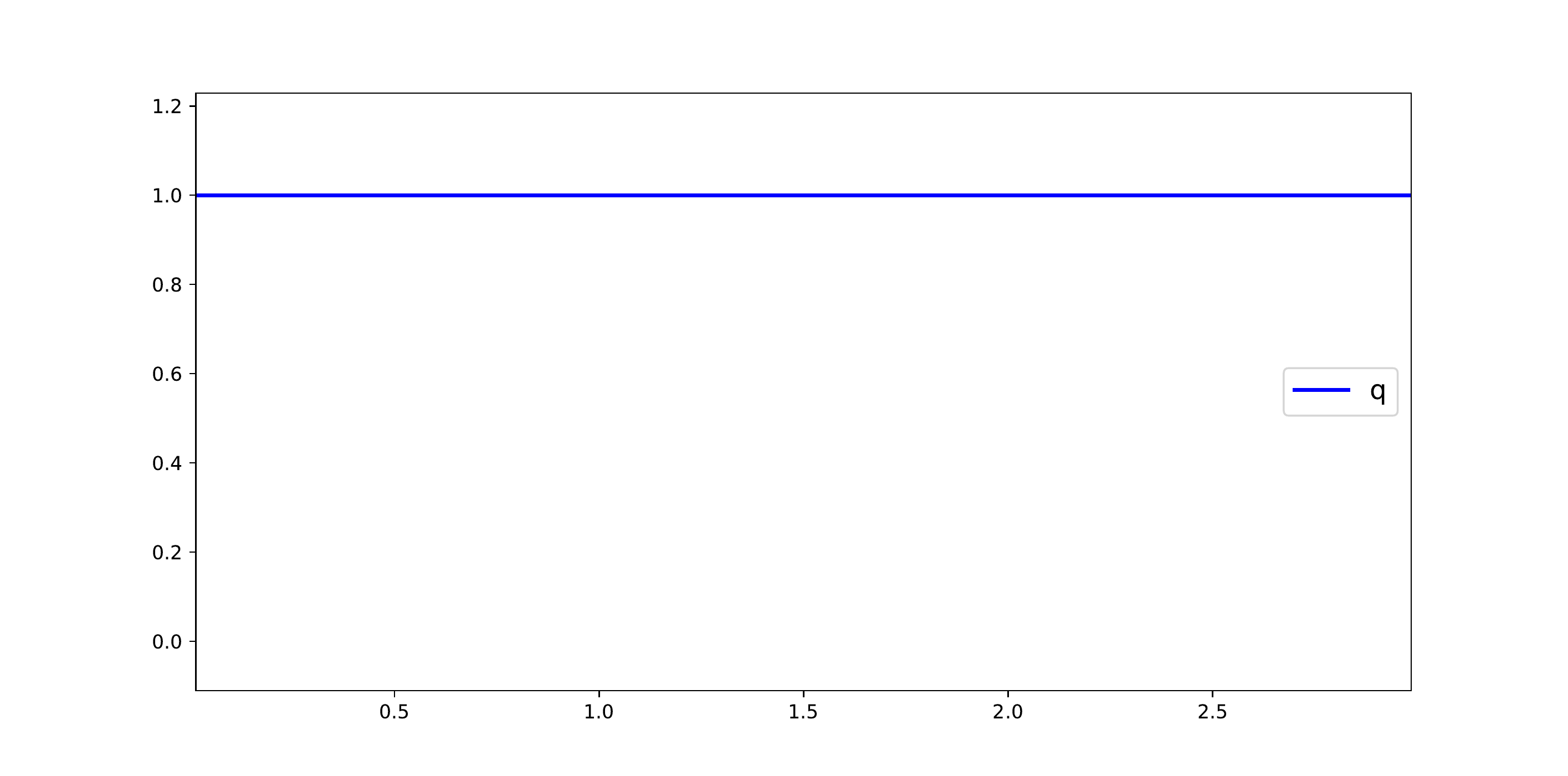}
     \caption{ Shallow water equations without friction: Test 4. Stationary solution.}
     \label{sw_converg}
  \end{center}
 \end{figure}

 \begin{table}[ht]
\centering
\begin{tabular}{|c|c|cc|c|c|c|} \hline
\multicolumn{7}{|c|}{\textbf{EXWBM1}} \\\hline
CFL & Times of  & \multicolumn{2}{|c|}{Errors of convergence}& CPU & Iterations & Fixed-point \\
& convergence & h & q & times & in time & iterations\\\hline

0.5 & 177.91 &2.55e-14 & 1.61e-13 & 80.90 & 52465 & - \\
0.99 & 159.34 &1.35e-13 & 1.29e-12 & 49.35 & 29586 & - \\
 \hline
\multicolumn{7}{|c|}{\textbf{IWBM1}} \\\hline
CFL & Times of  & \multicolumn{2}{|c|}{Errors of convergence}& CPU  & Iterations & Fixed-point \\
& convergence & h & q &times &in time & iterations \\\hline
2 & 129.51 & 1.96e-13 &1.65e-12 & 186.96 & 10660 & 97720\\
10 & 85.84 & 2.17e-13 & 1.83e-12 & 38.40& 1413& 36265 \\
20 &  64.04 &1.55e-13 &1.17e-12 &21.57& 527&22606\\
50 & 41.64 &4.42e-14 & 1.62e-12 & 20.60& 138 &26194\\\hline
\end{tabular}
 \caption{ Shallow water equations without friction: Test 4. Time needed to reach a steady state, differences in $L^1$-norm between the reached steady state and the subcritical stationary solution which solves the problem \eqref{reached_ss}, CPU times, total number of iterations in time associated to the time step $\Delta t$ and  total number of iterations of the fixed-point algorithm applied to solve the nonlinear problems are in the case of fully implicit schemes.} 
\label{sw_converg_times}
\end{table}

Notice that, for CFL=50, in spite of the fact that the total number of iterations of the fixed-point algorithm is bigger than in the case of CFL=20, less computational effort is required since the total number of well-balanced reconstructions is smaller.

\subsubsection{Test 5: stationary solution for the model with friction}
Let us check the well-balanced property of the methods for the model with friction. In this test, the Manning friction is $k=0.01$ and the space interval is $[0,1]$. The depth function is given by 
\begin{equation}\label{swf_fondo}
H(x)= 1 - \displaystyle \frac{1}{2} \frac{e^{\cos(4 \pi x)} -e^{-1}}{e - e^{-1}}.
\end{equation}
We consider a supercritical stationary  solution: the solution of  \eqref{u'=Gswf} with initial conditions $q(0)=3$ and $h(0)=0.3$ (see Figure \ref{swf_sssup_cini}), obtained numerically by solving system \eqref{u'=Gswf} using the mid-point rule (see \cite{gomez2021collocation}). We consider a 100-cell mesh  and the final time is $t=1$. Notice that, at variance with the low Froude number stationary solutions, in the supercritical case the profile of the free surface is almost parallel to the bottom profile, so that $h(x)$ is almost constant.
Table \ref{swF_wbcheck_errors} shows the $L^1$-errors between the initial and the approximated cell-averages at time $t = 1$ given by SIWBM1, IWBM1, SIWBM2 and IWBM2 with piecewise contant (PWCR) or piecewise linear (PWLR)  reconstruction $\widetilde Q_i$.
\begin{figure}[ht]
 \begin{center}
   \includegraphics[width=0.8\textwidth]{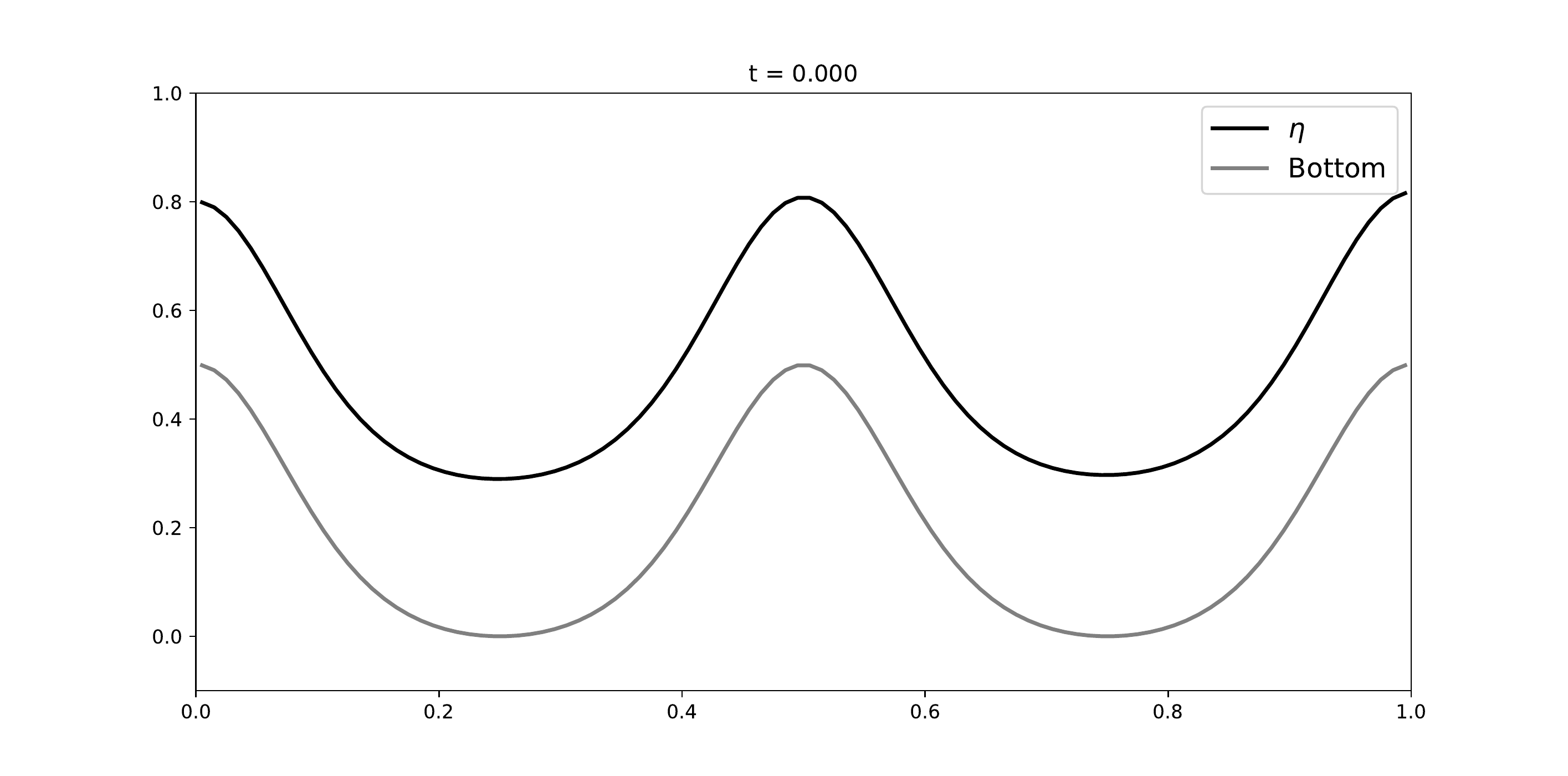}
     \caption{ Shallow water equations with friction: Test 5. Initial condition: supercritical stationary solution. Free surface and bottom.} 
     \label{swf_sssup_cini}
  \end{center}
 \end{figure}

 \begin{table}[ht]
\centering
\begin{tabular}{|cc|cccc|} \hline\multicolumn{6}{|c|}{\textbf{Implicit methods}}\\\hline
\multicolumn{2}{|c|}{IWBM1} & \multicolumn{4}{c|}{IWBM2}  \\
\multicolumn{2}{|c|} {  }&   \multicolumn{2}{c}{PWCR} & \multicolumn{2}{c|}{PWLR} \\ \hline
h& q & h & q &h & q \\\hline
6.11e-16 & 8.88e-16 & 9.44e-16  & 9.36e-15 & 6.66e-16 & 6.22e-15 \\\hline
 \multicolumn{6}{c}{}\\
 \multicolumn{6}{c}{}\\\hline
\multicolumn{6}{|c|}{\textbf{Semi-implicit methods}}\\\hline
\multicolumn{2}{|c|}{SIWBM1} & \multicolumn{4}{c|}{SIWBM2}  \\
\multicolumn{2}{|c|} {  }&   \multicolumn{2}{c}{PWCR} & \multicolumn{2}{c|}{PWLR} \\ \hline
h& q & h & q &h & q \\\hline
7.21e-16 & 6.66e-15 & 9.44e-16  & 9.76e-15 & 8.33e-16 & 6.21e-15 \\\hline
\end{tabular}
 \caption{ Shallow water equations with friction: Test 5. Differences in $L^1$-norm between the stationary and the numerical solution  at $t=1$ for IWBM1, SIWBM1, IWBM2 and SIWBM2 with piecewise contant (PWCR) or piecewise linear (PWL) reconstruction $\widetilde{Q}_i$ for a 100-cell mesh.
} 
\label{swF_wbcheck_errors}
\end{table}

\subsubsection{Test 6: perturbation of a stationary solution for the model with friction}
In this test, the Manning friction is again $k=0.01$. The depth function is given by \eqref{swf_fondo}. We consider a perturbation of the supercritical stationary  solution: the initial condition $U_0(x)=[h_0(x),q_0(x)]^T$ given by 
$$
h_0(x)=\begin{cases}
h^*(x)+0.05,
& \mbox{if $x \in \left[ \displaystyle \frac{2}{7} , \displaystyle \frac{3}{7} \right] \cup \left[ \displaystyle \frac{4}{7} , \displaystyle \frac{5}{7} \right]$,}\\
h^*(x), & \mbox{otherwise,}
\end{cases}
$$
$$
q_0(x)=\begin{cases}
q^*(x)+0.5,
& \mbox{if $x \in \left[ \displaystyle \frac{2}{7} , \displaystyle \frac{3}{7} \right] \cup \left[ \displaystyle \frac{4}{7} , \displaystyle \frac{5}{7} \right]$,}\\
q^*(x), & \mbox{otherwise,}
\end{cases}
$$ 
where $U^*(x)=[h^*(x),q^*(x)]^T$ is the stationary solution considered in Test 5 (see Figure \ref{swf_pertsup_cini}). We consider a 100-cell mesh  and the numerical simulation  is  run until  $t = 2$. A reference solution computed with a 800-cell mesh using SIWBM2 with piecewise constant reconstruction $\widetilde Q_i$ has been considered.  Figures \ref{swf_pertsuper_imex_001}-\ref{swf_pertsuper_imex_005} show the evolution of the perturbation at times $t=0.01$ and $t=0.05$ for SIWBM1 and SIWBM2 with piecewise contant (PWCR) or piecewise linear (PWLR) reconstruction $\widetilde{Q}_i$ and Table \ref{swF_wbcheck_errors_imex} shows the differences in $L^1$-norm between the stationary and the numerical solutions  at $t=2$. As expected, the stationary solution is recovered with machine precision.
\begin{figure}[ht]
 \begin{center}
   \includegraphics[width=0.65\textwidth]{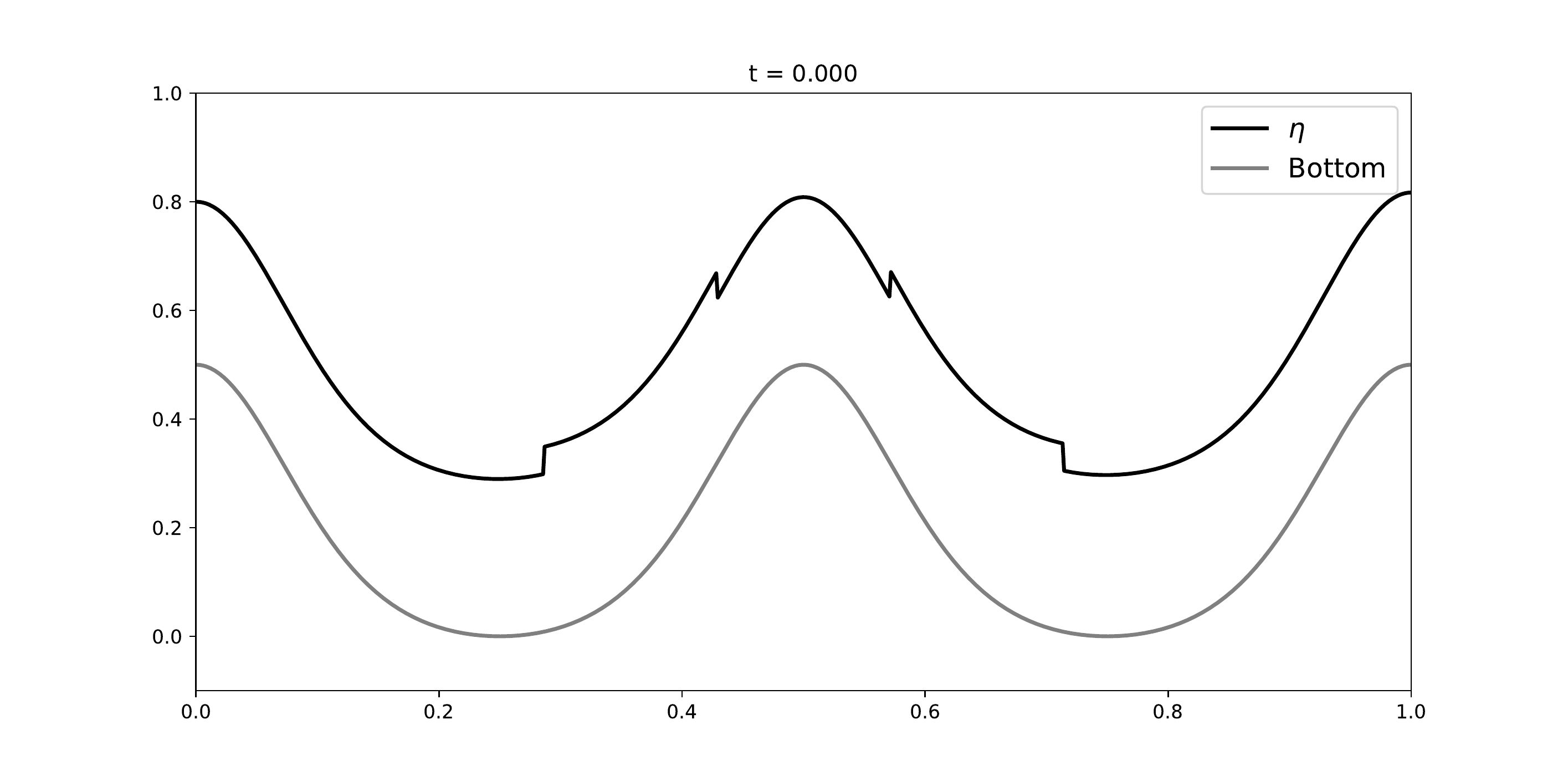}
   \includegraphics[width=0.65\textwidth]{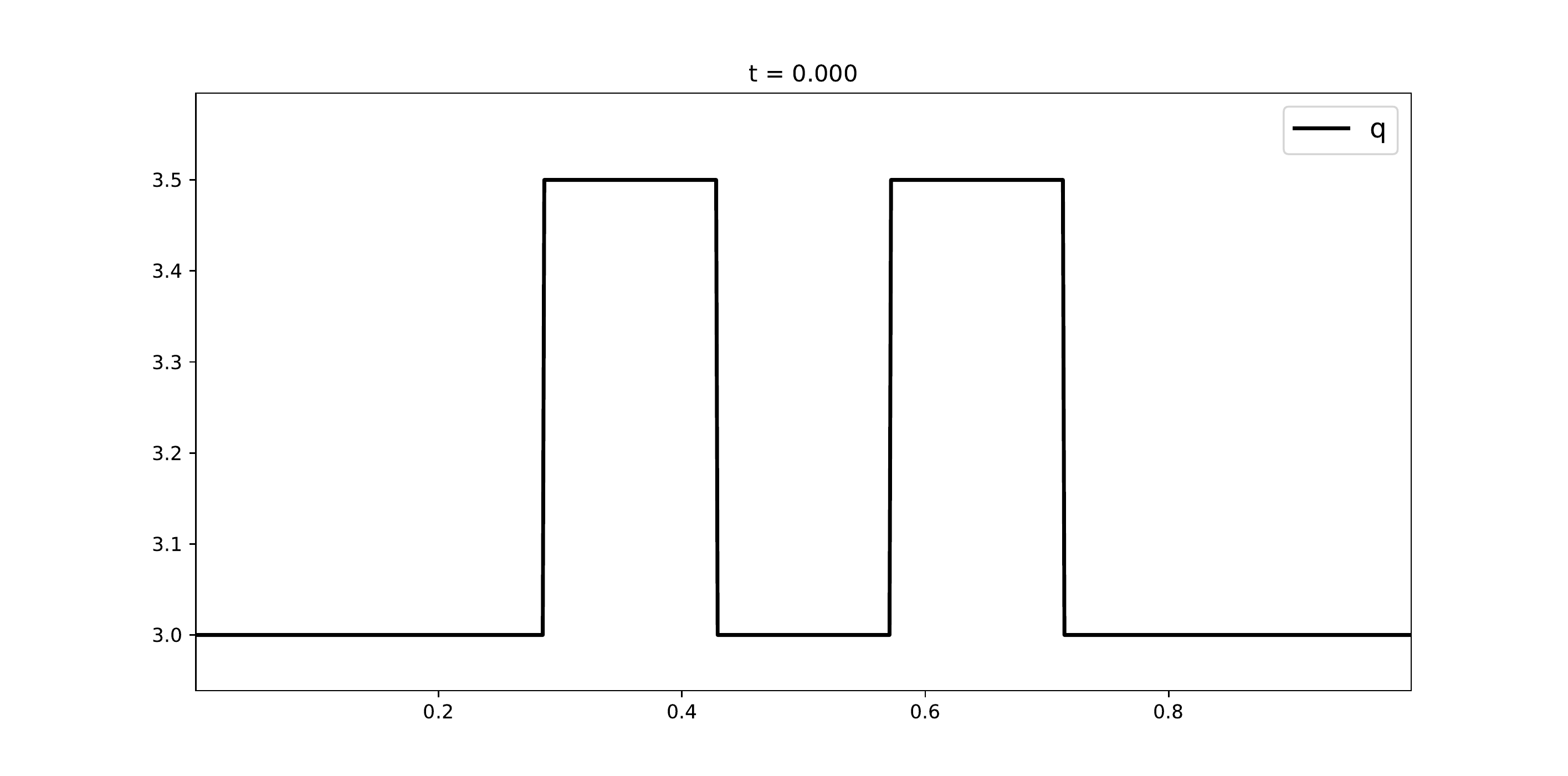}
     \caption{ Shallow water equations with friction: Test 6. Initial condition: perturbation of a supercritical stationary solution. Top: $\eta$. Bottom: $q$.} 
     \label{swf_pertsup_cini}
  \end{center}
 \end{figure}
 \begin{figure}[ht]
 \begin{center}
   \includegraphics[width=0.65\textwidth]{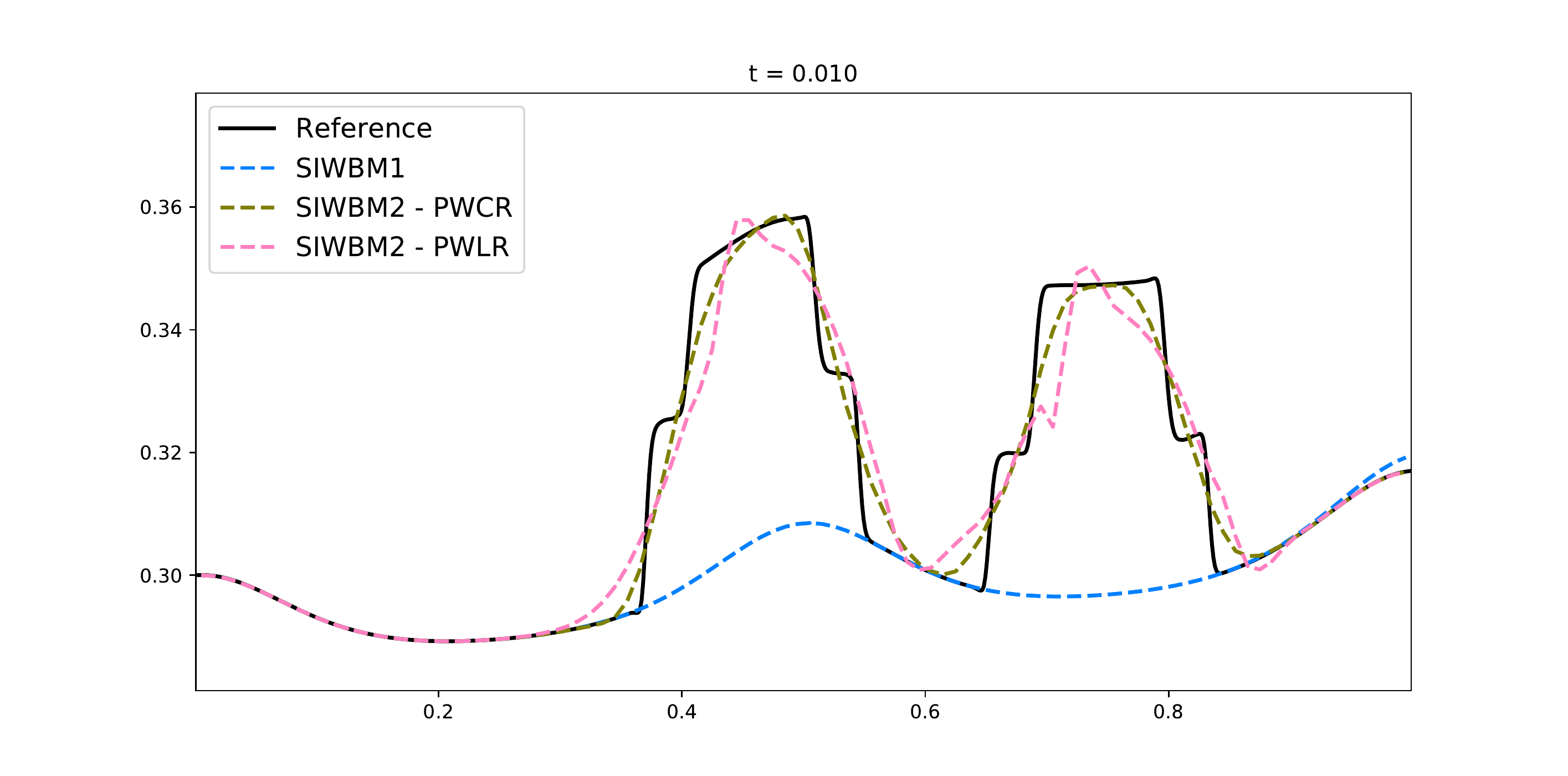}
   \includegraphics[width=0.65\textwidth]{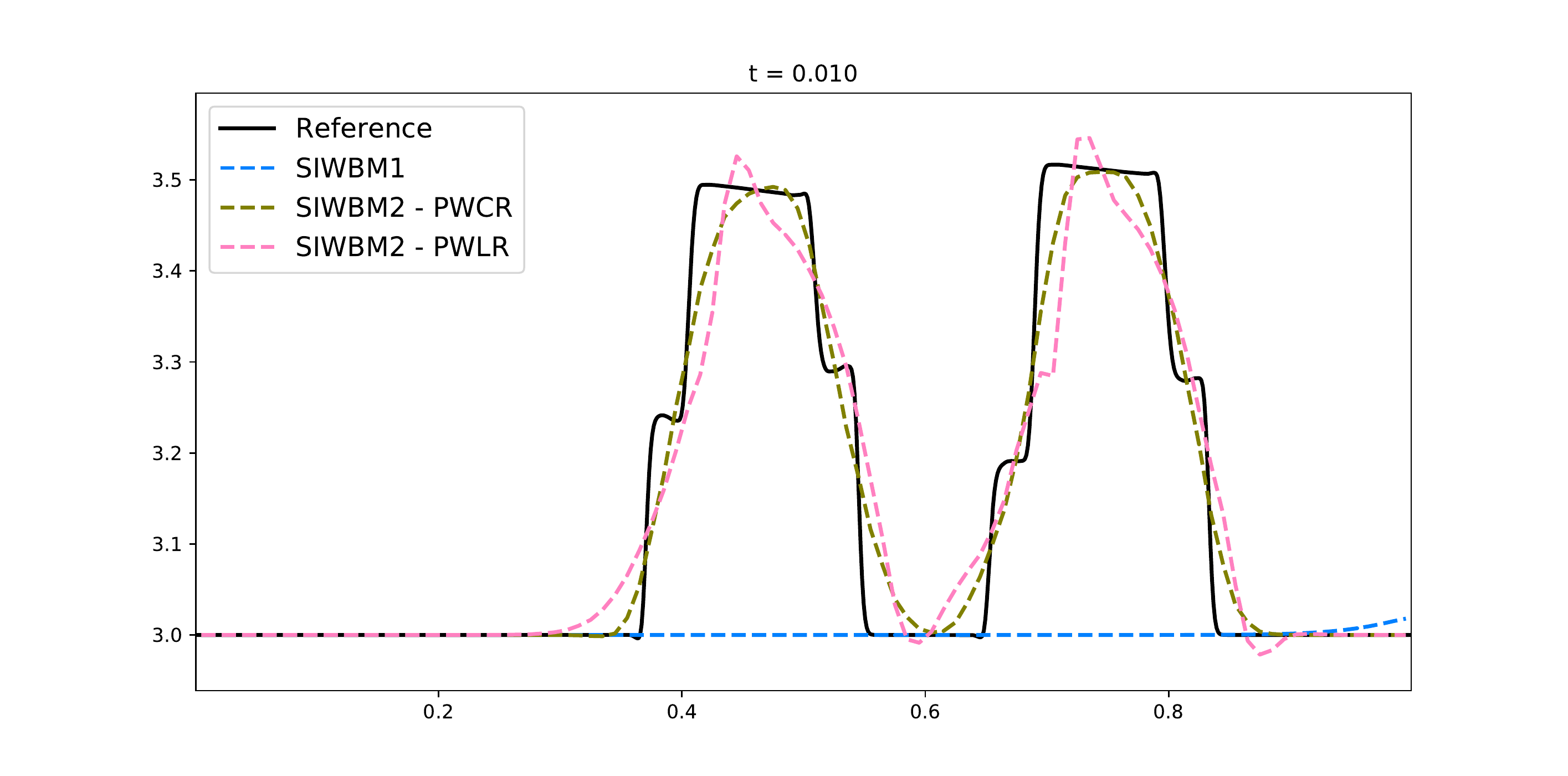}
     \caption{ Shallow water equations with friction: Test 6. Numerical solutions for SIWBM$p$, $p=1,2$ with CFL$=0.9$ at $t=0.01$. Top: $h$. Bottom: $q$.
     } 
     \label{swf_pertsuper_imex_001}
  \end{center}
 \end{figure}
 \begin{figure}[ht]
 \begin{center}
   \includegraphics[width=0.65\textwidth]{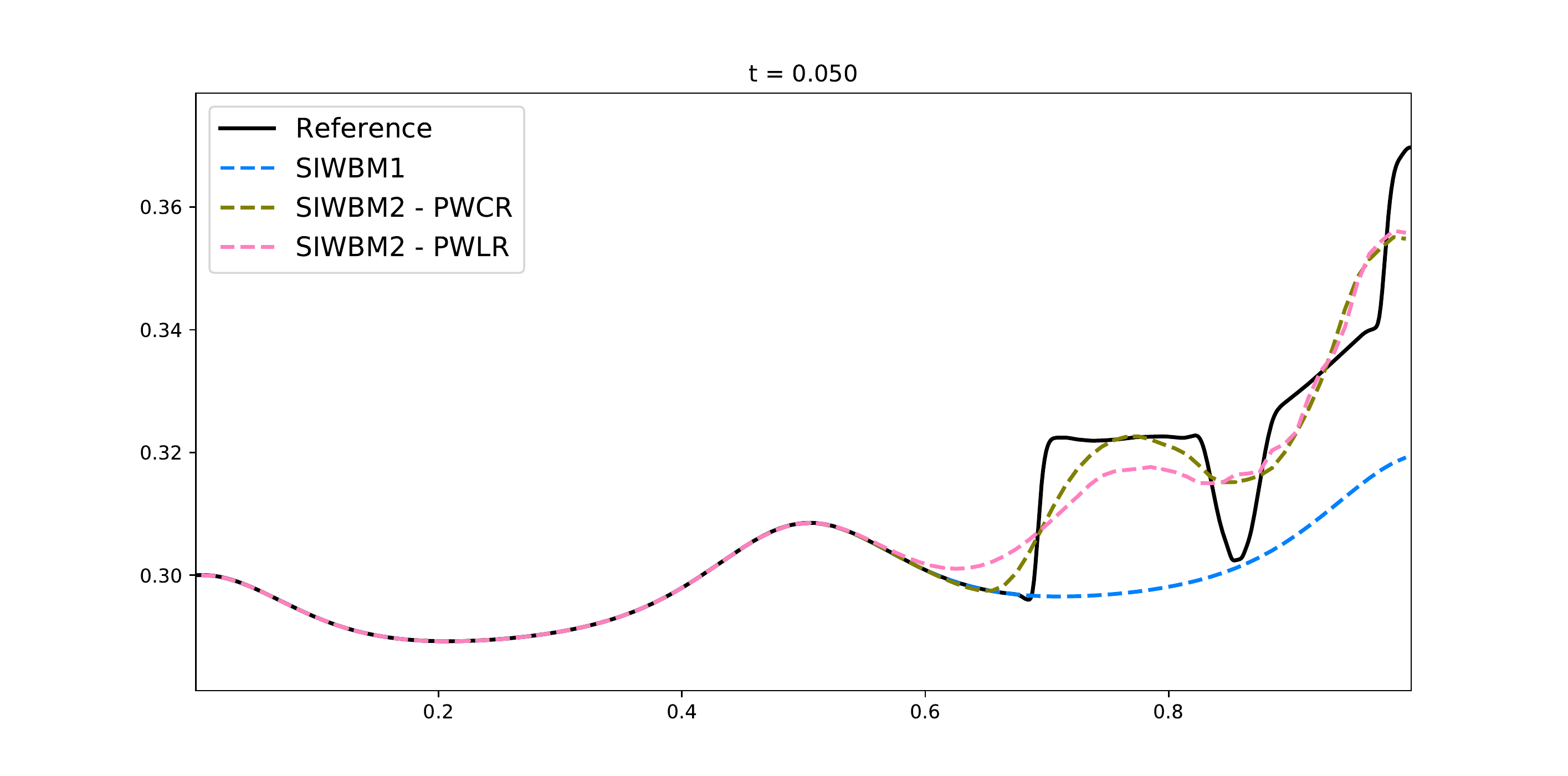}
   \includegraphics[width=0.65\textwidth]{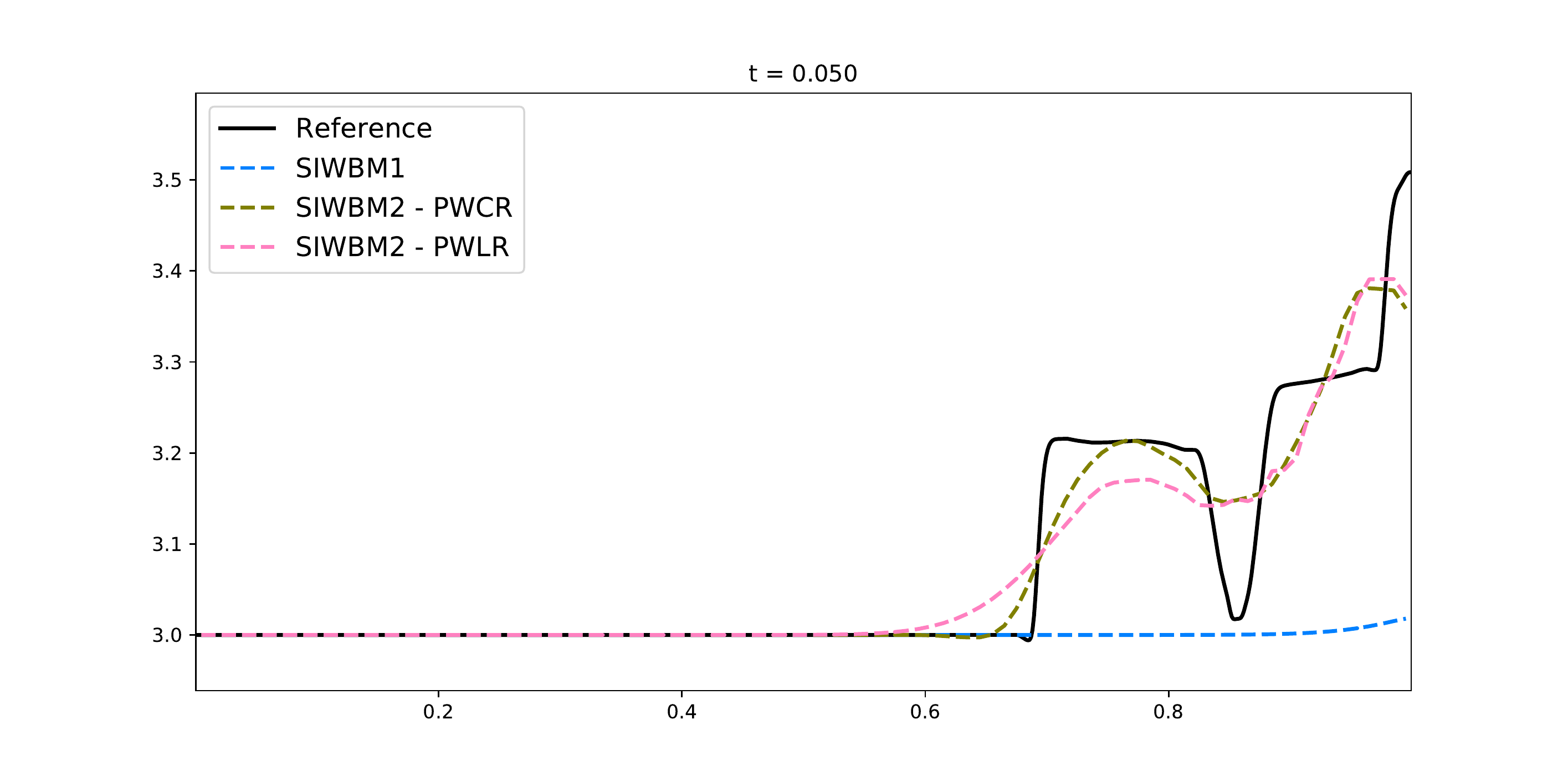}
     \caption{ Shallow water equations with friction: Test 6. Numerical solutions for SIWBM$p$, $p=1,2$ with CFL$=0.9$ at $t=0.05$. Top: $h$. Bottom: $q$.} 
     \label{swf_pertsuper_imex_005}
  \end{center}
 \end{figure}
  \begin{table}[ht]
\centering
\begin{tabular}{|cc|cccc|} 
\hline
\multicolumn{6}{|c|}{\textbf{Semi-implicit methods}}\\\hline
\multicolumn{2}{|c|}{SIWBM1} & \multicolumn{4}{c|}{SIWBM2}  \\
\multicolumn{2}{|c|} {  }&   \multicolumn{2}{c}{PWCR} & \multicolumn{2}{c|}{PWLR} \\ \hline
h& q & h & q &h & q \\\hline
9.99e-16 & 1.15e-14 & 4.44e-16  & 1.33e-15 & 6.10e-16 & 5.32e-15 \\\hline
\end{tabular}
 \caption{ Shallow water equations with friction: Test 6. Differences in $L^1$-norm between the stationary and the numerical solution  at $t=2$ for SIWBM1 and SIWBM2 with piecewise contant (PWCR) or piecewise linear (PWLR) reconstruction $\widetilde{Q}_i$ for a 100-cell mesh.} 
\label{swF_wbcheck_errors_imex}
\end{table}

Fully implicit schemes have been also considered with CFL=2, recovering with machine precision the supercritical stationary solution at the final time $t=2s$ (see Table \ref{swF_wbcheck_errors_imp}, where the errors $L^1$-norm between the stationary and the numerical solutions  at $t=2$ for IWBM1 and IWBM2 with piecewise contant (PWCR) or piecewise linear (PWLR) reconstruction $\widetilde{Q}_i$  are shown).

  \begin{table}[ht]
\centering
\begin{tabular}{|cc|cccc|} 
\hline
\multicolumn{6}{|c|}{\textbf{Implicit methods}}\\\hline
\multicolumn{2}{|c|}{IWBM1} & \multicolumn{4}{c|}{IWBM2}  \\
\multicolumn{2}{|c|} {  }&   \multicolumn{2}{c}{PWCR} & \multicolumn{2}{c|}{PWLR} \\ \hline
h& q & h & q &h & q \\\hline
5.00e-16 & 4.41e-16 & 1.50e-15  & 1.51e-14 & 8.33e-16 & 7.55e-15 \\\hline
\end{tabular}
 \caption{ Shallow water equations with friction: Test 6. Differences in $L^1$-norm between the stationary and the numerical solution  at $t=2$ for IWBM1 and IWBM2 with piecewise contant (PWCR) or piecewise linear (PWLR) reconstruction $\widetilde{Q}_i$ for a 100-cell mesh.} 
\label{swF_wbcheck_errors_imp}
\end{table}

\clearpage
\section{Conclusions and forthcoming work}
Following some previous work of the authors \cite{sinum2008, CastroPares2019, GomezCastroPares2020, gomez2021well, gomez2021collocation}, we have developed a general procedure to design high-order implicit and semi-implicit numerical schemes for any one-dimensional system of  balance laws.  Note that the main ingredient of these methods is a well-balanced reconstruction operator. A general result proving the well-balanced property of these numerical methods is  stated. We checked the new formulation with several numerical tests, considering scalar problems such as the linear transport equation and the Burgers equation, and more complex systems such as shallow water in presence of variable bathymetry and  Manning friction. 
Notice that, when both the flux and the source term of \eqref{PDE_generalproblem} are (equally) stiff, the system may relax to a stationary solution of the ODE system \eqref{ODE_stationarysolutions}
in a very short time. If one is interested in efficiently capturing the stationary solution, then it is advisable to adopt an implicit (or semi-implicit) scheme which is at the same time well-balanced. This is shown in a numerical test for the shallow water model.

Future work will include applications to more general systems whose source contains a stiff relaxation and a non-stiff term, i.e.\ systems of the form 
\begin{equation}
u_t + f(u)_x = \frac{1}{\epsilon}S(u) + G(u,x),
\label{balanceG}
\end{equation}
that in the limit of vanishing $\epsilon$ relaxes to a lower dimensional system of balance laws of the form 
\begin{equation}
v_t + \bm{f}(v)_x = \bm{g}(v,x),
\label{balanceg}
\end{equation}
where where $v(x,t) = Qu(x,t)\in \mathbb{R}^n$, $n<N$, $Q\in\mathbb{R}^{n\times N}$, $QS(u) = 0$, and  $\bm{f} = Qf(E(v))$, with $u = E(v)$, $\bm{g}(u,x) = QG(E(u),x)$. In such cases the limit equation admits non-trivial stationary solutions which must be accurately approximated. Our aim will be to design numerical schemes for systems \eqref{balanceG} which become consistent and  well-balanced schemes for systems \eqref{balanceg} as the relaxation parameter vanishes, which are said to be {\em Asymptotic Preserving and Well-Balanced (APWB)\/} (see \cite{jin1995relaxation, jin2010asymptotic, chen1994hyperbolic,pareschi2005implicit}). A natural framework to define such numerical schemes is the combination of well-balanced finite-volume schemes and IMEX methods. 


A second important extension concerns the application of this framework to problems in more space dimensions, witht he specific goal to capture non trivial stationary solutions of systems of balance laws, along the lines of the pioneering work of Moretti and Abbett \cite{moretti1966time}, who captured the stationary flow around a blunt body by looking for the stationary solutions of a time dependent problem.

\section*{Acknoledgements}{
We would like to thank  M. L\'opez-Fern\'andez for the linear stability analysis performed in Test 2 for the transport equation.
This work is partially supported by projects RTI2018-096064-B-C21 funded by  MCIN/AEI/10.13039/501100011033 and  ``ERDF A way of making Europe'',  projects P18-RT-3163 of Junta de Andaluc\'ia  and UMA18-FEDERJA-161 of Junta de Andaluc\'ia-FEDER-University of M\'alaga. 
 G. Russo and S.Boscarino acknowledge partial support from  the Italian Ministry of University and Research (MIUR), PRIN Project 2017 (No. 2017KKJP4X entitled “Innovative numerical methods for evolutionary partial differential equations and applications”. 
 I. Gómez-Bueno is also supported by a Grant from “El Ministerio de Ciencia, Innovación y Universidades”, Spain (FPU2019/01541) funded by 
 MCIN/AEI/10.13039/501100011033 and ``ESF Investing in your future''.
}

\bibliographystyle{plain} 
\bibliography{references}


\end{document}